\documentclass[11pt,a4paper]{article}

\usepackage{titlesec}
\usepackage{fancyhdr}
\usepackage{a4wide}
\usepackage{graphicx}
\usepackage{float}
\usepackage{amssymb}
\usepackage{amsmath}
\usepackage{amsfonts}
\usepackage{dsfont}
\usepackage{amsthm}
\usepackage{color}
\usepackage{mathrsfs}
\usepackage{array}
\usepackage{eucal}
\usepackage{tikz}
\usepackage[T1]{fontenc}
\usepackage{inputenc}
\usepackage[english]{babel}
\usepackage{lmodern}
\usepackage{hyperref}
\usepackage{geometry}
\usepackage{changepage}
\usepackage{bm}
\geometry{hmargin=2.3cm, vmargin=2.3cm }
\changepage{0pt}{}{}{}{}{0pt}{}{0pt}{6pt}
\usepackage[numbers]{natbib}
\setlength{\bibsep}{0.0pt}
\usepackage{bbm}
\usepackage{pgfplots}

\usepackage{hyperref}
\hypersetup{
pdfpagemode=none,
pdftoolbar=true,        
pdfmenubar=true,        
pdffitwindow=false,     
pdfstartview={Fit},    
pdftitle={On the breathing of spectral bands in periodic quantum waveguides with inflating resonators},    
pdfauthor={L. Chesnel, S.A. Nazarov},     
pdfsubject={},  
pdfcreator={L. Chesnel, S.A. Nazarov},   
pdfproducer={}, 
pdfkeywords={}, 
pdfnewwindow=true,      
colorlinks=true,       
linkcolor=magenta,          
citecolor=red,        
filecolor=cyan,      
urlcolor=blue           
}

\newcommand{\dsp}{\displaystyle}

\newcommand{\eps}{\varepsilon}
\newcommand{\om}{\omega}
\newcommand{\Om}{\Omega}
\newcommand{\mrm}[1]{\mathrm{#1}}

\newcommand{\Cplx}{\mathbb{C}}
\newcommand{\N}{\mathbb{N}}
\newcommand{\R}{\mathbb{R}}
\newcommand{\Z}{\mathbb{Z}}

\newcommand{\mL}{\mrm{L}}
\newcommand{\mH}{\mrm{H}}

\newcommand{\mX}{\mrm{X}}

\renewcommand{\ker}{\mrm{ker}}

\renewcommand{\dim}{\mrm{dim}}

\newtheorem{theorem}{Theorem}[section]
\newtheorem{lemma}[theorem]{Lemma}
\newtheorem{remark}[theorem]{Remark}
\newtheorem{definition}[theorem]{Definition}

\newtheorem{proposition}[theorem]{Proposition}

\begin{document}

~\vspace{-0.6cm}
\begin{center}

{\sc \bf\fontsize{20}{20}\selectfont 
On the breathing of spectral bands in periodic \\[8pt]
quantum waveguides with inflating resonators}
\end{center}

\begin{center}
\textsc{Lucas Chesnel}$^1$, \textsc{Sergei A. Nazarov}$^{2}$\\[16pt]
\begin{minipage}{0.95\textwidth}
{\small
$^1$ Inria, Ensta Paris, Institut Polytechnique de Paris, 828 Boulevard des Mar\'echaux, 91762 Palaiseau, France;\\
$^2$ Institute of Problems of Mechanical Engineering RAS, V.O., Bolshoi pr., 61, St. Petersburg, 199178, Russia;\\
E-mails: \texttt{lucas.chesnel@inria.fr}, \texttt{srgnazarov@yahoo.co.uk} \\[-14pt]
\begin{center}
(\today)
\end{center}
}
\end{minipage}

\vspace{0.2cm}

\begin{minipage}{0.95\textwidth}
\noindent\textbf{Abstract.} We are interested in the lower part of the spectrum of the Dirichlet Laplacian $A^\eps$ in a thin waveguide $\Pi^\eps$ obtained by repeating periodically a pattern, itself constructed by scaling an inner field geometry $\Om$ by a small factor $\eps>0$. The Floquet-Bloch theory ensures that the spectrum of $A^\eps$ has a band-gap structure. Due to the Dirichlet boundary conditions, these bands all move to $+\infty$ as $O(\eps^{-2})$ when $\eps\to0^+$. Concerning their widths, applying techniques of dimension reduction, we show that the results depend on the dimension of the so-called space of almost standing waves in $\Om$ that we denote by $\mX_\dagger$. Generically, i.e. for most $\Om$, there holds $\mX_\dagger=\{0\}$ and the lower part of the spectrum of $A^\eps$ is very sparse, made of bands of length at most $O(\eps)$ as $\eps\to0^+$. For certain $\Om$ however, we have $\dim\,\mX_\dagger=1$ and then there are bands of length $O(1)$ which allow for wave propagation in $\Pi^\eps$. The main originality of this work lies in the study of the behaviour of the spectral bands when perturbing $\Om$ around a particular $\Om_\star$ where $\dim\,\mX_\dagger=1$. We show a breathing phenomenon for the spectrum of $A^\eps$: when inflating $\Om$ around $\Om_\star$, the spectral bands rapidly expand before shrinking. In the process, a band dives below the normalized threshold $\pi^2/\eps^2$, stops breathing and becomes extremely short as $\Om$ continues to inflate.\\[4pt]
\noindent\textbf{Key words.} Quantum waveguide, thin periodic lattice, threshold scattering matrix, threshold resonance, spectral bands. 
\end{minipage}
\end{center}

\section{Introduction}

\begin{figure}[!ht]
\centering
\begin{tikzpicture}[scale=1]
\path[draw=black,fill=gray!30,smooth,rounded corners=0mm](0,-0.3)--(0.9,0.2)--(1,0.9)--(-0.7,1.4)--(-1.1,-0.1)--cycle;
\draw[fill=gray!30,draw=none](-2,-1/2) rectangle (2,1/2);
\draw[black] (-2,-1/2)--(2,-1/2);
\draw[black] (-2,1/2)--(-0.94,1/2);
\draw[black] (2,1/2)--(0.92,1/2);
\draw[black,<->] (1.8+0.6,-1/2)--(1.8+0.6,1/2);
\node at (1.8+0.8,0) {\small $1 $};
\node at (0,0) {\small $\Om $};
\end{tikzpicture}\qquad\qquad
\begin{tikzpicture}[scale=0.6]
\path[draw=black,fill=gray!30,smooth,rounded corners=0mm](0,-0.3)--(0.9,0.2)--(1,0.9)--(-0.7,1.4)--(-1.1,-0.1)--cycle;
\draw[fill=gray!30,draw=none](-5,-1/2) rectangle (5,1/2);
\draw[black] (-0.95,1/2)--(-5,1/2)--(-5,-1/2)--(5,-1/2)--(5,1/2)--(0.92,1/2);
\draw[black,<->] (3,-1/2)--(3,1/2);
\draw[black,<->] (-5,-0.8)--(5,-0.8);
\node at (0,-1.4) {\small $1$};
\draw[black,line width=0.7mm] (-5,-1/2)--(-5,1/2);
\draw[black,line width=0.7mm] (5,-1/2)--(5,1/2);
\node at (5+0.8,-0.1) {\small $\partial\om^\eps_+$};
\node at (-5-0.8,-0.1) {\small $\partial\om^\eps_-$};
\node at (3+0.4,0) {\small $\eps $};
\node at (0,0) {\small $\om^\eps $};
\end{tikzpicture}\\[20pt]
\begin{tikzpicture}[scale=0.22]
\path[draw=black,fill=gray!30,smooth,rounded corners=0mm](0,-0.3)--(0.9,0.2)--(1,0.9)--(-0.7,1.4)--(-1.1,-0.1)--cycle;
\draw[fill=gray!30,draw=none](-5,-1/2) rectangle (5,1/2);
\draw[black] (-0.95,1/2)--(-5,1/2);
\draw[black] (-5,-1/2)--(5,-1/2);
\draw[black] (5,1/2)--(0.92,1/2);
\begin{scope}[xshift=19.95cm]
\path[draw=black,fill=gray!30,smooth,rounded corners=0mm](0,-0.3)--(0.9,0.2)--(1,0.9)--(-0.7,1.4)--(-1.1,-0.1)--cycle;
\draw[fill=gray!30,draw=none](-5,-1/2) rectangle (5,1/2);
\draw[black] (-0.95,1/2)--(-5,1/2);
\draw[black] (-5,-1/2)--(5,-1/2);
\draw[black] (5,1/2)--(0.92,1/2);
\node at (7,0) {\small $\Pi^\eps $};
\end{scope}
\begin{scope}[xshift=9.95cm]
\path[draw=black,fill=gray!30,smooth,rounded corners=0mm](0,-0.3)--(0.9,0.2)--(1,0.9)--(-0.7,1.4)--(-1.1,-0.1)--cycle;
\draw[fill=gray!30,draw=none](-5,-1/2) rectangle (5,1/2);
\draw[black] (-0.95,1/2)--(-5,1/2);
\draw[black] (-5,-1/2)--(5,-1/2);
\draw[black] (5,1/2)--(0.92,1/2);
\end{scope}
\begin{scope}[xshift=-9.95cm]
\path[draw=black,fill=gray!30,smooth,rounded corners=0mm](0,-0.3)--(0.9,0.2)--(1,0.9)--(-0.7,1.4)--(-1.1,-0.1)--cycle;
\draw[fill=gray!30,draw=none](-5,-1/2) rectangle (5,1/2);
\draw[black] (-0.95,1/2)--(-5,1/2);
\draw[black] (-5,-1/2)--(5,-1/2);
\draw[black] (5,1/2)--(0.92,1/2);
\end{scope}
\begin{scope}[xshift=-19.95cm]
\path[draw=black,fill=gray!30,smooth,rounded corners=0mm](0,-0.3)--(0.9,0.2)--(1,0.9)--(-0.7,1.4)--(-1.1,-0.1)--cycle;
\draw[fill=gray!30,draw=none](-5,-1/2) rectangle (5,1/2);
\draw[black] (-0.95,1/2)--(-5,1/2);
\draw[black] (-5,-1/2)--(5,-1/2);
\draw[black] (5,1/2)--(0.92,1/2);
\end{scope}
\begin{scope}[xshift=-29.95cm]
\path[draw=black,fill=gray!30,smooth,rounded corners=0mm](0,-0.3)--(0.9,0.2)--(1,0.9)--(-0.7,1.4)--(-1.1,-0.1)--cycle;
\draw[fill=gray!30,draw=none](-5,-1/2) rectangle (5,1/2);
\draw[black] (-0.95,1/2)--(-5,1/2);
\draw[black] (-5,-1/2)--(5,-1/2);
\draw[black] (5,1/2)--(0.92,1/2);
\end{scope}
\end{tikzpicture}
\caption{Geometries of $\Om$ (top left), $\om^\eps$ (top right) and $\Pi^\eps$ (bottom). \label{Geometry}} 
\end{figure}

Motivated in particular by the extraordinary properties of graphene, which could make it possible to design revolutionary devices \cite{GeNo07,Geim09,NGPNG09}, major efforts have been made to understand wave propagation phenomena in quantum waveguides. Mathematically, this leads to study the spectrum of the Laplace operator with Dirichlet boundary conditions in domains made of thin ligaments forming unbounded periodic lattices. In the literature, different kind of geometries have been considered and we refer the reader to \cite{Poyn84,Kuch02,KuPo07,Grie08INI,KuchINI,ExKo15} for review studies. In the present article, we work in a waveguide $\Pi^\eps\subset\R^2$ obtained by repeating periodically a pattern $\om^\eps$, itself constructed by shrinking an inner field geometry $\Om$ by a factor $\eps>0$ small (see Figure \ref{Geometry}). We denote by $A^\eps$ the corresponding Dirichlet Laplacian in $\Pi^\eps$. From the Floquet-Bloch-Gelfand theory \cite{Gelf50,Kuch82,Skri87,Kuch93}, we know that the spectrum of $A^\eps$ has a band-gap structure, the bands being generated by the eigenvalues of a spectral problem set on the periodicity cell $\om^\eps$ with quasi-periodic boundary conditions involving the Floquet-Bloch parameter.\\
\newline
The first goal of this article is to recall how to study the features of the spectrum of $A^\eps$ as the ligaments become thinner and thinner, i.e. as $\eps\to0^+$. To proceed, following D. Grieser in \cite{Grie08} (see also \cite{Post05,MoVa07,CaEx07,MoVa08}), we use techniques of dimension reduction to derive 1D models for the spectral problem in $\om^\eps$. Then by exploiting these 1D models depending on the Floquet-Bloch parameter, for which explicit computations can be done, we get information on the spectral bands. The results that we obtain can be summarized as follows.\\ 
\newline
First, due to the Dirichlet boundary conditions, all the bands move to $+\infty$ as $O(\eps^{-2})$ when $\eps\to0^+$. Concerning the width of the bands, the results directly depend on the properties of the Dirichlet Laplacian $A^\Om$ in the near field geometry $\Om$. Note that $\Om$ is an unbounded waveguide with two infinite outlets of height one. The first spectral bands of $A^\eps$ are extremely short, of length $O(e^{-\delta/\eps})$, $\delta>0$, and their number coincides with the number of eigenvalues of $A^{\Om}$ smaller or equal to the lower bound of the continuous spectrum ($\pi^2$ in our case).\\
\newline
The width of the next spectral bands depends on the geometry of $\Om$ and more precisely on the dimension of some space $\mX_\dagger$ of almost standing waves in $\Om$. More precisely, $\mX_\dagger$ is defined as the space of bounded functions which are not exponentially decaying at infinity and which satisfy the homogeneous Helmholtz equation in $\Om$ with a spectral parameter equal to the lower bound of the continuous spectrum of $A^{\Om}$.\\
\newline
Generically, i.e. for most $\Om$, we have $\mX_\dagger=\{0\}$ and in that situation, the length of the next spectral bands of $A^\eps$ is $O(\eps)$. As a consequence, for most spectral parameters, waves can not propagate in $\Pi^\eps$. For certain particular inner field geometries $\Om$ however, there holds $\dim\,\mX_\dagger=1$. In that case, the length of the next spectral bands of $A^\eps$ in the corresponding periodic domains $\Pi^\eps$ is $O(1)$ and so they are much more possibilities for waves to propagate.\\
\newline
The second goal of this work, which makes the main originality of this article, is to study the behaviour of the spectral bands of $A^\eps$ when perturbing $\Om$ around a particular $\Om_\star$ where $\dim\,\mX_\dagger=1$. To precise the problem, assume that $\Om$ is defined via a geometrical parameter $\rho$ such that $\dim\,\mX_\dagger=1$ for $\rho=\rho_\star$ and $\mX_\dagger=\{0\}$ for $\rho-\rho_\star\ne0$ small. From these near field geometries, we can define a family of thin periodic waveguides that we denote by $\Pi^{\rho,\eps}$. In $\Pi^{\rho_\star,\eps}$, the bands of the spectrum of $A^\eps$ are rather large whereas in $\Pi^{\rho,\eps}$ for $\rho\ne\rho_\star$, they are short. How to describe the transition? To answer this question, we will propose a 1D model for the spectrum incorporating the dependence with respect to the parameter $\rho$. It will allow us to highlight a phenomenon of breathing of the spectrum: when inflating $\Om$ around $\Om_\star$, the spectral bands of $A^\eps$ suddenly expand and then shrink. Additionally, in the process a band dives below the normalized threshold $\pi^2/\eps^2$, stops breathing, ``dies'' and becomes extremely extremely short as the near field geometry $\Omega$ continues to inflate.\\
\newline
Let us mention that the spectrum of the Dirichlet Laplacian was studied in different 3D periodic waveguides in \cite{BaMaNa17,ChNa23} (see also \cite{Pank17}). More precisely, in \cite{BaMaNa17} the authors deal with geometries for which the near field domain $\Om$ is a cruciform junction of two cylinders whose cross section coincides with the unit disc. In \cite{ChNa23}, $\Om$ is the union of three bars which have a square cross section. In these works, it is shown that the corresponding $\Om$ are such that $\mX_\dagger=\{0\}$. However by varying some parameters like the angle between the cylinders or the width of certain bars, or adding a resonator, one could find situations where $\dim\,\mX_\dagger=1$. Then perturbing $\Om$ around these particular configurations, one would observe the same phenomenon of breathing of the spectrum as described in the present article. Indeed what we describe below in Section \ref{SectionModel} is generic and not specific to dimension two.\\
\newline
Finally, let us indicate that these phenomena do not appear when considering the spectrum of the Laplace operator with Neumann boundary conditions (BCs). The reason is that for this problem, the bottom of the continuous spectrum of the near field operator $A^\Om$ is zero and we always have $\dim\,\mX_\dagger=1$ because the constants solve the homogeneous Laplace equation with Neumann BCs. As a consequence, independently of the geometry of $\Om$, the good limit model is a system of ordinary differential equations on the 1D ligaments obtained when taking $\eps\to0^+$ supplemented by Kirchoff transmission conditions at the nodes of the graph (continuity of the field and zero outgoing flux, i.e. the sum of the derivatives of the field along the outgoing directions at the node vanishes). This is the so-called L. Pauling's model \cite{Paul36} which is used in \cite{KuPo07} and justified in \cite{KuZe03,Grie08,Post12}.\\
\newline
The outline is as follows. In Section \ref{SectionSetting} we start by defining the geometry and introduce the notation. Section \ref{SectionNFPb} is dedicated to the presentation of the properties of the near field operator $A^\Om$ which plays a key role in the analysis. There we also state the first important result of this work, namely Theorem \ref{MainThmPerio}, which describes the features of the spectrum of $A^\eps$ as $\eps$ tends to zero. In Sections \ref{SectionFirstBands}, \ref{SectionAsymptoHigher} we prove the different items of Theorem \ref{MainThmPerio}. Then in Section \ref{SectionModel} we establish  Theorem \ref{MainThmPerioMigration}, the second important result of this article, where we show the breathing phenomenon of the spectral bands of $A^\eps$ when perturbing the inner field geometry around a particular $\Om_\star$ where $\dim\,\mX_\dagger=1$. In Section \ref{SectionNFgeom}, we explain how to construct examples of $\Om_\star$ where $\dim\,\mX_\dagger=1$. In Section \ref{SectionGapsUsualTR}, we consider the particular case where $\Om_\star$ is such that $\dim\,\mX_\dagger=1$ and classical Kirchhoff transmissions conditions must be imposed in the first model describing the bands above $\pi^2/\eps^2$. Due to this latter property, we  find that the spectrum of this model coincides with $[\pi^2/\eps^2;+\infty)$. By taking into account terms of higher order in $\eps$, we show however that in general the spectrum of the exact operator $A^\eps$ has some short gaps. Finally in Section \ref{SectionNumerics}, we give numerical illustrations of the different results.  

\section{Setting}\label{SectionSetting}

Let $\Om\subset\R^2$ be a waveguide which coincides with the strip $\R\times(-1/2;1/2)$ outside of a bounded region (Figure \ref{Geometry} top left). For $\eps>0$, we consider the unit cell
\begin{equation}\label{defCell}
\om^\eps:=\{z:=(x,y)\in\R^2\,|\,z/\eps\in\Om\mbox{ and }|x| < 1/2\}
\end{equation}
and set $\partial\om^\eps_{\pm}:=\{\pm1/2\}\times(-1/2;1/2)$ (Figure \ref{Geometry} top right). Finally we define the periodic waveguide
\begin{equation}\label{defPerio}
\Pi^\eps:=\{z\in \R^2\,|\,(x-m,y)\in \om^\eps\cup\partial\om^\eps_{+}\mbox{ for some }m\in\Z:=\{0,\pm1,\pm2,\dots\}\}
\end{equation}
(Figure \ref{Geometry} bottom). We assume that $\Om$, $\om^\eps$ and $\Pi^\eps$ are connected with Lipschitz boundaries. In $\Pi^\eps$, we consider the spectral problem for the Dirichlet Laplacian\begin{equation}\label{PbSpectral}
\begin{array}{|rcll}
-\Delta u^\eps&=&\lambda^\eps\,u^\eps &\mbox{ in }\Pi^\eps\\[3pt]
u^\eps&=&0&\mbox{ on } \partial\Pi^\eps.
\end{array}
\end{equation}
Note that since we work with Dirichlet boundary conditions, the assumption of Lipschitz regularity of the boundary could be relaxed. The variational form associated with this problem writes
\begin{equation}\label{variationPb}
\int_{\Pi^\eps}\nabla u^\eps\cdot\nabla v^\eps\,dz=\lambda^\eps\int_{\Pi^\eps} u^\eps v^\eps\,dz,\qquad\forall v^\eps\in\mH^1_0(\Pi^\eps).
\end{equation}
Here $\mH^1_0(\Pi^\eps)$ stands for the Sobolev space of functions of $\mH^1(\Pi^\eps)$ which vanish on the boundary $\partial\Pi^\eps$. Classically (see e.g \cite[\S10.1]{BiSo87}), the variational problem (\ref{variationPb}) gives rise to an unbounded, positive definite, selfadjoint operator $A^\eps$ in the Hilbert space $\mL^2(\Pi^\eps)$, with domain $\mathcal{D}(A^\eps)\subset\mH^1_0(\Pi^\eps)$. Note that this operator is sometimes called the quantum graph Laplacian \cite{BeKu13,Post12}. Since $\Pi^\eps$ is unbounded, the embedding $\mH^1_0(\Pi^\eps)\subset \mL^2(\Pi^\eps)$ is not compact and $A^\eps$ has a non empty continuous component $\sigma_c(A^\eps)$ (\cite[Thm. 10.1.5]{BiSo87}). Actually, due to the periodicity, we have $\sigma_c(A^\eps)=\sigma(A^\eps)$. The Gelfand transform (see the surveys \cite{Kuch82,Naza99a} and books \cite{Kuch93,NaPl94}) 
\[
u^{\eps}(z)\mapsto U^\eps(z,\eta)=\cfrac{1}{(2\pi)^{1/2}}\sum_{j\in\Z} e^{i\eta j}u^{\eps}(x+j,y),\quad\eta\in\R,
\]
converts Problem (\ref{PbSpectral}) into a spectral problem set in the periodicity cell $\om^\eps$ (see Figure \ref{Geometry} top right). This problem, which involves quasi-periodicity boundary conditions at $\partial\om^\eps_\pm$, writes
\begin{equation}\label{PbSpectralCell}
\begin{array}{|rcll}
-\Delta U^\eps(z,\eta)&=&\Lambda^\eps\,U^\eps(z,\eta) & z\in\om^\eps\\[3pt]
U^\eps(z,\eta)&=&0&z\in\partial\om^\eps\cap\partial\Pi^\eps\\[3pt]
U^\eps(-1/2,y,\eta) & = & e^{i\eta}U^\eps(+1/2,y,\eta) & y\in(-\eps/2;\eps/2)\\[3pt]
\partial_xU^\eps(-1/2,y,\eta) & = & e^{i\eta}\partial_xU^\eps(+1/2,y,\eta) & y\in(-\eps/2;\eps/2).
\end{array}
\end{equation}
Problem (\ref{PbSpectralCell}) is formally selfadjoint for any value of the Floquet parameter $\eta\in\R$. Additionally the transformation $\eta\mapsto\eta+2\pi$ leaves invariant the quasiperiodicity conditions. Therefore it is sufficient to study (\ref{PbSpectralCell}) for $\eta\in[0;2\pi)$. For any $\eta\in[0;2\pi)$, the spectrum of (\ref{PbSpectralCell}) is discrete, made of the unbounded sequence of normal eigenvalues
\[
0<\Lambda_1^\eps(\eta)\le \Lambda_2^\eps(\eta)\le \dots \le \Lambda_p^\eps(\eta)\le \dots
\]
where the $\Lambda_p^\eps(\eta)$ are counted according to their multiplicity. The functions 
\[
\eta\mapsto \Lambda_p^\eps(\eta)
\]
are continuous (\cite[Chap. 9]{Kato95}) so that the sets 
\begin{equation}\label{SpectralBands}
\Upsilon^\eps_p=\{\Lambda_p^\eps(\eta),\,\eta\in[0;2\pi)\},
\end{equation}
the so-called spectral bands, are connected compact in $[0;+\infty)$. Finally, according to the theory (see again \cite{Kuch82,Naza99a,Kuch93,NaPl94}), the spectrum of the operator $A^{\eps}$ has the form
\[
\sigma(A^\eps)=\bigcup_{p\in\N^\ast}\Upsilon^\eps_p
\]
where $\N^\ast:=\{1,2,\dots\}$. Thus, we see that to study the behaviour of the spectrum of $A^\eps$ with respect to $\eps\to0^+$, we have to clarify the dependence of the $\Upsilon^\eps_p$ in $\eps$.

\section{Near field problem and first main result}\label{SectionNFPb}
As already mentioned in the introduction, the analysis developed for example in \cite{Grie08,Naza14} shows that the asymptotic behaviour of the $\Upsilon^\eps_p$ with respect to $\eps$ depends on the features of the Dirichlet Laplacian in $\Om$, the geometry obtained when zooming in the periodicity cell $\om^\eps$ (see Figure \ref{Geometry} top left). This leads us to study the spectral problem 
\begin{equation}\label{PbSpectralZoom}
\begin{array}{|rcll}
-\Delta u&=&\lambda\,u &\mbox{ in }\Om\\[3pt]
u&=&0&\mbox{ on } \partial\Om
\end{array}
\end{equation}
which is now independent of $\eps$. We denote by $A^\Om$ the unbounded, positive definite, selfadjoint operator naturally associated with this problem defined in the Hilbert space $\mL^2(\Om)$, with domain $\mathcal{D}(A^\Om)\subset\mH^1_0(\Om)$. Its continuous spectrum $\sigma_c(A^\Om)$ occupies the ray $[\pi^2;+\infty)$ and the threshold point $\lambda_\dagger:=\pi^2$ is the first eigenvalue of the Dirichlet Laplacian in the cross section of the unbounded branches of $\Om$. According to the geometry of $\Om$, the operator $A^\Om$ may have or not discrete spectrum. To set ideas, assume that $A^\Om$ has exactly $N_{\bullet}\in\N:=\{0,1,2,\dots\}$ eigenvalues (counted with multiplicity) in its discrete spectrum, that we denote  by
\begin{equation}\label{DefMu1}
0<\mu_1<\mu_2\le \mu_3 \le \dots \le \mu_{N_{\bullet}}<\pi^2.
\end{equation}
The fact that $\mu_1$ is simple is a classical consequence of the Krein-Rutman theorem (see e.g. \cite[Thm. 1.2.5]{Henr06}). We will see below that the properties of (\ref{PbSpectralZoom}) with $\lambda$ coinciding with the threshold of the continuous spectrum of $A^\Om$ plays an important role in the asymptotics of certain of the $\Upsilon^\eps_p$ as $\eps\to0^+$. This leads us to study the problem 
\begin{equation}\label{NearFieldPb}
\begin{array}{|rcll}
\Delta W+\pi^2W&=&0&\mbox{ in }\Om\\
W&=&0 &\mbox{ on }\partial\Om.
\end{array}
\end{equation}
Let us describe the features of Problem (\ref{NearFieldPb}). To proceed, first define the waves $w_0$, $w_1$ and the linear wave packets $w_{\pm}^{\mrm{out}}$, $w_{\pm}^{\mrm{in}}$ such that
\begin{equation}\label{defModeswinout}
w_0(z)=\varphi(y),\qquad w_1=|x|\varphi(y),\qquad 
\begin{array}{|lcl}
w^{\mrm{out}}(z) &=& w_1(z)- iw_0(z) =(|x|- i)\varphi(y)\\[5pt]
w^{\mrm{in}}(z) &=& w_1(z)+ iw_0(z) =(|x|+ i)\varphi(y)
\end{array}
\end{equation}
with
\begin{equation}\label{defUdagger}
\varphi(y)=\sqrt{2}\cos(\pi y).
\end{equation}
Here the notation ``in/out'' stands for ``incoming/outgoing''. Introduce some cut-off functions $\psi_\pm\in\mathscr{C}^{\infty}(\R)$ such that $\psi_\pm(x)=1$ for $\pm x>1$ and $\psi_\pm(x)=0$ for $\pm x\le 1/2$. The theory presented for example in \cite[Chap. 5]{NaPl94} guarantees that Problem (\ref{NearFieldPb}) admits solutions of the form
\begin{equation}\label{ScaSol}
\begin{array}{rcl}
v_+ & =& \psi_-s_{+-}w^{\mrm{out}}+\psi_+(w^{\mrm{in}}+s_{++}w^{\mrm{out}})+\tilde{v}_+ \\[5pt]
v_- & =& \psi_-(w^{\mrm{in}}+s_{--}w^{\mrm{out}})+\psi_+s_{-+}w^{\mrm{out}}+\tilde{v}_-
\end{array}
\end{equation}
where $s_{\pm\pm}$, $s_{\pm\mp}$ are complex numbers and the $\tilde{v}_\pm$ decay exponentially as $|x|\to+\infty$. The coefficients $s_{\pm\pm}$, $s_{\pm\mp}$ form the so-called threshold scattering matrix
\begin{equation}\label{DefTRScaMa}
\mathbb{S}:=\left(
\begin{array}{cc}
s_{++} & s_{+-} \\[2pt]
s_{-+} & s_{--}
\end{array}
\right).
\end{equation}
It is known that $\mathbb{S}$ is symmetric ($\mathbb{S}=\mathbb{S}^{\top}$) but not hermitian in general, and unitary ($\mathbb{S}\overline{\mathbb{S}}^{\top}=\mrm{Id}$). Here and below $\mrm{Id}$ is the identity matrix of $\Cplx^{2\times2}$. On the other hand, it may happen that (\ref{NearFieldPb}) also admits solutions which are exponentially decaying at infinity (trapped modes). In this situation, $v_\pm$ are not uniquely defined. However these trapped modes do not modify the threshold scattering matrix because of the decay and therefore $\mathbb{S}$ is always uniquely defined. Following \cite{MoVa08,Naza17bis,KoNS16,Naza20Threshold}, now we introduce some vocabulary. Define the vector spaces
\begin{equation}\label{DefSpaceX}
\begin{array}{lcl}
\mX&\hspace{-0.2cm}:=&\hspace{-0.2cm}\dsp\{U=\sum_{\pm}\psi_\pm (C^1_\pm w_1+C^0_\pm w_0)+\tilde{U}\mbox{ satisfying  }(\ref{NearFieldPb})\mbox{ with }C^1_\pm,\,C^0_\pm\in\Cplx\mbox{ and }\tilde{U}\in\mH^1_0(\Om)\};\\[14pt]
\mX_{\mrm{bo}}&\hspace{-0.2cm}:=&\hspace{-0.2cm}\dsp\{U=\sum_{\pm}\psi_\pm C^0_\pm w_0+\tilde{U}\mbox{ satisfying  }(\ref{NearFieldPb})\mbox{ with }C^0_\pm\in\Cplx\mbox{ and }\tilde{U}\in\mH^1_0(\Om)\};\\[14pt]
\mX_{\mrm{tr}}&\hspace{-0.2cm}:=&\hspace{-0.2cm}\dsp\{U\in\mH^1_0(\Om)\mbox{ satisfying  }(\ref{NearFieldPb})\}.
\end{array}\hspace{-0.3cm}
\end{equation}
Here $\mX_{\mrm{bo}}$ is the space of bounded solutions of (\ref{NearFieldPb}) while $\mX_{\mrm{tr}}$ stands for the space of trapped modes. 

\begin{definition}
We say that $A^{\Om}$ has a Threshold Resonance (TR) if $\mX_{\mrm{bo}}\ne\{0\}$, i.e. if (\ref{NearFieldPb}) admits a non zero bounded solution.
\end{definition}

\begin{definition}
We say that $A^{\Om}$ has a proper TR if $\mX_{\mrm{bo}}/\mX_{\mrm{tr}}\ne\{0\}$, i.e. if (\ref{NearFieldPb}) admits a bounded solution with does not decay at infinity. If $\mX_{\mrm{bo}}=\mX_{\mrm{tr}}\ne\{0\}$, we say that the TR is improper. 
\end{definition}

\begin{definition}\label{DefXdagger}
Set $\mX_{\dagger}:=\mX_{\mrm{bo}}/\mX_{\mrm{tr}}$. This quotient space is sometimes called the space of almost standing waves of (\ref{NearFieldPb}).
\end{definition}

\noindent In the following, the dimension of $\mX_{\dagger}$ will play an important role. The following proposition gives a characterization of this quantity. Its proof is similar to the one of \cite[Thm.\,7.1]{Naza16} (see also \cite{Naza14}).
\begin{proposition}\label{PropoCharacterisation}
We have $\dim\,\mX_{\dagger}=\dim(\ker\,(\mathbb{S}+\mrm{Id}))$.
\end{proposition}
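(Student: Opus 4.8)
The plan is to encode every solution of (\ref{NearFieldPb}) with at most linear growth by its far-field data at the two outlets, and to translate the defining conditions of $\mX_{\mrm{bo}}$ and of $\mX_{\mrm{tr}}$ into linear-algebraic conditions on this data. First I would record the elementary change of basis relating the growing/bounded waves to the incoming/outgoing packets, namely $w_1=\tfrac12(w^{\mrm{out}}+w^{\mrm{in}})$ and $w_0=\tfrac1{2i}(w^{\mrm{in}}-w^{\mrm{out}})$. Given $U\in\mX$ with far-field coefficients $C^1_\pm,C^0_\pm$ as in (\ref{DefSpaceX}), this lets me rewrite its behaviour at $\pm\infty$ as $a_\pm w^{\mrm{in}}+b_\pm w^{\mrm{out}}$ with
\[
a_\pm=\tfrac12(C^1_\pm-iC^0_\pm),\qquad b_\pm=\tfrac12(C^1_\pm+iC^0_\pm),
\]
and I set $a=(a_+,a_-)^\top$, $b=(b_+,b_-)^\top\in\Cplx^2$. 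The two identities to retain are: $a+b$ recovers the coefficients $C^1_\pm$ of the growing wave $w_1$, so that $U$ is bounded if and only if $b=-a$; and $a-b=-i(C^0_+,C^0_-)^\top$, so the far-field map $U\mapsto(C^0_+,C^0_-)$ is, up to a nonzero scalar, the map $U\mapsto a-b$.

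Next I would use the scattering relation $b=\mathbb{S}a$, valid for every $U\in\mX$ modulo trapped modes. Indeed, by the general theory for elliptic problems in cylindrical ends (\cite[Chap.\,5]{NaPl94}) the solutions $v_+,v_-$ of (\ref{ScaSol}) form a basis of the quotient $\mX/\mX_{\mrm{tr}}$, which has dimension two; reading off from (\ref{ScaSol}) that $v_+$ has incoming data $(1,0)^\top$ and outgoing data $(s_{++},s_{+-})^\top$, while $v_-$ has incoming data $(0,1)^\top$ and outgoing data $(s_{-+},s_{--})^\top$, one obtains $b=\mathbb{S}a$ for $U=a_+v_++a_-v_-$ (using the symmetry $\mathbb{S}=\mathbb{S}^\top$; in any case $\mathbb{S}$ and $\mathbb{S}^\top$ share the same kernel dimension). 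Combining this with the boundedness criterion $b=-a$ from the previous step, a class $U\in\mX_{\mrm{bo}}/\mX_{\mrm{tr}}$ is bounded precisely when $\mathbb{S}a=-a$, that is $a\in\ker(\mathbb{S}+\mrm{Id})$.

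It then remains to assemble these facts into an isomorphism. I would define $\Phi:\mX_{\mrm{bo}}\to\Cplx^2$ by $\Phi(U)=a(U)$ (equivalently, up to the above scalar, the vector $(C^0_+,C^0_-)$, since $C^1_\pm=0$ forces $b=-a$). By the standard decay dichotomy in the waveguide outlets, a bounded solution whose far-field data vanishes is exponentially decaying, so $\ker\Phi=\mX_{\mrm{tr}}$; hence $\Phi$ descends to an injective map $\overline{\Phi}:\mX_\dagger=\mX_{\mrm{bo}}/\mX_{\mrm{tr}}\hookrightarrow\Cplx^2$ (recall $\mX_\dagger$ from Definition \ref{DefXdagger}). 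By the previous paragraph its range lies in $\ker(\mathbb{S}+\mrm{Id})$, and conversely every $a\in\ker(\mathbb{S}+\mrm{Id})$ is attained, since $U=a_+v_++a_-v_-$ then satisfies $b=\mathbb{S}a=-a$, whence $C^1_\pm=0$ and $U\in\mX_{\mrm{bo}}$ with $\Phi(U)=a$. Therefore $\overline{\Phi}$ is an isomorphism onto $\ker(\mathbb{S}+\mrm{Id})$ and $\dim\mX_\dagger=\dim\,(\ker\,(\mathbb{S}+\mrm{Id}))$.

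The step I expect to be the main obstacle is the structural input of the second paragraph: that $v_+,v_-$ exhaust $\mX$ modulo trapped modes and, equivalently, that the far-field-to-amplitude correspondence is governed by the single $2\times2$ matrix $\mathbb{S}$. This rests on the self-adjoint/symplectic structure of the problem at the threshold together with the exponential-decay characterization of $\mX_{\mrm{tr}}$; both are classical for waveguides with cylindrical ends but must be invoked with care because $\lambda_\dagger=\pi^2$ is \emph{exactly} the threshold, where the relevant waves $w_0,w_1$ grow only polynomially rather than oscillating or decaying.
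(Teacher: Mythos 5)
Your proof is correct. The paper does not actually write out an argument for this proposition --- it only points to \cite[Thm.\,7.1]{Naza16} --- and your reduction is precisely the standard argument behind that reference: encode a solution by its incoming/outgoing first-mode data $(a,b)$ via the change of basis $w_1=\tfrac12(w^{\mrm{out}}+w^{\mrm{in}})$, $w_0=\tfrac1{2i}(w^{\mrm{in}}-w^{\mrm{out}})$, use $b=\mathbb{S}a$ on $\mX$ modulo $\mX_{\mrm{tr}}$, observe that boundedness is equivalent to $b=-a$, and check that the data map has kernel exactly $\mX_{\mrm{tr}}$ by the exponential-decay dichotomy. The two structural inputs you flag (that $v_\pm$ span $\mX$ modulo $\mX_{\mrm{tr}}$, and that vanishing first-mode data forces exponential decay) are exactly the facts from \cite[Chap.\,5]{NaPl94} that the paper itself invokes elsewhere, and you correctly handled the only real trap in the computation, namely that the outgoing data of $a_+v_++a_-v_-$ is $\mathbb{S}^{\top}a$, which equals $\mathbb{S}a$ by the symmetry of $\mathbb{S}$.
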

\noindent Let us describe the different situations that one can meet.\\[4pt]
$\bullet$ The dimension of $\mX_{\mrm{tr}}$ is independent of the other quantities. Moreover, the existence of trapped modes for (\ref{NearFieldPb}) (hence at the threshold of the continuous spectrum of $A^{\Om}$) is a rare phenomenon which appears only for specific geometries which are unstable with respect to small perturbations.\\
\newline
$\bullet$ From the definition of $\mX_{\mrm{bo}}$, clearly we have $\dim\,\mX_{\dagger}\le2$. On the other hand, since $\mathbb{S}$ is a unitary matrix, its two eigenvalues are located on the unit circle. Proposition \ref{PropoCharacterisation} ensures that Problem (\ref{NearFieldPb}) has a proper TR when one of these eigenvalues is located at $-1$. For this reason, the existence of TR is not the common situation. In other words, in general there holds $\mX_{\mrm{bo}}=\{0\}$. However let us mention that the reference strip $\Om=\R\times(-1/2;1/2)$ offers a very simple example of geometry where $\mX_{\mrm{bo}}\ne\{0\}$. More precisely, in that case there holds $\mX_{\mrm{bo}}=\mrm{span}(w_0)$, $\mX_{\mrm{tr}}=\{0\}$, and so $\dim\,\mX_{\dagger}=1$ (see Remark \ref{RmkRefStrip} for more details). It is an open question to prove the existence of waveguides where $\dim\,\mX_{\dagger}=2$ but this does not seem impossible. For an example of geometry in 2D where this happens for the Neumann Laplacian at a positive threshold, we refer the reader to \cite{NazaDoubleTR}.\\
\newline
Once this notation has been introduced, we can give the first important result of this article.
\begin{theorem}\label{MainThmPerio}
For $p\in\N^\ast$, let $\Upsilon^\eps_p=[a^\eps_{p-};a^\eps_{p+}]$, with $a^\eps_{p-}\le a^\eps_{p+}$, be the spectral band in (\ref{SpectralBands}). 
There are some (real) constants $c_{p-}<c_{p+}$, $C_{p}>0$, $\delta_p>0$ and $\eps_p>0$ such that we have \\[6pt]
$\mbox{For }p=1,\dots,N_\bullet:$
\begin{equation}\label{result_thm_1}
\Big|a^\eps_{p\pm}-\Big(\eps^{-2}\mu_p+\eps^{-2}e^{-\sqrt{\pi^2-\mu_p}/\eps}c_{p\pm}\Big)\Big| \le C_p\,e^{-(1+\delta_p)\sqrt{\pi^2-\mu_p}/\eps},\qquad\forall\eps\in(0;\eps_p];\\[4pt]
\end{equation}
$\mbox{For }p=N_\bullet+1,\dots,N_{\bullet}+N_{\dagger}:$
\begin{equation}\label{result_thm_2}
\Big|a^\eps_{p\pm}-\Big(\eps^{-2}\pi^2+\eps^{-2}e^{-\pi\sqrt{3}/\eps}c_{p\pm}\Big)\Big| \le C_p\,e^{-(1+\delta_p)\pi\sqrt{3}/\eps},\qquad\forall\eps\in(0;\eps_p];\\[4pt]
\end{equation}
$\mbox{For }p=N_{\bullet}+N_{\dagger}+m,\,m\in\N^{\ast}:$
\begin{equation}\label{result_thm_3}
\begin{array}{ll}
\phantom{ii}i)\mbox{ if }\mX_{\dagger}=\{0\}, &\qquad \Big|a^\eps_{p\pm}-\Big(\eps^{-2}\pi^2+m^2\pi^2+\eps c_{p\pm}\Big)\Big|\le C_{p}\,\eps^{1+\delta_p},\qquad\forall\eps\in(0;\eps_p];\\[10pt]
\phantom{i}ii)\mbox{ if }\dim\,\mX_{\dagger}=1, &  \qquad\Big|a^\eps_{p\pm}-\Big(\eps^{-2}\pi^2+c_{p\pm}\Big)\Big|\le C_{p}\,\eps^{1+\delta_p},\qquad\forall\eps\in(0;\eps_p];\\[10pt]
iii)\mbox{ if }\dim\,\mX_{\dagger}=2, & \qquad\Big|a^\eps_{p\pm}-\Big(\eps^{-2}\pi^2+(m-1)^2\pi^2+\eps c_{p\pm}\Big)\Big|\le C_{p}\,\eps^{1+\delta_p},\quad\forall\eps\in(0;\eps_p].
\end{array}
\end{equation}
Here the $\mu_p$ are the ones introduced in (\ref{DefMu1}).
\end{theorem}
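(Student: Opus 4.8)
The plan is to combine the Floquet--Bloch reduction recorded above with a matched asymptotic analysis of the cell problem (\ref{PbSpectralCell}), following \cite{Grie08,Naza14}. Fix $\eta\in[0;2\pi)$ and set $N_\dagger:=\dim\mX_\dagger$. For the bands expected near the normalized threshold I introduce the reduced parameter $\nu:=\Lambda^\eps-\eps^{-2}\pi^2$ and split $\om^\eps$ into an \emph{outer region} (the thin ligament, where $z$ stays at distance $\gg\eps$ from the resonator) and an \emph{inner region} obtained by zooming $\xi=z/\eps$, whose limit is the near field geometry $\Om$. In the outer region the ansatz $u^\eps(x,y)\approx\eps^{-1/2}v(x)\varphi(y/\eps)$ inserted into $-\Delta u^\eps=\Lambda^\eps u^\eps$ gives the one-dimensional equation $-v''=\nu\,v$ on the ligament, together with the quasi-periodicity $v(-1/2)=e^{i\eta}v(1/2)$ and $v'(-1/2)=e^{i\eta}v'(1/2)$ inherited from (\ref{PbSpectralCell}). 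In the inner region $-\Delta_\xi U=\eps^2\Lambda^\eps\,U\approx\pi^2U$, so $U$ solves exactly the near field problem (\ref{NearFieldPb}) and its admissible far-field behaviour $\sum_\pm\psi_\pm(C^1_\pm w_1+C^0_\pm w_0)$ is constrained by the threshold scattering matrix $\mathbb{S}$.

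The core of the argument is the matching, which turns the inner scattering data into node conditions for $v$ at $x=0$. Decomposing $C^1_\pm w_1+C^0_\pm w_0$ into the waves $w^{\mrm{out}},w^{\mrm{in}}$ of (\ref{defModeswinout}), the amplitudes $a^{\mrm{in}}_\pm=\tfrac12(C^1_\pm-iC^0_\pm)$ and $a^{\mrm{out}}_\pm=\tfrac12(C^1_\pm+iC^0_\pm)$ obey the scattering relation $a^{\mrm{out}}=\mathbb{S}\,a^{\mrm{in}}$, that is $(\mathbb{S}-\mrm{Id})C^1=i(\mathbb{S}+\mrm{Id})C^0$ in the obvious vector notation. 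Re-expanding the outer field near $x=0^\pm$ in the variable $x=\eps\xi_1$ shows that, up to the common amplitude, $C^0_\pm$ matches the Dirichlet datum $v(0^\pm)$ while $C^1_\pm$ matches $\eps\,v'(0^\pm)$; hence $C^1=O(\eps)C^0$ and the leading constraint reduces to $(\mathbb{S}+\mrm{Id})C^0=0$, i.e. $v(0)\in\ker(\mathbb{S}+\mrm{Id})$. By Proposition \ref{PropoCharacterisation} this kernel is exactly $\mX_\dagger$, which is what makes $\dim\mX_\dagger$ the decisive quantity. Accordingly, when $\mX_\dagger=\{0\}$ the node forces the Dirichlet condition $v(0^\pm)=0$; when $\dim\mX_\dagger=1$ the single kernel vector imposes one proportionality relation between $v(0^+)$ and $v(0^-)$, supplemented at next order by a Kirchhoff-type relation on $v'(0^\pm)$, permitting genuine $\eta$-dependence; and when $\dim\mX_\dagger=2$ one has $\mathbb{S}=-\mrm{Id}$, so $C^1=0$ and the node imposes the Neumann condition $v'(0^\pm)=0$.

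Solving $-v''=\nu v$ on $(-1/2,1/2)$ with these node and quasi-periodicity conditions then yields the leading terms of (\ref{result_thm_3}). Dirichlet decoupling gives the $\eta$-independent eigenvalues $\nu=m^2\pi^2$ (flat at leading order, whence bands of width $O(\eps)$ after including the next correction); the case $\dim\mX_\dagger=1$ gives a dispersive relation whose image over $\eta$ is an interval $[c_{p-};c_{p+}]$ of length $O(1)$; and the Neumann case gives $\nu=(m-1)^2\pi^2$, again $\eta$-independent at leading order. For the lower bands I repeat the analysis with shifted inner energy. For $p\le N_\bullet$ the inner field is the trapped mode at $\mu_p$, decaying like $e^{-\sqrt{\pi^2-\mu_p}\,|\xi_1|}$, so the sole $\eta$-dependence arises from inter-cell tunnelling across the cell boundary at $\xi_1=\pm(2\eps)^{-1}$, producing the factor $e^{-\sqrt{\pi^2-\mu_p}/\eps}$ of (\ref{result_thm_1}); for the $N_\dagger$ threshold bands of (\ref{result_thm_2}) the standing wave is constant in $x$ and the exponential smallness comes instead from the first evanescent transverse mode, whose decay rate at energy $\pi^2$ is $\sqrt{4\pi^2-\pi^2}=\pi\sqrt{3}$, giving $e^{-\pi\sqrt{3}/\eps}$.

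Finally I would make the expansion rigorous in the usual two-sided manner. Gluing the inner and outer pieces with the cut-offs $\psi_\pm$ produces global quasimodes; estimating their residual in $\mL^2(\om^\eps)$ and invoking the spectral lemma (if $\|(A-\Lambda)U\|\le\delta\|U\|$ then $\sigma(A)$ meets the $\delta$-neighbourhood of $\Lambda$) gives one inclusion of each bound. The converse --- that the quasimodes account for \emph{all} the relevant eigenvalues, with correct multiplicity and no spurious ones --- I would obtain by a Lyapunov--Schmidt reduction of (\ref{PbSpectralCell}) to a finite-dimensional problem whose matrix is, up to controlled errors, the one encoding the node conditions, together with a min-max counting argument. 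I expect the principal difficulties to be twofold: first, the clean derivation of the effective node conditions and their dependence on $\mX_\dagger$, which requires careful bookkeeping of the $\eps$-orders in the matching and the correct identification of the solvability condition through $\mathbb{S}+\mrm{Id}$; and second, the rigorous separation of the exponentially small leading correction from the remainder in (\ref{result_thm_1})--(\ref{result_thm_2}), where the two differ only by an extra factor $e^{-\delta_p(\cdots)/\eps}$ and must be controlled by sharp weighted estimates on the inner fields near $\xi_1=\pm(2\eps)^{-1}$.
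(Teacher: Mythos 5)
Your treatment of (\ref{result_thm_1}) and (\ref{result_thm_3}) follows essentially the same route as the paper: the outer ODE $-v''=\nu v$ with the quasi-periodicity conditions, the inner near field problem (\ref{NearFieldPb}), and the matching relations $\boldsymbol{\gamma}(0)=i(\mrm{Id}-\mathbb{S})\boldsymbol{a}$, $\eps\partial_x\boldsymbol{\gamma}(0)=(\mrm{Id}+\mathbb{S})\boldsymbol{a}$, which via Proposition \ref{PropoCharacterisation} produce the Dirichlet, weighted Kirchhoff or Neumann node conditions according to $\dim\,\mX_\dagger$. (Your remark that the Kirchhoff relation on the derivatives arrives ``at next order'' is only a bookkeeping difference: projecting the exact relation onto the eigenvector $\boldsymbol{b}_1$ of $\mathbb{S}$ for the eigenvalue $-1$ gives $(\partial_x\boldsymbol{\gamma}(0),\boldsymbol{b}_1)=0$ exactly, so both node conditions hold at leading order.) The tunnelling mechanism you invoke for the first $N_\bullet$ bands, and the need for the next-order polarization correction to resolve the $O(\eps)$ band widths in cases $i)$ and $iii)$, are also the paper's.

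There is, however, a genuine error in your treatment of (\ref{result_thm_2}). You set $N_\dagger:=\dim\,\mX_\dagger$, whereas in the theorem $N_\dagger=\dim\,\mX_{\mrm{tr}}$, the number of \emph{trapped modes} of (\ref{NearFieldPb}) (see the beginning of Section \ref{SectionAsymptoHigher}); the space $\mX_\dagger=\mX_{\mrm{bo}}/\mX_{\mrm{tr}}$ is a different object. Correspondingly, the mechanism you propose for these bands --- an inner standing wave ``constant in $x$'' whose evanescent part decays at rate $\pi\sqrt3$ --- cannot work: a bounded non-decaying solution matches the outer field at order one and therefore generates an $O(1)$ band, which is precisely case $ii)$ of (\ref{result_thm_3}), not an exponentially thin band. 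The bands of (\ref{result_thm_2}) are generated by the elements of $\mX_{\mrm{tr}}$, i.e. genuine $\mH^1_0(\Om)$ solutions of (\ref{NearFieldPb}); since such a solution lies in $\mL^2$, its component on the first transverse mode vanishes and it decays like $e^{-\sqrt{4\pi^2-\pi^2}\,|x|}=e^{-\pi\sqrt3\,|x|}$, after which the same inter-cell tunnelling argument as for (\ref{result_thm_1}) yields the width $O(e^{-\pi\sqrt3/\eps})$. With your convention the band count is wrong: for the straight strip $\Om=\R\times(-1/2;1/2)$ one has $\dim\,\mX_\dagger=1$ and $\mX_{\mrm{tr}}=\{0\}$, so there is no exponentially thin band at the threshold --- the spectrum of $A^\eps$ is exactly $[\pi^2/\eps^2;+\infty)$ --- while your scheme would predict one.
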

\noindent Let us comment the different results provided by this theorem. First, when $\eps$ tends to zero, the whole spectrum of $A^\eps$ goes to $+\infty$ as $\eps^{-2}$. This is due to the Dirichlet condition imposed on the boundary combined with the fact that the elements of the lattice have diameter in $O(\eps)$. Besides, the first $N_{\bullet}+N_{\dagger}$ spectral bands of $A^\eps$ become extremely short, in $O(e^{-c/\eps})$ for some $c>0$ which depends on the band. Concerning the next spectral bands $\Upsilon^\eps_p$, $p=N_{\bullet}+N_{\dagger}+m$ with $m\in\N^\ast$, the behaviour depends on the dimension of $\mX_{\dagger}$. When the latter is zero or two (cases $i)$ and  $iii)$), the spectral bands are of length $O(\eps)$. Moreover, between $\Upsilon^\eps_p$ and $\Upsilon^\eps_{p+1}$, there is a gap, that is, a segment of spectral parameters $\lambda^\eps$ such that waves cannot propagate, whose length tends to $(2m+1)\pi^2$ (resp. $(2m-1)\pi^2$) in case $i)$ (resp. $iii)$). In other words, for these two cases, the propagation of waves in the thin lattice $\Pi^\eps$ is hampered and occurs only for very narrow (closed) intervals of frequencies. When the dimension of $\mX_{\dagger}$ is one (case $ii)$), the situation is very different. Indeed, asymptotically the spectral band $\Upsilon^\eps_p$ is of length $c_{p+}-c_{p-}$, with in general $c_{p+}>c_{p-}$. As a consequence, waves can propagate in $\Pi^\eps$ for much larger intervals of frequencies than in cases $i)$ and $iii)$.

\section{Asymptotic behaviour of the first $N_{\bullet}$ spectral bands}\label{SectionFirstBands}

In this section, we recall how to obtain a model leading to an expansion for the spectral band $\Upsilon^\eps_p$, $p\in\{1,\dots,N_{\bullet}\}$, appearing in (\ref{SpectralBands}). By definition, we have 
\begin{equation}\label{Def_FirstBande}
\Upsilon^\eps_p=\{\Lambda_p^\eps(\eta),\,\eta\in[0;2\pi)\}
\end{equation}
where $\Lambda_p^\eps(\eta)$ is the first eigenvalue of (\ref{PbSpectralCell}). Therefore our goal is to obtain an asymptotic expansion of $\Lambda_p^\eps(\eta)$ with respect to $\eps$ as $\eps\to0^+$.\\
\newline
Pick $\eta\in[0;2\pi)$. Let $u^\eps(\cdot,\eta)$ be an eigenfunction associated with $\Lambda_p^\eps(\eta)$. As a first approximation when $\eps\to0^+$, it is natural to consider the expansions
\begin{equation}\label{ExpansionTR0}
\Lambda_p^\eps(\eta)=\eps^{-2}\mu_p+\dots,\qquad u^\eps(z,\eta)=v(z/\eps)+\dots
\end{equation}
where $\mu_p\in(0;\pi^2)$ stands for an eigenvalue of the discrete spectrum of the operator $A^{\Om}$ introduced in (\ref{DefMu1}) and $v\in\mH^1_0(\Om)$ is a corresponding eigenfunction normalized in $\mL^2(\Om)$. Indeed, inserting the pair $(\eps^{-2}\mu_p,v(\cdot/\eps))$ in Problem (\ref{PbSpectralCell}) only leaves a small discrepancy on the faces $\partial\om^{\eps}_\pm$ defined after (\ref{defCell}) because $v$ is exponentially decaying at infinity. Let us write more precisely the decomposition of $v$ at infinity for further usage. Using Fourier decomposition, we get 
\begin{equation}\label{ExpanEigenVec}
v(z)=K_\pm\,e^{-\beta^p_1 |x|}\,\varphi(y)+O(e^{-\beta^p_2 |x|})\qquad\mbox{ as } \pm x\to+\infty.
\end{equation}
Here $K_\pm\in\R$, $\beta^p_1:=\sqrt{\pi^2-\mu_p}$, $\beta^p_2:=\sqrt{4\pi^2-\mu_p}$ and $\varphi$ appears in (\ref{defUdagger}). The first model (\ref{ExpansionTR0}) is simple but does not comprise the dependence in $\eta$. To improve it, consider the more refined ans\"atze
\begin{equation}\label{ExpansionTR1}
\Lambda_p^\eps(\eta)=\eps^{-2}\mu_p+\eps^{-2}e^{-\beta^p_1/\eps}M(\eta)+\dots,\qquad u^\eps(z,\eta)=v(z/\eps)+e^{-\beta^p_1/\eps}V(z/\eps,\eta)+\dots
\end{equation}
where the quantities $M(\eta)$ and $V(\cdot,\eta)$, which are complex valued, are to determine. Inserting (\ref{ExpansionTR1}) into (\ref{PbSpectralCell}), first we obtain that $V(\cdot,\eta)$ must satisfy
\begin{equation}\label{Pb_corrector_TR}
\begin{array}{|rccl}
-\Delta V(\cdot,\eta)-\mu_p V(\cdot,\eta)&=&M(\eta)v & \mbox{ in }\Om\\[3pt]
 V(\cdot,\eta) &=&0 &\mbox{ on }\partial\Om.
\end{array}
\end{equation}
Below, to simplify and because this is not the heart of the article, we shall assume that $\mu_p$ is a simple eigenvalue and the constants $K_\pm$ are non zero. When this is not the case, the analysis can be easily adapted (see Remark \ref{RemarkMultipleEig} below). In order to have a solution to (\ref{Pb_corrector_TR}), we must look for a $V(\cdot,\eta)$ which is growing at infinity. Due to (\ref{Pb_corrector_TR}), the simplest growth that we can allow is
\[
V(z,\eta)=B_\pm\,e^{\beta^p_1 |x|}\,\varphi(y)+O(e^{-\beta^p_1 |x|})\qquad\mbox{ as } \pm x\to+\infty
\]
where the $B_\pm$ are some constants. Then the quasi-periodic conditions satisfied by $u^\eps(\cdot,\eta)$ at $\partial\om^\eps_\pm$ (see (\ref{PbSpectralCell})) lead us to choose the $B_j$ such that
\[
\begin{array}{|l}
K_-+B_-=e^{i\eta}(K_++B_+)\\[2pt]
K_--B_-=e^{i\eta}(-K_++B_+)
\end{array}
\]
Solving this algebraic system, we obtain $B_+=K_-\,e^{-i\eta}$ and $B_-=K_+\,e^{+i\eta}$. Now since $\mu_p$ is a simple eigenvalue of $A^\Om$, multiplying (\ref{Pb_corrector_TR}) by $v$, integrating by parts in 
\[
\Om^R:=\{(x,y)\in\Om\,|\,|x|<R\}
\] 
and taking the limit $R\to+\infty$, we find that there is a solution if and only if the following compatibility condition 
\[
M(\eta)\|v\|^2_{\mL^2(\Om)}=-2\beta^p_1K_+K_-(e^{i\eta}+e^{-i\eta})\qquad\Leftrightarrow\qquad M(\eta)=-4\beta^p_1K_+K_-\cos\eta
\]
is satisfied. This defines the value of $M(\eta)$ in the expansion (\ref{ExpansionTR1}). From (\ref{Def_FirstBande}), we deduce that as $\eps$ tends to zero, the bounds of $\Upsilon^\eps_p=[a^\eps_{p-};a^\eps_{p+}]$ admit the asymptotics
\[
a^\eps_{p\pm}=\eps^{-2}\mu_{p}\pm\eps^{-2}e^{-\sqrt{\pi^2-\mu_p}/\eps}c_p+\dots
\]
with $c_p=4\beta^p_1K_+K_-$. This shows the result (\ref{result_thm_1}) of Theorem \ref{MainThmPerio}. Note that we decided to focus on a rather formal presentation above for the sake of conciseness. We emphasize that all these results can be completely justified by proving rigorous error. This has been realized in detail in \cite{Naza17,Naza18,Naza23} for similar problems and can be repeated here with obvious modifications.

\begin{remark}\label{RemarkMultipleEig}
When $\mu_p$ is a multiple eigenvalue of $A^\Om$, one can always construct a basis of the corresponding eigenspace for which at most one vector admits the expansion (\ref{ExpanEigenVec}) with $K_+K_-\ne0$. Let us mention that the spectral bands generated by the other vectors of the basis will be shorter than the one studied above when $\eps\to0^+$. 
\end{remark}

\section{Asymptotic behaviour of the higher spectral bands}\label{SectionAsymptoHigher}

We turn our attention to the asymptotics of the spectral bands 
\begin{equation}\label{Def_FirstBande_2}
\Upsilon^\eps_p=\{\Lambda_p^\eps(\eta),\,\eta\in[0;2\pi)\}
\end{equation}
for $p\ge N_{\bullet}$ when $\eps\to0^+$. First, if in $\Om$, there holds $\dim\,\mX_{\mrm{tr}}=N_{\dagger}\ge1$, then not only the first $N_{\bullet}$ spectral bands of $A^\eps$ become extremely tight as $\eps\to0^+$ but also the next $N_{\dagger}$ ones. More precisely, working as in Section \ref{SectionFirstBands} above, one can show that for $p\in\{N_{\bullet}+1,\dots,N_{\bullet}+N_{\dagger}\}$, the bounds of $\Upsilon^\eps_p=[a^\eps_{p-};a^\eps_{p+}]$ admit the asymptotics
\[
a^\eps_{p\pm}=\eps^{-2}\pi^2\pm\eps^{-2}e^{-\sqrt{3}\pi/\eps}c_p+\dots
\]
for some real constant $c_p$. We do not detail more that case, which gives the result (\ref{result_thm_2}) of Theorem \ref{MainThmPerio}, and now consider the asymptotics of the band $\Upsilon^\eps_p$ with $p\in\N$, $p>N_{\bullet}+N_{\dagger}$.\\
\newline
Pick $\eta\in[0;2\pi)$ and introduce $u^\eps(\cdot,\eta)$ an eigenfunction associated with $\Lambda_p^\eps(\eta)$. In the sequel, to simplify, we remove the subscript ${}_p$ and do not indicate the dependence on $\eta$. As a first approximation when $\eps\to0^+$, we consider the expansion
\begin{equation}\label{ExpansionTR0_2}
\Lambda^\eps=\eps^{-2}\pi^2+\nu+\dots,\qquad u^\eps(z)=v^\eps(z)+\dots
\end{equation}
with $v^\eps$ of the form
\begin{equation}\label{FField}
v^\eps(z)=\gamma^\pm(x)\,\varphi(y/\eps)\quad\mbox{ for }\pm x>0.
\end{equation}
Here the functions $\gamma^\pm$ are to determine and $\varphi$ is defined in (\ref{defUdagger}). Inserting (\ref{ExpansionTR0_2}) into Problem (\ref{PbSpectralCell}), we obtain 
\begin{equation}\label{farfield}
\begin{array}{|rcl}
\partial^2_x\gamma^++\nu\gamma^+&=&0\quad\mbox{ in }(0;1/2)\\[3pt]
\partial^2_x\gamma^-+\nu\gamma^-&=&0\quad\mbox{ in }(-1/2;0)\\[3pt]
\gamma^-(-1/2)&=&e^{i\eta}\gamma^+(+1/2)\\[3pt]
\partial_x\gamma^-(-1/2)&=&e^{i\eta}\partial_x\gamma^+(+1/2).
\end{array}
\end{equation}
To define the $\gamma^\pm$ uniquely, we need to complement (\ref{farfield}) with some conditions at the origin. To obtain them, we match the behaviour of the $\gamma^\pm$ with the one of some inner field expansion of $u^\eps$. More precisely, in a neighbourhood of the origin we look for an expansion of $u^\eps$ of the form 
\begin{equation}\label{NearField}
u^\eps(z)=W(z/\eps)+\dots
\end{equation}
with $W$ to determine. Inserting (\ref{NearField}) and (\ref{ExpansionTR0_2}) in (\ref{PbSpectralCell}), we find that $W$ must satisfy (\ref{NearFieldPb}), i.e. the near field problem at the threshold value of the continuous spectrum of $A^\Om$. Let us look for $W$ of the form 
\[
W=a_+v_++a_-v_-
\]
where $a_\pm\in\Cplx$ are constants to be set and $v_\pm$ are the functions introduced in (\ref{ScaSol}). For the $\gamma_\pm$ in (\ref{FField}), we have the Taylor series, as $x\to0$
\[
\gamma_\pm(x)=\gamma_\pm(0)+x\partial_x\gamma_\pm(0)+O(x^2).
\]
Matching the constant behaviour of the far field expansion (\ref{ExpansionTR0_2})--(\ref{FField}) with the one of the inner field expansion (\ref{NearField}) at order $\eps^0$, we find that we must impose
\begin{equation}\label{Matching1}
\begin{array}{|rcl}
\gamma^+(0)&\hspace{-0.2cm}=&\hspace{-0.2cm}a_+i(1-s_{++})-a_-is_{-+}\\[5pt]
\gamma^-(0)&\hspace{-0.2cm}=&\hspace{-0.2cm}-a_+is_{+-}+a_-i(1-s_{--})
\end{array}
\end{equation}
Introduce the notation $\boldsymbol{\gamma}(0):=(\gamma^+(0),\gamma^-(0))^\top$, $\boldsymbol{a}:=(a_+,a_-)^\top$. Then (\ref{Matching1}) rewrites as 
\begin{equation}\label{CompactForm1}
\boldsymbol{\gamma}(0)=i(\mrm{Id}-\mathbb{S})\boldsymbol{a}.
\end{equation}
Now let us match the $z$-behaviour of (\ref{ExpansionTR0_2})--(\ref{FField}) with the one of (\ref{NearField}) at order $\eps^0$. We get
\begin{equation}\label{Matching2}
\begin{array}{|rcl}
\eps\partial_x\gamma^+(0)&\hspace{-0.2cm}=&\hspace{-0.2cm}a_+(1+s_{++})+a_-s_{-+}\\[5pt]
-\eps\partial_x\gamma^-(0)&\hspace{-0.2cm}=&\hspace{-0.2cm}a_+s_{+-}+a_-(1+s_{--})
\end{array}
\end{equation}
which can be rewritten as 
\begin{equation}\label{CompactForm2}
\eps\partial_x\boldsymbol{\gamma}(0)=(\mrm{Id}+\mathbb{S})\boldsymbol{a}
\end{equation}
where $\partial_x\boldsymbol{\gamma}(0):=(\partial_x\gamma^+(0),-\partial_x\gamma^-(0))^\top$. At this stage, we have to divide the analysis according to the situation.

\subsection{Asymptotics of the spectral bands when $\mX_{\dagger}=\{0\}$}\label{ParaCasi}

In this section, we assume that $\Om$ is such that $\mX_{\dagger}=\{0\}$. Again we emphasize that this is the generic situation. Then according to Proposition \ref{PropoCharacterisation}, $\mrm{Id}+\mathbb{S}$ is invertible and relation (\ref{CompactForm2}) leads us to set $\boldsymbol{a}=0$ and so $W\equiv0$. Then, from (\ref{CompactForm1}), we find that we must impose $\boldsymbol{\gamma}(0)=0$, which gives the Dirichlet conditions 
\begin{equation}\label{farfield_BC}
\gamma^\pm(0)=0.
\end{equation}
to complement (\ref{farfield}). Solving the spectral problem (\ref{farfield}), (\ref{farfield_BC}), we obtain 
\begin{equation}\label{DefModel1D}
\nu=m^2\pi^2 \mbox{ for }m\in\N^\ast,\qquad  \begin{array}{|l}
\gamma^+(x)=\sin(m\pi x)\\[2pt]
\gamma^-(x)=-e^{i\eta}\sin(m\pi x).
\end{array}
\end{equation}
Note that $\nu$ is independent of $\eta\in[0;2\pi)$. This latter fact is not satisfactory because this does not allow us to assess the width of the spectral bands. In the sequel we wish to improve the model obtained above. Let us refine the expansion proposed in (\ref{ExpansionTR0_2}) and work with
\begin{equation}\label{ExpansionTR0_3}
\Lambda^\eps=\cfrac{\pi^2}{\eps^2}+m^2\pi^2+\eps\tilde{\nu}+\dots,\quad u^\eps(z)=(\gamma^\pm(x)+\eps\tilde{\gamma}^{\pm}(x))\,\varphi(y/\eps)+\dots\mbox{for }\pm x>0.
\end{equation}
Here $\tilde{\nu}$ as well as the $\tilde{\gamma}^{\pm}$ are to determine.  Inserting (\ref{ExpansionTR0_3}) into Problem (\ref{PbSpectralCell}) and extracting the terms of order $\eps$, we get 
\begin{equation}\label{farfield_3}
\begin{array}{|rcl}
\partial^2_x\tilde{\gamma}^++m^2\pi^2\tilde{\gamma}^+&=&-\tilde{\nu}\gamma^+\quad\mbox{ in }(0;1/2)\\[3pt]
\partial^2_x\tilde{\gamma}^-+m^2\pi^2\tilde{\gamma}^-&=&-\tilde{\nu}\gamma^-\quad\mbox{ in }(-1/2;0)\\[3pt]
\tilde{\gamma}^-(-1/2)&=&e^{i\eta}\tilde{\gamma}^+(+1/2)\\[3pt]
\partial_x\tilde{\gamma}^-(-1/2)&=&e^{i\eta}\partial_x\tilde{\gamma}^+(+1/2).
\end{array}
\end{equation}
To define properly the $\tilde{\gamma}^\pm$, we need to add to (\ref{farfield_3}) conditions at the origin. To identify them, again we match the far field expansion of $u^\eps$ with some inner field representation. In a neighbourhood of the origin, we look for an expansion of $u^\eps$ of the form 
\begin{equation}\label{NearField_3}
u^\eps(x)=\eps \tilde{W}(x/\eps)+\dots.
\end{equation}
Inserting (\ref{NearField_3}) and (\ref{ExpansionTR0_3}) in (\ref{PbSpectralCell}), we find that $\tilde{W}$ must satisfy (\ref{NearFieldPb}). We could look for $\tilde{W}$ as a combination of the linear waves $v_{\pm}$ appearing in (\ref{ScaSol}). However, it will be more convenient to work with the functions $\tilde{W}^\pm$ which satisfy (\ref{NearFieldPb}) and admit the expansions
\begin{equation}\label{DefPolMat1}
\tilde{W}^+(z)=
\begin{array}{|rl}
(x+M_{++})\,\varphi(y)+\dots&\mbox{ in }\Om\mbox{ as }x\to+\infty \\[2pt]
M_{+-}\,\varphi(y)+\dots&\mbox{ in }\Om\mbox{ as }x\to-\infty 
\end{array}\hspace{0.25cm}
\end{equation}
\begin{equation}\label{DefPolMat1Bis}
\tilde{W}^-(z)=
\begin{array}{|rl}
M_{-+}\,\varphi(y)+\dots&\mbox{ in }\Om\mbox{ as }x\to+\infty \\[2pt]
(|x|+M_{--})\,\varphi(y)+\dots&\mbox{ in }\Om\mbox{ as }x\to-\infty
\end{array}
\end{equation}
where $M_{\pm\pm}$, $M_{\pm\mp}$ are some constants. These $\tilde{W}^\pm$ can be constructed by combining the $v_{\pm}$. To proceed, one needs to use the fact that $\mrm{Id}+\mathbb{S}$ is invertible, which is a consequence of Proposition \ref{PropoCharacterisation} and of the assumption $\mX_{\dagger}=\{0\}$. In the calculus, we find, see e.g. relation (7.9) in \cite{Naza16}, that the polarization matrix
\begin{equation}\label{def_Pola_matrix}
\mathbb{M}:=\left(\begin{array}{cc}
M_{++} & M_{+-} \\[2pt]
M_{-+} & M_{--}
\end{array}\right),
\end{equation}
coincides with the Cayley transform of $\mathbb{S}$, i.e. we have
\begin{equation}\label{def_Pola_matrix}
\mathbb{M}=i(\mrm{Id}+\mathbb{S})^{-1}(\mrm{Id}-\mathbb{S}).
\end{equation}
This shows in particular that $\mathbb{M}$ is real and symmetric even in non symmetric geometries (see \cite[Chap. 5, Prop. 4.13]{NaPl94} and \cite{Naza17,Naza18,Naza23}).\\
\newline
For the functions $\gamma^{\pm}$ in (\ref{DefModel1D}), we have the Taylor expansion, as $x\to0$,
\begin{equation}\label{ExpanFarField}
\begin{array}{rcl}
\gamma^+(x)&=&0+m\pi x+\dots=\eps m\pi\,\cfrac{x}{\eps}+\dots,\\[5pt]
\gamma^-(x)&=&0-e^{i\eta}m\pi x+\dots=-\eps e^{i\eta}m\pi \,\cfrac{x}{\eps}+\dots.
\end{array}
\end{equation}
Comparing (\ref{ExpanFarField}) with (\ref{DefPolMat1}) leads us to choose $\tilde{W}$ in the expansion $u^\eps(x)=\eps \tilde{W}(x/\eps)+\dots$ (see (\ref{NearField_3})) such that
\[
\tilde{W}=m\pi \,(\tilde{W}^++e^{i\eta}\tilde{W}^-).
\]
This sets the constant behaviour of $\tilde{W}$ at infinity and we now match the later with the behaviour of the $\tilde{\gamma}^\pm$ at the origin to close system (\ref{farfield_3}). This step leads us to impose 
\begin{equation}\label{ConditionAtZero}
\tilde{\gamma}^+(0)=m\pi (M_{++}+e^{i\eta}M_{+-}); \qquad\quad \dsp \tilde{\gamma}^-(0)=m\pi (M_{-+}+e^{i\eta}M_{--}).
\end{equation}
Equations (\ref{farfield_3}), (\ref{ConditionAtZero}) form a boundary value problem for a system of ordinary differential equations. For this problem, there is a kernel and a co-cokernel. In order to have a solution, some compatibility conditions must be satisfied. Multiplying (\ref{farfield_3}) by $\overline{\gamma^\pm}$ and integrating by parts, we find that they are verified if 
\[
\overline{\partial_x\gamma^+(0)}\,\tilde{\gamma}^+(0)-\overline{\partial_x\gamma^-(0)}\,\tilde{\gamma}^-(0)=-\tilde{\nu}\left(\int_{-1/2}^{0}|\gamma^-|^2\,dx+\int_{0}^{1/2}|\gamma^+|^2\,dx\right)
\]
(note that we used that $\gamma^{\pm}(0)=0$ according to (\ref{farfield_BC})). Since $\overline{\partial_x\gamma^+(0)}=m\pi$ and $\overline{\partial_x\gamma^-(0)}=-m\pi e^{-i\eta}$, this gives
\[
2m^2\pi^2\Big((M_{++}+e^{i\eta}M_{+-})+e^{-i\eta}(M_{-+}+e^{i\eta}M_{--})\Big)=-\tilde{\nu}.
\]
By exploiting that the polarization matrix $\mathbb{M}$ defined in (\ref{def_Pola_matrix}) is real and symmetric, we get
\[
2m^2\pi^2(M_{++}+2\cos\eta\, M_{+-}+M_{--})=-\tilde{\nu}(\eta)
\]
(above we reintroduce the dependence with respect to $\eta$ for $\tilde{\nu}$). Finally, one can compute the solution to the system (\ref{farfield_3}), (\ref{ConditionAtZero}) to obtain the expressions of the $\tilde{\gamma}^\pm(\eta)$. This ends the definition of the terms appearing in the expansions (\ref{ExpansionTR0_3}). Let us comment and exploit these results.\\ 
\newline
$\star$ First, observe that the $\tilde{\nu}(\eta)$ are real because the coefficients of $\mathbb{M}$ are real.\\
\newline
$\star$ We have obtained 
\begin{equation}\label{ExpanConclu}
\Lambda^\eps(\eta)=\eps^{-2}\pi^2+m^2\pi^2+\eps\tilde{\nu}(\eta)+\dots\,.
\end{equation}
Note that in this expansion, the third term, contrary to the first two ones, depends on $\eta$ when $M_{+-}\ne0$. Since $\Upsilon^\eps_p=[a^\eps_{p-};a^\eps_{p+}]=\{\Lambda_{p}^\eps(\eta),\,\eta\in[0;2\pi)\}$, this analysis shows that we have the asymptotics
\[
a^\eps_{p\pm}=\eps^{-2}\pi^2+m^2\pi^2+\eps m^2c_{\pm}+\dots
\]
with 
\[
\begin{array}{rcl}
c_-&=&2\pi^2\min(-(M_{++}-2|M_{+-}|+M_{--}),-(M_{++}+2|M_{+-}|+M_{--}))\\[4pt]
c_+&=&2\pi^2\max(-(M_{++}-2|M_{+-}|+M_{--}),-(M_{++}+2|M_{+-}|+M_{--})).
\end{array}
\]
$\star$ Observe that we have $c_+\ne c_-$ if and only if the geometry of $\Om$ is such that $M_{+-}=M_{-+}\ne0$.\\
\newline
This proves the item $i)$ of (\ref{result_thm_3}). Finally, let us mention again that all the formal presentation above can be justified rigorously by a direct adaptation of the proofs of error estimates presented in \cite{Naza17,Naza18,Naza23}.

\subsection{Asymptotics of the spectral bands when $\dim\,\mX_{\dagger}=1$}\label{ParaDim1}

In this section, we assume that the space of almost standing waves is of dimension one (in particular $A^\Om$ has a proper TR). From Proposition \ref{PropoCharacterisation}, this is equivalent to  have $\dim(\ker\,(\mathbb{S}+\mrm{Id}))=1$. In accordance with the work of D. Grieser \cite{Grie08}, we will show that the conditions at zero to complement the system of ODEs (\ref{farfield}) are different from the Dirichlet ones found in the previous section.\\
\newline
Denote by $\tau_1=-1$, $\tau_2\ne-1$ the two eigenvalues of $\mathbb{S}$ and introduce $\boldsymbol{b}_1$, $\boldsymbol{b}_2$ two corresponding normalized eigenvector. Since $\mathbb{S}$ is unitary, there holds $(\boldsymbol{b}_1,\boldsymbol{b}_2)=0$ where $(\cdot,\cdot)$ denotes the standard inner product of $\mathbb{C}^2$.
\begin{lemma}\label{LemmaUnitary}
Since $\mathbb{S}$ is unitary and symmetric, we can choose real valued $\boldsymbol{b}_1$, $\boldsymbol{b}_2$. 
\end{lemma}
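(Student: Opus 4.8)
The plan is to use the two structural properties of $\mathbb{S}$ together in a single observation. First I would record that the symmetry $\mathbb{S}=\mathbb{S}^{\top}$ turns the unitarity relation $\mathbb{S}\overline{\mathbb{S}}^{\top}=\mrm{Id}$ into the simpler identity $\mathbb{S}\overline{\mathbb{S}}=\mrm{Id}$, so that $\overline{\mathbb{S}}=\mathbb{S}^{-1}$.

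The key step is then to show that complex conjugation maps each eigenspace of $\mathbb{S}$ to itself. Let $\boldsymbol{b}$ be an eigenvector of $\mathbb{S}$ for an eigenvalue $\tau$, i.e. $\mathbb{S}\boldsymbol{b}=\tau\boldsymbol{b}$. Conjugating gives $\overline{\mathbb{S}}\,\overline{\boldsymbol{b}}=\overline{\tau}\,\overline{\boldsymbol{b}}$, and using $\overline{\mathbb{S}}=\mathbb{S}^{-1}$ this reads $\mathbb{S}\,\overline{\boldsymbol{b}}=\overline{\tau}^{\,-1}\overline{\boldsymbol{b}}$. Since $\mathbb{S}$ is unitary its eigenvalues lie on the unit circle, so $\overline{\tau}^{\,-1}=\tau$, and therefore $\mathbb{S}\,\overline{\boldsymbol{b}}=\tau\,\overline{\boldsymbol{b}}$: the conjugate $\overline{\boldsymbol{b}}$ is an eigenvector of $\mathbb{S}$ for the \emph{same} eigenvalue $\tau$.

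Now I would invoke the hypothesis of this section: since $\tau_1=-1\ne\tau_2$, the two eigenvalues are distinct and each eigenspace is one-dimensional. Applying the previous step to $\boldsymbol{b}=\boldsymbol{b}_j$, the vector $\overline{\boldsymbol{b}_j}$ lies in the same (one-dimensional) eigenspace as $\boldsymbol{b}_j$, hence $\overline{\boldsymbol{b}_j}=c_j\,\boldsymbol{b}_j$ for some $c_j\in\Cplx$. Comparing Euclidean norms forces $|c_j|=1$, so $c_j=e^{i\theta_j}$ with $\theta_j\in\R$. Replacing $\boldsymbol{b}_j$ by the still-normalized eigenvector $e^{i\theta_j/2}\boldsymbol{b}_j$, a direct check gives $\overline{e^{i\theta_j/2}\boldsymbol{b}_j}=e^{-i\theta_j/2}c_j\,\boldsymbol{b}_j=e^{i\theta_j/2}\boldsymbol{b}_j$, so the rescaled vector is real. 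This produces real-valued normalized eigenvectors $\boldsymbol{b}_1$, $\boldsymbol{b}_2$.

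The only genuinely delicate point is the identity $\overline{\tau}^{\,-1}=\tau$, i.e. the fact that conjugation preserves each eigenvalue; this is exactly where unitarity (eigenvalues on the unit circle) and symmetry ($\overline{\mathbb{S}}=\mathbb{S}^{-1}$) are used simultaneously. Everything else is elementary linear algebra, and the simplicity of the eigenvalues $\tau_1\ne\tau_2$ guaranteed here removes any ambiguity in choosing the phases $e^{i\theta_j/2}$.
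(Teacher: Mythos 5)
Your proposal is correct and follows essentially the same route as the paper: both derive $\overline{\mathbb{S}}=\mathbb{S}^{-1}$ from unitarity plus symmetry, conjugate the eigenvector equation, and use $\overline{\tau}^{\,-1}=\tau$ to conclude that $\overline{\boldsymbol{b}_j}$ is an eigenvector for the same eigenvalue. You simply make explicit the final phase-adjustment step (choosing $e^{i\theta_j/2}\boldsymbol{b}_j$) that the paper leaves as ``this ensures $\boldsymbol{b}_1$ can be chosen in $\R^2$''.
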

\begin{proof}
Let us work for example with $\boldsymbol{b}_1$. Exploiting that $\mathbb{S}$ is unitary and symmetric, we can write $\overline{\boldsymbol{b}_1}=\mathbb{S}\overline{\mathbb{S}}^{\top}\,\overline{\boldsymbol{b}_1}=\mathbb{S}\overline{\mathbb{S}}\,\overline{\boldsymbol{b}_1}$. Inserting the identity $\mathbb{S}\boldsymbol{b}_1=\tau_1\,\boldsymbol{b}_1$ in this relation, we obtain 
\[
\overline{\boldsymbol{b}_1}=\overline{\tau_1}\,\mathbb{S}\,\overline{\boldsymbol{b}_1}.
\] 
Using that $\tau_1=\overline{\tau_1}^{-1}$, because the eigenvalues of $\mathbb{S}$ are located on the unit circle in the complex plane, we infer that $\overline{\boldsymbol{b}_1}$ is also an eigenvector associated with $\tau_1$. This ensures that $\boldsymbol{b}_1$ can be chosen in $\R^2$.
\end{proof}
\begin{remark}
If $\mathbb{S}$ has an eigenvalue of multiplicity two, which is not the case considered in this section, one can also show that the corresponding eigenvectors can be chosen in $\R^2$. Note that since $\mathbb{S}$ is unitary, it is diagonalizable, and therefore cannot have a Jordan block of size two.
\end{remark}
\noindent Exploiting Lemma \ref{LemmaUnitary}, and changing $\boldsymbol{b}_1$ in $-\boldsymbol{b}_1$ if necessary, we can introduce $\theta\in[0;\pi)$ such that 
\begin{equation}\label{defTheta}
\boldsymbol{b}_1=(\cos\theta,\sin\theta)^{\top}.
\end{equation}
Then, due to the relation $(\boldsymbol{b}_1,\boldsymbol{b}_2)=0$, we deduce that we can take $\boldsymbol{b}_2=(-\sin\theta,\cos\theta)^{\top}$.\\
\newline
This said, let us exploit the identities (\ref{CompactForm1}), (\ref{CompactForm2}). Taking the inner product of (\ref{CompactForm2}) against $\boldsymbol{b}_1$, we find
\begin{equation}\label{OnExplicite}
\eps(\partial_x\boldsymbol{\gamma}(0),\boldsymbol{b}_1)=((\mrm{Id}+\mathbb{S})\boldsymbol{a},\boldsymbol{b}_1)=(\boldsymbol{a},(\mrm{Id}+\overline{\mathbb{S}}^{\top})\boldsymbol{b}_1)=(\boldsymbol{a},(\mrm{Id}+\overline{\mathbb{S}})\boldsymbol{b}_1)=0.
\end{equation}
Therefore we obtain
\begin{equation}\label{Trans1}
\cos\theta\,\partial_x\gamma^+(0)=\sin\theta\,\partial_x\gamma^-(0).
\end{equation}
This is a first transmission condition to complement (\ref{farfield}). On the other hand, (\ref{CompactForm2}) also implies that $(\mrm{Id}+\mathbb{S})\boldsymbol{a}=0\Leftrightarrow\mathbb{S}\boldsymbol{a}=-\boldsymbol{a}$ and so $\boldsymbol{a}\in\ker(\mrm{Id}+\mathbb{S})$, i.e. $\boldsymbol{a}=c\,\boldsymbol{b}_1$ for some $c\in\Cplx$. From (\ref{CompactForm1}), this gives $\boldsymbol{\gamma}(0)=2i\boldsymbol{a}=2ic\,\boldsymbol{b}_1$, which implies $(\boldsymbol{\gamma}(0),\boldsymbol{b}_2)=0$. This provides the second transmission condition at the origin
\begin{equation}\label{Trans2}
\sin\theta\,\gamma^+(0)=\cos\theta\,\gamma^-(0).
\end{equation}
Let us summarize the results that we have obtained. Set $I:=(-1/2;1/2)$, $I_+:=(0;1/2)$ and $I_-:=(-1/2;0)$. Gathering (\ref{farfield}), (\ref{Trans1}), (\ref{Trans2}), finally we have found that $\nu$, $\gamma^\pm$ satisfy the problem with weighted Kirchhoff  transmissions conditions at the origin
\begin{equation}\label{farfieldBis}
\begin{array}{|rclrcl}
\partial^2_x\gamma^\pm+\nu\gamma^\pm&=&0\quad\mbox{ in }I_\pm\\[3pt]
\gamma^-(-1/2)&=&e^{i\eta}\gamma^+(+1/2) & \sin\theta\,\gamma^+(0)&=&\cos\theta\,\gamma^-(0)\\[3pt]
\partial_x\gamma^-(-1/2)&=&e^{i\eta}\partial_x\gamma^+(+1/2) & \qquad\cos\theta\,\partial_x\gamma^+(0)&=&\sin\theta\,\partial_x\gamma^-(0).
\end{array}
\end{equation}
If $\psi$ is a function defined on $I$, we note by $\psi_\pm$ its restriction to $I_\pm$. Let us define the Hilbert space 
\begin{equation}\label{defHspace}
\mathcal{H}^1_\eta(I):=\{\psi\,|\,\psi_\pm\in\mH^1(I_\pm),\,\sin\theta\,\psi^+(0)=\cos\theta\,\psi^-(0)\mbox{ and }\psi^-(-1/2)=e^{i\eta}\psi^+(1/2)\}.
\end{equation}
Classically, one shows that $(\nu,\gamma)\in\Cplx\times\mathcal{H}^1_\eta(I)\setminus\{0\}$ is an eigenpair of (\ref{farfieldBis}) if and only if there holds
\begin{equation}\label{EigenPb0}
\int_{I_-\cup I_+}\partial_x \gamma \partial_x \overline{\psi}\,dx =\nu\,\int_{I_-\cup I_+} \gamma \overline{\psi}\,dx,\qquad \psi\in\mathcal{H}^1_\eta(I).
\end{equation}
Therefore, for all $\eta\in[0;2\pi)$, the spectrum of (\ref{farfield}), (\ref{Trans1}), (\ref{Trans2}) is real, made of non negative eigenvalues
\[
0 \le \nu_1(\eta) \le \nu_2(\eta) \le  \dots .
\]
Let us compute them explicitly. First, we consider the case where $\nu$ is a non zero eigenvalue. Exploiting (\ref{farfield}), we find 
\[
\begin{array}{rcl}
\gamma^+(x)&=&\phantom{e^{i\eta}\big(}A\,e^{i\sqrt{\nu}(x-1/2)}+B\,e^{-i\sqrt{\nu}(x-1/2)}\\[3pt]
\gamma^-(x)&=&e^{i\eta}\big(A\,e^{i\sqrt{\nu}(x+1/2)}+B\,e^{-i\sqrt{\nu}(x+1/2)}\big).
\end{array}
\]
Writing the transmission conditions (\ref{Trans1}), (\ref{Trans2}) at the origin, we obtain that $\nu\ne0$ is an eigenvalue if and only if the matrix
\[
\left(\begin{array}{cc}
\sin\theta\,e^{-i\sqrt{\nu}/2}-\cos\theta\,e^{i\sqrt{\nu}/2}e^{i\eta} & \sin\theta\,e^{i\sqrt{\nu}/2}-\cos\theta\,e^{-i\sqrt{\nu}/2}e^{i\eta} \\[4pt]
i\sqrt{\nu}(\cos\theta\,e^{-i\sqrt{\nu}/2}-\sin\theta\,e^{i\sqrt{\nu}/2}e^{i\eta}) & i\sqrt{\nu}(-\cos\theta\,e^{i\sqrt{\nu}/2}+\sin\theta\,e^{-i\sqrt{\nu}/2}e^{i\eta})
\end{array}
\right)
\]
has a non zero kernel. Computing its determinant, we obtain that this is equivalent to have
\begin{equation}\label{Dispersion1}
\sin(2\theta)\cos\eta=\cos\sqrt{\nu}.
\end{equation}
A similar calculus shows that $\nu=0$ is an eigenvalue if and only if there holds
\begin{equation}\label{Dispersion2}
\sin(2\theta)\cos\eta=1.
\end{equation}
For $p=N_{\bullet}+N_{\dagger}+m,\,m\in\N^{\ast}$, we have obtained 
\begin{equation}\label{ExpanConclu}
\Lambda^\eps_p(\eta)=\eps^{-2}\pi^2+\nu_m(\eta)+\dots
\end{equation}
where $\nu_m(\eta)$ satisfies (\ref{Dispersion1}) or (\ref{Dispersion2}). Since $\Upsilon^\eps_p=[a^\eps_{p-};a^\eps_{p+}]=\{\Lambda_{p}^\eps(\eta),\,\eta\in[0;2\pi)\}$, this analysis shows that we have the asymptotics
\[
a^\eps_{p\pm}=\eps^{-2}\pi^2+c_{p\pm}+\dots
\]
with 
\[
c_ {p-}=\inf_{\eta\in[0;2\pi)}\nu_m(\eta),\qquad\qquad c_ {p+}=\sup_{\eta\in[0;2\pi)}\nu_m(\eta).
\]
This establishes the item $ii)$ of (\ref{result_thm_3}).\\
\newline
Let us describe the different possibilities that one can meet according to the value of $\theta$ (which depends only on the geometry of $\Om$). Define the sets
\begin{equation}\label{SpectralBandsBis}
\Xi_m(\theta):=\{\nu_m(\eta),\,\eta\in[0;2\pi)\},\qquad\qquad\aleph(\theta):=\bigcup_{m=1}^{+\infty}\Xi_m(\theta).
\end{equation}
From (\ref{Dispersion1}), (\ref{Dispersion2}), we see that there holds $\Xi_m(\theta)=\Xi_m(\pi-\theta)$. Therefore it is sufficient to study $\Xi_m(\theta)$ for $\theta\in[0;\pi/2]$. Moreover, we observe that we also have $\Xi_m(\theta)=\Xi_m(\pi/2-\theta)$. As a consequence, it is enough to compute $\Xi_m(\theta)$ for $\theta\in[0;\pi/4]$. Solving (\ref{Dispersion1}),  (\ref{Dispersion2}), we find that for $\theta=0$, the dispersion curves are flat so that $\aleph(\theta)$ is simply an unbounded sequence of points (see Figure \ref{ImageDispersionCurves} left). This is a degenerate situation. For $\theta\in(0;\pi/4)$, $\aleph(\theta)$ coincides with a union of bands separated by some gaps (Figure \ref{ImageDispersionCurves} center). Finally, for $\theta=\pi/4$, the conditions (\ref{Trans1}), (\ref{Trans2}) become classical Kirchhoff  transmission conditions (continuity of the value and continuity of the flux), so that $\aleph(\theta)=[0;+\infty)$ (Figure \ref{ImageDispersionCurves} right). This is a second degenerate situation. Let us emphasize that though there holds $\aleph(\theta)=[0;+\infty)$ in this latter case, one may still have some gaps in the spectrum of $A^\eps$ above $\pi^2/\eps^2$. To study them, and possibly show their existence, it is necessary to work with higher order models. This will be done in Section \ref{SectionGapsUsualTR}.

\begin{figure}[!ht]
\centering
\includegraphics[width=16cm,trim={3cm 21cm 3cm 2.4cm},clip]{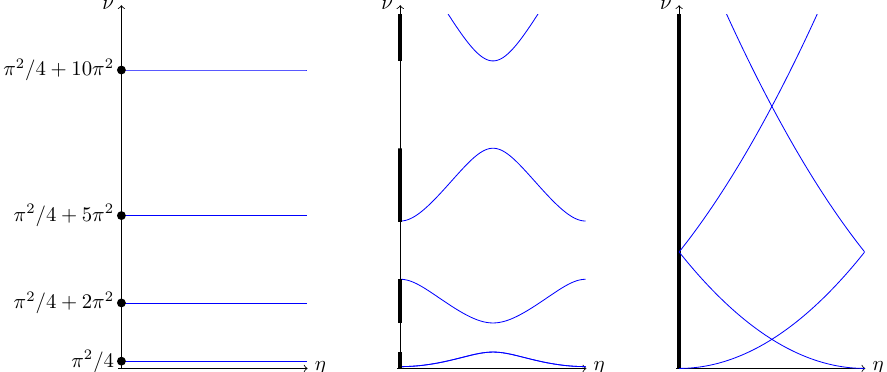}
\caption{Dispersion curves (\ref{Dispersion1}) for $\theta=0$ (left), $\theta=\pi/8$ (center), $\theta=\pi/4$ (right).\label{ImageDispersionCurves}}
\end{figure}

\begin{remark}\label{RmkRefStrip}
Let us discuss here the simple case where $\Om$ is the reference strip $\R\times(-1/2;1/2)$. In that situation, one observes that the functions $v_\pm$ introduced in (\ref{ScaSol}) are given by 
\[
v_+(z)=(x+i)\varphi(y),\qquad\qquad v_-(z)=(-x+i)\varphi(y).
\]
As a consequence, we have $s_{++}=s_{--}=0$, $s_{\pm\mp}=-1$ and so 
\[
\mathbb{S}:=\left(
\begin{array}{cc}
0 & -1 \\[2pt]
-1 & 0
\end{array}
\right).
\]
The eigenvalues of $\mathbb{S}$ are $\pm1$ and we can take $\boldsymbol{b}_1=(1/\sqrt{2},1/\sqrt{2})^{\top}$, $\boldsymbol{b}_2=(-1/\sqrt{2},1/\sqrt{2})^{\top}$. We deduce that the transmission conditions (\ref{Trans1}) and (\ref{Trans2}) write
\[
\begin{array}{|rcl}
\gamma^+(0)&=&\gamma^-(0)\\[3pt]
\partial_x\gamma^+(0)&=&\partial_x\gamma^-(0).
\end{array}
\]
This was expected because in this simple geometry, the origin plays no particular role for the spectral problem (\ref{PbSpectralCell}). We find that the angle $\theta$ in (\ref{defTheta}) is equal to $\pi/4$. Therefore, the quantity $\aleph(\theta)$ in (\ref{SpectralBandsBis}) is equal to $[0;+\infty)$, which was also expected. Indeed, in this situation, the periodic domain $\Pi^\eps$ coincides with the thin strip $\R\times(-\eps/2;\eps/2)$ and we know that the spectrum of the Dirichlet Laplacian in this geometry is $[\pi^2/\eps^2;+\infty)$ (observe that there is no gap).
\end{remark}

\subsection{Asymptotics of the spectral bands when $\dim\,\mX_{\dagger}=2$}

Finally, we consider the hypothetical case where the geometry $\Om$ is such that $\dim\,\mX_{\dagger}=2$ (again, this is an open question to show the existence of such a geometry). According to Proposition \ref{PropoCharacterisation}, this is equivalent to assume that $\dim(\ker(\mrm{Id}+\mathbb{S}))=2$. In that situation, in accordance with \cite{Grie08}, we find that the system of ODEs (\ref{farfield}) must be complemented with homogeneous Neumann boundary conditions at the origin. Indeed, working as in (\ref{OnExplicite}), we obtain that we must impose $(\partial_x\boldsymbol{\gamma}(0),\boldsymbol{d})=0$ for all $\boldsymbol{d}\in\Cplx^2$, i.e. 
\begin{equation}\label{NeumannOrigin}
\partial_x\gamma^+(0)=\partial_x\gamma^-(0)=0.
\end{equation}
Solving (\ref{farfield}), (\ref{NeumannOrigin}), we obtain 
\begin{equation}\label{DefModel1D}
\nu=m^2\pi^2 \mbox{ for }m\in\N,\qquad\quad  \begin{array}{|l}
\gamma^+(x)=\cos(m\pi x)\\[2pt]
\gamma^-(x)=-e^{i\eta}\cos(m\pi x).
\end{array}
\end{equation}
Note that $\nu$ is independent of $\eta\in[0;2\pi)$. As in \S\ref{ParaCasi}, this first model is not enough for our needs because it does not provide estimates for the lengths of the spectral bands. But working as in \S\ref{ParaCasi}, we can improve it to establish the item $iii)$ of (\ref{result_thm_3}).

\section{A model problem for the breathing of spectral bands}\label{SectionModel} 
  
In this section, our goal is to propose a model which describes the change of the spectrum of $A^\eps$ when perturbing the near field geometry around a particular $\Om=\Om_\star$ where $\dim\,\mX_\dagger=1$. Let us emphasize that this leads to make a periodic perturbation of $\Pi^\eps$.

\subsection{Setting and main result}

Let us start from a geometry $\Om_\star$ such that $\dim\,\mX_\dagger=1$ (we will explain in Section \ref{SectionNFgeom} how to construct particular examples). To simplify the presentation, we assume that we also have $\mX_{\mrm{bo}}=\{0\}$. As in (\ref{DefMu1}), we denote by $N_{\bullet}$ the number of eigenvalues of $A^{\Om_\star}$ below the continuous spectrum. Now we perturb this waveguide. To proceed, consider $\Gamma$ a smooth bounded and connected part of $\partial\Om_\star$. Denote by $s$ the arc length on $\Gamma$ so that $\Gamma=\{P(s)\in\R^2\,|\,s\in J\}$ where $J$ is a given interval of $\R$. Let $h\in\mathscr{C}^{\infty}_0(J)$ be a smooth profile function such that $h\ge0$ and $h\not\equiv0$. For some given parameter $\rho\in\R$ that we can choose as we wish, we define the geometry $\Om^{\rho,\eps}$ such that outside of $\mathscr{V}$, $\partial\Om^{\rho,\eps}=\partial\Om$ and inside $\mathscr{V}$,  $\partial\Om^{\rho,\eps}$ coincides with 
\begin{equation}\label{DefGeom}
\Gamma^{\rho,\eps}:=\{P(s)+\eps \rho h(s)n(s)\,|\,s\in J\}.
\end{equation}
Here $n(s)$ is the outward unit normal vector to $\Gamma$ at point $P(s)$. Note that here the inner field geometry also depends on $\eps$. By (\ref{defCell}), this defines a family of unit cells and periodic waveguides that we denote respectively $\om^{\rho,\eps}$ and $\Pi^{\rho,\eps}$ to stress the dependence in $\rho$. Observe that with this notation, we have in particular $\om^{0,\eps}=\om^{\eps}$, $\Pi^{0,\eps}=\Pi^{\eps}$.
Our goal is to compute, for a fixed $\rho\in\R$, the asymptotics of the spectral bands $\Upsilon^{\rho,\eps}_p:=\{\Lambda_p^{\rho,\eps}(\eta),\,\eta\in[0;2\pi)\}$ of the Dirichlet Laplacian in $\Pi^{\rho,\eps}$. The study of the first $N_\bullet$ spectral bands has no particular interest compared to what has been done in Section \ref{SectionFirstBands} and so we focus our attention on the case $p>N_\bullet$. The next theorem constitutes the second important result of this article.
\begin{theorem}\label{MainThmPerioMigration}
Fix $\rho\in\R$. For $m\in\N^{\ast}$ and $p=N_{\bullet}+m$, let $\Upsilon^{\rho,\eps}_p=[a^{\rho,\eps}_{m-};a^{\rho,\eps}_{m+}]$, with $a^{\rho,\eps}_{m-}\le a^{\rho,\eps}_{m+}$, be the spectral band as defined above.  
There are some (real) constants $c^\rho_{m-}<c^\rho_{m+}$, $C_{m}>0$, $\delta_m>0$ and $\eps_m>0$ such that we have \\[6pt]
\begin{equation}\label{result_thm_migration}
\Big|a^{\rho,\eps}_{m\pm}-\Big(\eps^{-2}\pi^2+c^\rho_{m\pm}\Big)\Big|\le C_{m}\,\eps^{1+\delta_m},\qquad\forall\eps\in(0;\eps_m].
\end{equation}
Here the constants $C_m$, $\eps_m$ can be chosen independently of $\rho$ if we impose to $\rho$ to belong to a compact set of $\R$. Moreover, we have
\begin{equation}\label{result_thm_migration_2}
\lim_{\rho\to-\infty} c^\rho_{m\pm}=m^2\pi^2,\qquad\qquad 
 c^\rho_{1\pm}\underset{\rho\to+\infty}{\sim}-\cfrac{T^2}{4}\,\rho^2,\qquad\qquad 
\lim_{\rho\to+\infty} c^\rho_{(m+1)\pm}=m^2\pi^2
\end{equation}
with 
\[
c^\rho_{m+}-c^\rho_{m-}\underset{\rho\to-\infty}{=}O(1/\rho),\qquad c^\rho_{1+}-c^\rho_{1-}\underset{\rho\to+\infty}{=}O(e^{-\delta\rho}),\qquad c^\rho_{(m+1)+}-c^\rho_{(m+1)-}\underset{\rho\to+\infty}{=}O(1/\rho).
\]
Here $T>0$ which depends on $h$ is defined in (\ref{def_T}) and $\delta>0$ is a constant.
\end{theorem}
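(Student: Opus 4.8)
The plan is to re-run the matched-asymptotics scheme of Section \ref{ParaDim1} for the perturbed near-field geometry $\Om^{\rho,\eps}$, the one new ingredient being that its threshold scattering matrix now depends on $\eps$ (and on $\rho$) through the boundary deformation $\Gamma^{\rho,\eps}$ of (\ref{DefGeom}). Since that deformation has amplitude $\eps\rho$ in the near-field variables, the corresponding scattering matrix $\mathbb{S}^{\rho,\eps}$ tends to $\mathbb{S}_\star$, the threshold scattering matrix of $\Om_\star$, as $\eps\to0^+$; by hypothesis $\mathbb{S}_\star$ carries the simple eigenvalue $\tau_1=-1$ with real eigenvector $\boldsymbol{b}_1=(\cos\theta,\sin\theta)^\top$ as in (\ref{defTheta}). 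The entire mechanism rests on quantifying how this eigenvalue leaves the point $-1$ under the perturbation.

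First I would establish the perturbed eigenvalue expansion. Writing $\mathbb{S}^{\rho,\eps}\boldsymbol{b}_1^{\rho,\eps}=\tau_1^{\rho,\eps}\boldsymbol{b}_1^{\rho,\eps}$ with $\tau_1^{\rho,\eps}=-e^{i\alpha^{\rho,\eps}}$ (unitarity keeps the eigenvalue on the unit circle), a Hadamard-type shape-derivative computation gives $\alpha^{\rho,\eps}=-\eps\rho\,T+O(\eps^2)$, where $T>0$ is the constant of (\ref{def_T}), a boundary integral built from the proper threshold resonance $W_\star$ spanning $\mX_\dagger$ for $\Om_\star$ and from $h$; its positivity follows from $h\ge0$, $h\not\equiv0$ together with $\partial_n W_\star\not\equiv0$ on $\supp h$ (unique continuation, as $W_\star$ has vanishing Cauchy data otherwise), and the sign is what makes $\rho>0$ the inflating, diving direction. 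Feeding $\mathbb{S}=\mathbb{S}^{\rho,\eps}$ into the matching identities (\ref{CompactForm1}), (\ref{CompactForm2}) and projecting onto $\boldsymbol{b}_1,\boldsymbol{b}_2$ exactly as in (\ref{OnExplicite})--(\ref{Trans2}), the $\boldsymbol{b}_2$-component still yields the Dirichlet-type condition (\ref{Trans2}), namely $(\boldsymbol{\gamma}(0),\boldsymbol{b}_2)=0$, whereas the $\boldsymbol{b}_1$-component gives, instead of the Neumann-type (\ref{Trans1}) ($(\partial_x\boldsymbol{\gamma}(0),\boldsymbol{b}_1)=0$), the Robin condition $(\boldsymbol{\gamma}(0),\boldsymbol{b}_1)=\kappa\,(\partial_x\boldsymbol{\gamma}(0),\boldsymbol{b}_1)$ with $\kappa=\eps\cot(\alpha^{\rho,\eps}/2)=-2/(\rho T)+O(\eps)$. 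This is the promised $\rho$-dependent vertex condition, interpolating between the Neumann endpoint (\ref{Trans1}) at $\rho=0$ (where $\kappa=\infty$) and the Dirichlet endpoint of \S\ref{ParaCasi} as $|\rho|\to\infty$ (where $\kappa\to0$).

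Next I would solve the one-dimensional model $\partial^2_x\gamma^\pm+\nu\gamma^\pm=0$ on $I_\pm$ with the quasi-periodic conditions of (\ref{farfield}) and the two vertex conditions above, set $c^\rho_{m-}=\inf_\eta\nu_m(\eta)$ and $c^\rho_{m+}=\sup_\eta\nu_m(\eta)$, and read off the limits. For the positive part of the spectrum, as $\kappa\to0$ the Robin condition degenerates to Dirichlet and, with (\ref{Trans2}), forces $\gamma^\pm(0)=0$, recovering the flat bands $\nu=m^2\pi^2$ of (\ref{DefModel1D}); first-order perturbation in the small parameter $\kappa$ then gives $c^\rho_{m\pm}\to m^2\pi^2$ with width $O(|\kappa|)=O(1/\rho)$, for either sign of $\rho$. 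The decisive asymmetry comes from the sign of $\kappa$: for a node-localized state $\nu=-\sigma^2<0$ decaying away from the origin one has $\partial_x\boldsymbol{\gamma}(0)=-\sigma\boldsymbol{\gamma}(0)$ (recall the sign convention $\partial_x\boldsymbol{\gamma}(0)=(\partial_x\gamma^+(0),-\partial_x\gamma^-(0))^\top$), so the Robin condition reduces to $1=-\kappa\sigma$, i.e. $\sigma=-1/\kappa=\rho T/2$. This root is positive only for $\rho>0$, producing the diving band $\nu_1(\eta)=-\sigma^2+O(e^{-\sigma})=-T^2\rho^2/4+\dots$ whose $\eta$-dependence is a tight-binding tunneling term of size $O(e^{-\sigma})=O(e^{-\delta\rho})$ with $\delta=T/2$, hence the exponentially short width. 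For $\rho<0$ no such root exists, so no band dives and all positive bands behave as above; this explains the re-indexing $c^\rho_{(m+1)\pm}\to m^2\pi^2$ as $\rho\to+\infty$ against $c^\rho_{m\pm}\to m^2\pi^2$ as $\rho\to-\infty$, as well as $c^\rho_{1\pm}\sim-T^2\rho^2/4$.

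Finally, the estimate (\ref{result_thm_migration}) and the uniformity of $C_m,\eps_m$ on compact $\rho$-sets follow from the general justification of such matched expansions in \cite{Naza17,Naza18,Naza23}: on a fixed compact the perturbation parameter $\eps\rho$ is uniformly small, so the expansion of $\mathbb{S}^{\rho,\eps}$, and thus of the vertex condition, is uniform. I expect the main obstacle to be the second step: making $\alpha^{\rho,\eps}=-\eps\rho T+O(\eps^2)$ rigorous and pinning down $T$ with its sign. The difficulty is that the near-field domain itself depends on $\eps$, so one cannot simply invoke analytic perturbation of a fixed operator; one must couple the Hadamard formula for the shift of the threshold resonance under the variation $\eps\rho h\,n$ with the relation (\ref{def_Pola_matrix}) between $\mathbb{S}$ and its Cayley transform, and verify that the remainders stay $o(\eps)$ relative to the leading $\eps\rho T$, uniformly in $\rho$ on compacts. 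Once this expansion and the non-degeneracy $c^\rho_{m-}<c^\rho_{m+}$ (valid for generic $\theta$ and finite $\rho\neq0$) are secured, the remaining band-edge analysis is the explicit ODE computation sketched above, from which the full breathing profile for intermediate $\rho$ is read off directly.
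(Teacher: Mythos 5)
Your proposal is correct and follows essentially the same route as the paper: you perturb the eigenvalue $-1$ of the threshold scattering matrix under the boundary variation $\eps\rho h\,n$ (this is exactly the paper's Proposition \ref{PropositionCounterclock}, giving $\tau_1^\eps=-1-iT\eps\rho+O(\eps^2)$ with $T$ as in (\ref{def_T})), which after projection onto $\boldsymbol{b}_1,\boldsymbol{b}_2$ yields precisely the Robin-type vertex condition $(\boldsymbol{\gamma}(0),\boldsymbol{b}_1)=-\tfrac{2}{T\rho}(\partial_x\boldsymbol{\gamma}(0),\boldsymbol{b}_1)$ together with $(\boldsymbol{\gamma}(0),\boldsymbol{b}_2)=0$, i.e. the paper's conditions (\ref{TK}). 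Your subsequent band-edge analysis (Dirichlet limit with $O(1/\rho)$ widths as $|\rho|\to\infty$, the vertex-localized mode with $\sigma=\rho T/2$ existing only for $\rho>0$ and giving $\nu_1\sim-T^2\rho^2/4$ with exponentially small width) matches the paper's computation via the dispersion relation (\ref{Dispersion1Bis}).
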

\noindent Let us comment this statement. Note first that all the asymptotic intervals $[c^\rho_{m-};c^\rho_{m+}]$ reduce to a singleton when $\pm\rho\to+\infty$. This describes the transition between cases $i)$ and $ii)$ in Theorem \ref{MainThmPerio}. Some illustrations of the dependence of the $[c^\rho_{m-};c^\rho_{m+}]$, that we denote by $\Xi^\rho_m(\theta)$ in the sequel, with respect to $\rho$ are given in Figures \ref{ImageAsymptoRho}, \ref{ImageAsymptoRho1} below. We can see that when $\rho$ runs from $-\infty$ to $+\infty$, which boils down to inflating the near field geometry around $\Om_\star$, the $[c^\rho_{m-};c^\rho_{m+}]$ expand and then shrink. This is what we call the breathing phenomenon of the spectrum of $A^\eps$.
In the process, in $\sigma(A^\eps)$ a band dives below $\pi^2/\eps^2$ and then stops breathing. It is approximated when $\eps\to0^+$ by the interval $[\eps^{-2}\pi^2+c^\rho_{1-};\eps^{-2}\pi^2+c^\rho_{1+}]$ which becomes extremely short when $\rho\to+\infty$. We emphasize that in order these results to be valid for $\pm\rho$ large, $\eps$ must be chosen small enough so that $\Om^{\rho,\eps}$ remains a small perturbation of $\Om_\star$. Typically, for a given $\eps>0$, we can take $\rho\in(-\eps^{-1/2};\eps^{-1/2})$.

\subsection{Proof of Theorem \ref{MainThmPerioMigration}}
\noindent Fix $\rho\in\R$ and pick $p=N_{\bullet}+m$ with $m\in\N^{\ast}$. We wish to compute an asymptotic expansion of $\Lambda_p^{\rho,\eps}(\eta)$ as $\eps\to0^+$. To simplify the presentation of the computations below, we remove the indices $\rho$, $\eta$ and $p$. Reproducing what has been done in Section \ref{SectionAsymptoHigher}, as an approximation when $\eps\to0^+$, we consider the expansions
\begin{equation}\label{ExpansionTR0_2Bis}
\Lambda^\eps=\eps^{-2}\pi^2+\nu+\dots,\qquad u^\eps(z)=v^\eps(z)+\dots
\end{equation}
with $v^\eps$ of the form
\begin{equation}\label{FFieldBis}
v^\eps(z)=\gamma^\pm(x)\,\varphi(y/\eps)\quad\mbox{ for }\pm x>0.
\end{equation}
We find that $\gamma^\pm$ must satisfy the equations (\ref{farfield}) that it remains to complement with conditions at the origin. As in (\ref{NearField}), we look for an expansion of $u^\eps$ in a neighbourhood of the origin of the form 
\[
u^\eps(z)=W(z/\eps)+\dots\qquad\mbox{ with }\qquad W=a_+v_++a_-v_-.
\]
Here $a_\pm\in\Cplx$ are constants to determine and $v_\pm$ are the functions introduced in (\ref{ScaSol}). Then the conditions (\ref{CompactForm1}), (\ref{CompactForm2}) write
\begin{equation}\label{CompactForm1Bis}
\hspace{0.7cm}\boldsymbol{\gamma}(0)=i(\mrm{Id}-\mathbb{S}^{\rho,\eps})\boldsymbol{a}
\end{equation}
\begin{equation}\label{CompactForm2Bis}
\eps\partial_x\boldsymbol{\gamma}(0)=(\mrm{Id}+\mathbb{S}^{\rho,\eps})\boldsymbol{a}
\end{equation}
where $\boldsymbol{a}=(a_+,a_-)^\top$. Above, to facilitate the understanding, we indicate the dependence of the scattering matrix with respect to the geometry. Denote by $\tau_1^\eps$, $\tau_2^\eps$ the two eigenvalues of $\mathbb{S}^{\rho,\eps}$ and introduce $\boldsymbol{b}^\eps_1$, $\boldsymbol{b}^\eps_2$ two corresponding normalized eigenvectors. As we have seen, $\boldsymbol{b}^\eps_1$, $\boldsymbol{b}^\eps_2$ are orthogonal to each other and can be chosen to be real valued (Lemma \ref{LemmaUnitary}). For $\boldsymbol{a}$, we have the decomposition 
\[
\boldsymbol{a}=(\boldsymbol{a},\boldsymbol{b}^\eps_1)\,\boldsymbol{b}^\eps_1+(\boldsymbol{a},\boldsymbol{b}^\eps_2)\,\boldsymbol{b}^\eps_2.
\] 
We deduce that there holds 
\begin{equation}\label{PropertySR}
\mathbb{S}^{\rho,\eps}\boldsymbol{a}=\tau_1^\eps(\boldsymbol{a},\boldsymbol{b}^\eps_1)\,\boldsymbol{b}^\eps_1+\tau_2^\eps(\boldsymbol{a},\boldsymbol{b}^\eps_2)\,\boldsymbol{b}^\eps_2.
\end{equation}
For the eigenvalues $\tau_1^\eps$, $\tau_2^\eps$, we will show in Proposition \ref{PropositionCounterclock} below the asymptotics 
\begin{equation}\label{asympto_beta}
\tau_1^\eps=-1-iT\eps\rho+O(\eps^2),\qquad\qquad \tau_2^\eps=\tau_2+O(\eps).
\end{equation}
Here
\begin{equation}\label{def_T}
T=\cfrac{1}{2}\,\int_{\Gamma} h|\partial_nv|^2\,ds>0
\end{equation}
($v\not\equiv0$ on $\Gamma$ is defined in (\ref{Decompov})) and $\tau_2$ is the eigenvalue of $\mathbb{S}$, the threshold scattering matrix in $\Om_\star$, which is different from $-1$. Taking the inner product of (\ref{CompactForm2Bis}) against $\boldsymbol{b}^\eps_1$ and using (\ref{PropertySR}), we get $(\boldsymbol{a},\boldsymbol{b}^\eps_1)=\eps(\partial_x\boldsymbol{\gamma}(0),\boldsymbol{b}^\eps_1)-\tau_1^\eps(\boldsymbol{a},\boldsymbol{b}^\eps_1)$ and so $(\boldsymbol{a},\boldsymbol{b}^\eps_1)=(1+\tau_1^\eps)^{-1}\eps(\partial_x\boldsymbol{\gamma}(0),\boldsymbol{b}^\eps_1)$. Exploiting (\ref{asympto_beta}) and identifying the terms of order $\eps^0$ in the expansions
\[
(\boldsymbol{a},\boldsymbol{b}^\eps_1)=(\boldsymbol{a},\boldsymbol{b}_1)+O(\eps),\qquad\qquad (1+\tau_1^\eps)^{-1}\eps(\partial_x\boldsymbol{\gamma}(0),\boldsymbol{b}^\eps_1)=\cfrac{i}{T\rho}(\partial_x\boldsymbol{\gamma}(0),\boldsymbol{b}_1)+O(\eps),
\]
we obtain
\[
(\boldsymbol{a},\boldsymbol{b}_1)=\cfrac{i}{T\rho}\,(\partial_x\boldsymbol{\gamma}(0),\boldsymbol{b}_1).
\]
Similarly, we find $(\boldsymbol{a},\boldsymbol{b}_2)=0$. Therefore, we get 
\begin{equation}\label{Expressiona}
\boldsymbol{a}=\cfrac{i}{T\rho}\,(\partial_x\boldsymbol{\gamma}(0),\boldsymbol{b}_1)\,\boldsymbol{b}_1.
\end{equation}
Finally, inserting (\ref{Expressiona}) into (\ref{CompactForm1Bis}), we find that this leads us to complement (\ref{farfield}) with the conditions
\begin{equation}\label{TconditionsSys}
\boldsymbol{\gamma}(0)=-\cfrac{2}{T\rho}\,(\partial_x\boldsymbol{\gamma}(0),\boldsymbol{b}_1)\,\boldsymbol{b}_1.
\end{equation}
Using as in (\ref{defTheta}) the notation $\boldsymbol{b}_1=(\cos\theta,\sin\theta)^{\top}$ with $\theta\in[0;\pi)$, this writes componentwise
\begin{equation}\label{Tconditions}
\begin{array}{|rcl}
\gamma^+(0)&=&-\cfrac{2}{T\rho}\,\big((\cos\theta)^2\,\partial_x\gamma^+(0)-\sin\theta\cos\theta\,\partial_x\gamma^-(0)\big)\\[3pt]
\gamma^-(0)&=&-\cfrac{2}{T\rho}\,\big(\sin\theta\cos\theta\,\partial_x\gamma^+(0)-(\sin\theta)^2\,\partial_x\gamma^-(0)\big).
\end{array}
\end{equation}
Recombining (\ref{Tconditions}), one finds that it is equivalent to the  weighted Kirchhoff transmission conditions 
\begin{equation}\label{TK}
\begin{array}{|rcl}
\sin\theta\,\gamma^+(0)-\cos\theta\,\gamma^-(0)&=&0\\[3pt]
\cos\theta\,\partial_x\gamma^+(0)-\sin\theta\,\partial_x\gamma^-(0)&=&-\cfrac{T\rho}{2}\,(\cos\theta\,\gamma^+(0)+\sin\theta\,\gamma^-(0)).
\end{array}
\end{equation}
From (\ref{TK}), when $\rho\to\pm\infty$, asymptotically, we find back the homogeneous Dirichlet conditions 
\[
\gamma^\pm(0)=0.
\]
On the other hand, from (\ref{TK}), when $\rho\to0$, we get 
\begin{equation}\label{TconditionsKirch}
\begin{array}{|rcl}
\sin\theta\,\gamma^+(0)&=&\cos\theta\,\gamma^-(0)\\[3pt]
\cos\theta\,\partial_x\gamma^+(0)&=&\sin\theta\,\partial_x\gamma^-(0)
\end{array}
\end{equation}
as in (\ref{Trans1}), (\ref{Trans2}). Let us study in more details the spectrum of (\ref{farfield}), (\ref{TK}) with respect to $\rho\in\R$. Classically, one finds that $(\nu,\gamma)\in\Cplx\times\mathcal{H}^1_\eta(I)\setminus\{0\}$ (see (\ref{defHspace}) for the definition of the Hilbert space $\mathcal{H}^1_\eta(I)$ which incorporates the first transmission condition of (\ref{TconditionsKirch})) is an eigenpair of (\ref{farfield}), (\ref{TK}) if and only if there holds
\begin{equation}\label{EigenPb1}
\begin{array}{l}
\dsp\int_{I_-\cup I_+}\partial_x \gamma \partial_x \overline{\psi}\,dx -\cfrac{T\rho}{2}\,\bigg(\cos\theta\,\gamma_+(0)+\sin\theta\,\gamma_-(0)\bigg)\bigg(\cos\theta\overline{\psi_+(0)}+\sin\theta\overline{\psi_-(0)}\bigg)\\[12pt]
\hspace{6cm}=\dsp\nu\,\int_{I_-\cup I_+} \gamma \overline{\psi}\,dx,\qquad \psi\in\mathcal{H}^1_\eta(I).
\end{array}
\end{equation}
Therefore, we see that the spectrum of (\ref{farfield}), (\ref{TK}) is made of a sequence of real eigenvalues 
\begin{equation}\label{defEigenval}
\nu^{\rho}_1(\eta) \le \nu^{\rho}_2(\eta) \le  \dots \le \nu^{\rho}_m(\eta) \le \dots
\end{equation}
which is bounded from below and which accumulates only at $+\infty$. Note that we indicate again the dependence with respect to $\rho\in\R$. For $\rho\le0$, from (\ref{EigenPb1}), we see that $\nu^\rho_1(\eta)\ge0$. However when $\rho\ge0$, one can have $\nu^\rho_1(\eta)\le0$ as we will see below. Working as in (\ref{Dispersion1}), we can characterize more explicitly the $\nu^\rho_p(\eta)$. First we find that $\nu$ is a non-zero eigenvalue of (\ref{farfield}), (\ref{TK}) if and only if there holds 
\begin{equation}\label{Dispersion1Bis}
\sin(2\theta)\cos\eta=\cos\sqrt{\nu}-\cfrac{T\rho}{2}\,\cfrac{\sin\sqrt{\nu}}{\sqrt{\nu}}\,.
\end{equation} 
On the other hand, we obtain that zero is an eigenvalue if and only if we have
\begin{equation}\label{Dispersion2Bis}
\sin(2\theta)\cos\eta=1-\cfrac{T\rho}{2}\,.
\end{equation} 
Remark that for $\rho=0$, these relations coincide with (\ref{Dispersion1}), (\ref{Dispersion2}), which was expected. For $p=N_{\bullet}+m,\,m\in\N^{\ast}$, we have obtained 
\begin{equation}\label{ExpanConcluBis}
\Lambda^{\rho,\eps}_p(\eta)=\eps^{-2}\pi^2+\nu^\rho_m(\eta)+\dots
\end{equation}
where $\nu^\rho_m(\eta)$ solves (\ref{Dispersion1Bis}) or (\ref{Dispersion2Bis}). This analysis shows that for the spectral bands $\Upsilon^{\rho,\eps}_p=[a^{\rho,\eps}_{m-};a^{\rho,\eps}_{m+}]=\{\Lambda_{p}^{\rho,\eps}(\eta),\,\eta\in[0;2\pi)\}$, we have the asymptotics
\[
a^{\rho,\eps}_{m\pm}=\eps^{-2}\pi^2+c^\rho_{m\pm}+\dots
\]
with 
\[
c^\rho_{m-}=\inf_{\eta\in[0;2\pi)}\nu^\rho_m(\eta),\qquad\qquad c^\rho_{m+}=\sup_{\eta\in[0;2\pi)}\nu^\rho_m(\eta).
\]
This gives (\ref{result_thm_migration}). Finally, we establish (\ref{result_thm_migration_2}). Let us detail how to proceed to obtain the asymptotics for $c^\rho_{1\pm}$. Let $\gamma\in\mathcal{H}^1_\eta(I)\setminus\{0\}$ be an eigenfunction of (\ref{EigenPb1}) associated with $\nu_1^\rho(\eta)$. 
Consider the ansatz, for $\rho>0$ large,
\[
\nu_1^\rho(\eta)=-\alpha^2\,\rho^2+\dots,\qquad\qquad
\begin{array}{|l}
\gamma^+(x)=A\,\cos\theta\,e^{-\alpha \rho x}+\dots,\quad x>0 \\[3pt]
\gamma^-(x)=A\,\sin\theta\,e^{+\alpha \rho x}+\dots,\quad x<0 ,
\end{array}
\]
where $\alpha$, $A$ are constants and the dots correspond to small remainders. Note that with this choice, the first two equations of (\ref{farfield}) and the first one of (\ref{TK}) are satisfied. Imposing the second condition of (\ref{TK}) yields
\[
-((\cos\theta)^2+(\sin\theta)^2)\,A \alpha \rho = -\cfrac{T\rho}{2}\,((\cos\theta)^2+(\sin\theta)^2)\,A.
\]
This leads us to take $\alpha=T/2$. Besides, since the quantities $\gamma^\pm(\pm1/2)$, $\partial_x\gamma^\pm(\pm1/2)$ decay exponentially as $\rho\to+\infty$, the discrepancy in the quasi-periodic conditions of (\ref{farfield}) is $O(e^{-\delta\rho})$ with $\delta>0$. This proves the formula $c^\rho_{1\pm}\underset{\rho\to+\infty}{\sim}-T^2\rho^2/4$ of (\ref{result_thm_migration_2}). Additionally, this guarantees that the segment $[c^\rho_{1-};c^\rho_{1+}]$ becomes exponentially short as $\rho\to+\infty$. The justification of these calculus is classical. Now in (\ref{result_thm_migration_2}) we turn our attention to the asymptotics of the $c^\rho_{m\pm}$, $m\in\N^\ast$, as $\rho\to-\infty$. For $(\nu_m^\rho(\eta),\gamma)$ an eigenpair of (\ref{EigenPb1}), one can show the expansions
\[
\nu_m^\rho(\eta)=m^2\pi^2+\dots,\qquad\qquad
\begin{array}{|l}
\gamma^+(x)=A\,\sin(m\pi x)+\dots,\quad x>0 \\[3pt]
\gamma^-(x)=-A\,e^{i\eta}\sin(m\pi x)+\dots,\quad x<0 ,
\end{array}
\] 
where $A$ is a constant and the dots stand for terms in $O(1/\rho)$. Observe that this implies that the length of the segment $[c^\rho_{m-};c^\rho_{m+}]$ behaves as $O(1/\rho)$ when $\rho\to-\infty$. As a consequence, it is larger than $[c^\rho_{1-};c^\rho_{1+}]$. One works similarly on the asymptotics of $c^\rho_{(m+1)\pm}$ as $\rho\to+\infty$, $m\in\N^\ast$.\\
\newline
Note that for the particular Floquet-Bloch parameter $\eta=\pi/2$, from the  the explicit dispersion relations (\ref{Dispersion1Bis}), (\ref{Dispersion2Bis}), wee that the $\nu^\rho_m(\pi/2)$ coincide with the real numbers $t$ such that 
\[
\cfrac{\rho}{\sqrt{t}}\,\tan \sqrt{t}=\cfrac{2}{T}
\]
when $\rho\ne0$. This clearly implies 
\[
\lim_{\rho\to-\infty}\nu^\rho_m(\pi/2)=m^2\pi^2,\qquad\qquad\mbox{ and }\qquad\qquad\lim_{\rho\to+\infty}\nu^\rho_ {m+1}(\pi/2)=m^2\pi^2.
\]

\subsection{Illustration of the results}

As in (\ref{SpectralBandsBis}), we introduce the sets
\[
\Xi^\rho_m(\theta):=[c^\rho_{m-};c^\rho_{m+}]=\{\nu^\rho_m(\eta),\,\eta\in[0;2\pi)\},\qquad\qquad\aleph^\rho(\theta):=\bigcup_{m=1}^{+\infty}\Xi^\rho_m(\theta).
\]
In Figures \ref{ImageAsymptoRho}, \ref{ImageAsymptoRho1}, we display the $\Xi^\rho_m(\theta)$ with respect to $\rho$. In Figure \ref{ImageAsymptoRho}, we take $\sin(2\theta)=0.7$ while we impose $\sin(2\theta)=1$ in Figure \ref{ImageAsymptoRho1}. On the other hand, $T$ is arbitrarily chosen equal to $2$. To obtain these pictures, we exploit (\ref{Dispersion1Bis}) in a simple way: for a given range of values of $\nu$, we check whether or not there holds 
\[
\bigg|\cos\sqrt{\nu}-\cfrac{T\rho}{2}\,\cfrac{\sin\sqrt{\nu}}{\sqrt{\nu}}\bigg| \le |\sin(2\theta)|\,.
\]
In accordance with Theorem \ref{MainThmPerioMigration}, we observe that the  $\Xi^\rho_m(\theta)$ tend to singletons when $\pm\rho\to+\infty$. More precisely, when $\rho$ increases, i.e. when the near field geometry is enlarged, the $\Xi^\rho_m(\theta)$ expand up to $\rho=0$. Then the band $\Xi^\rho_1(\theta)$ emerges in $(-\infty;0)$ while the other spectral bands shrink as $\rho$ continues to grow. Note also that the whole spectrum comes down, which is expected because with Dirichlet boundary conditions, the eigenvalues decrease when the domain inflates.

\begin{figure}[!ht]
\centering
\includegraphics[width=14cm]{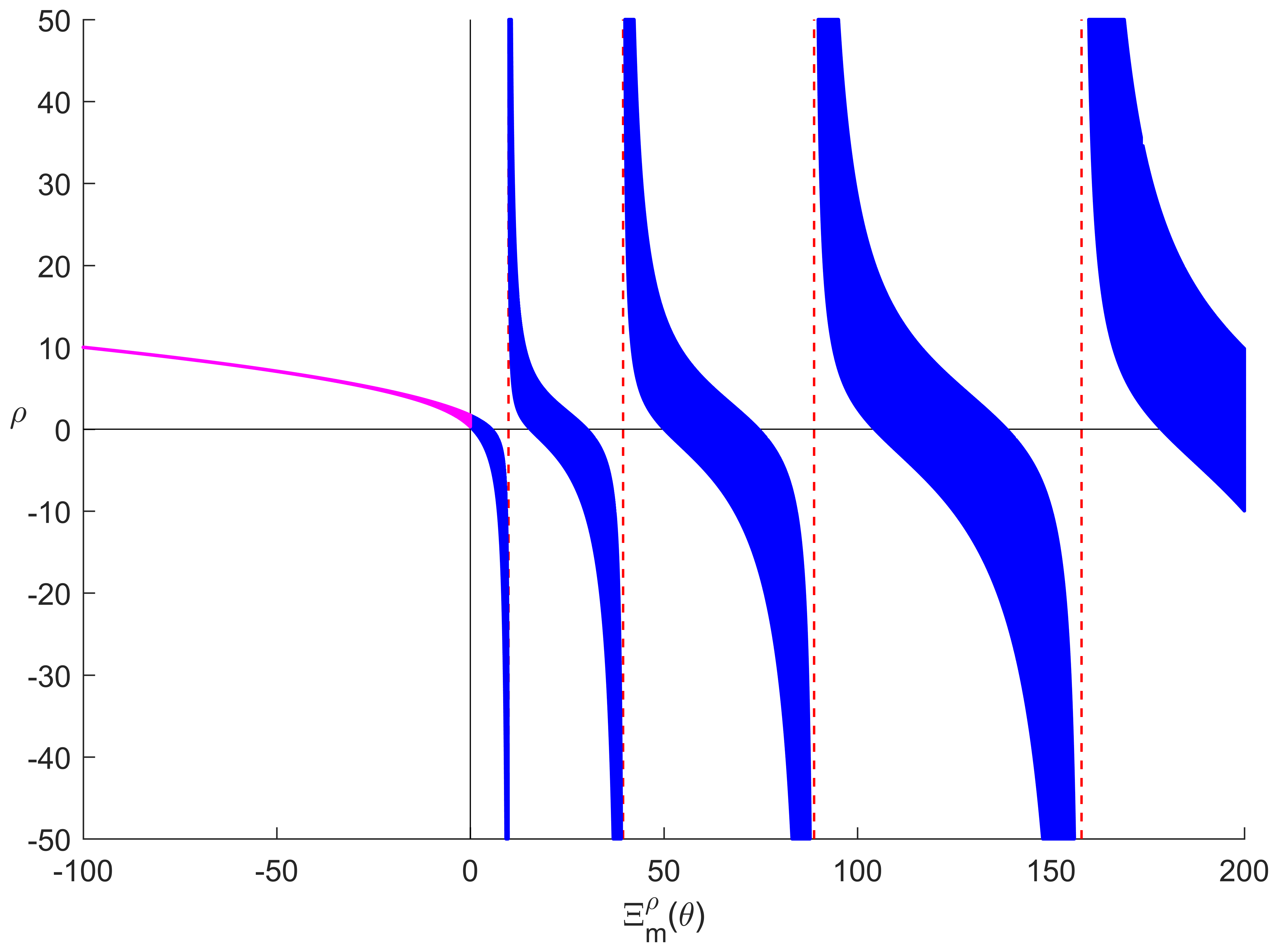}
\caption{Sets $\Xi^\rho_m(\theta)$ (in magenta and blue) with respect to $\rho$. We use the magenta colour to stress the situations where $\Xi^\rho_m(\theta)$ take negative values. The vertical red dashed lines correspond to the $m^2\pi^2$, $m\in\N^{\ast}$. Here we take $T=2$ (arbitrarily) and $\sin(2\theta)=0.7$.
\label{ImageAsymptoRho}}
\end{figure}

\begin{figure}[!ht]
\centering
\includegraphics[width=14cm]{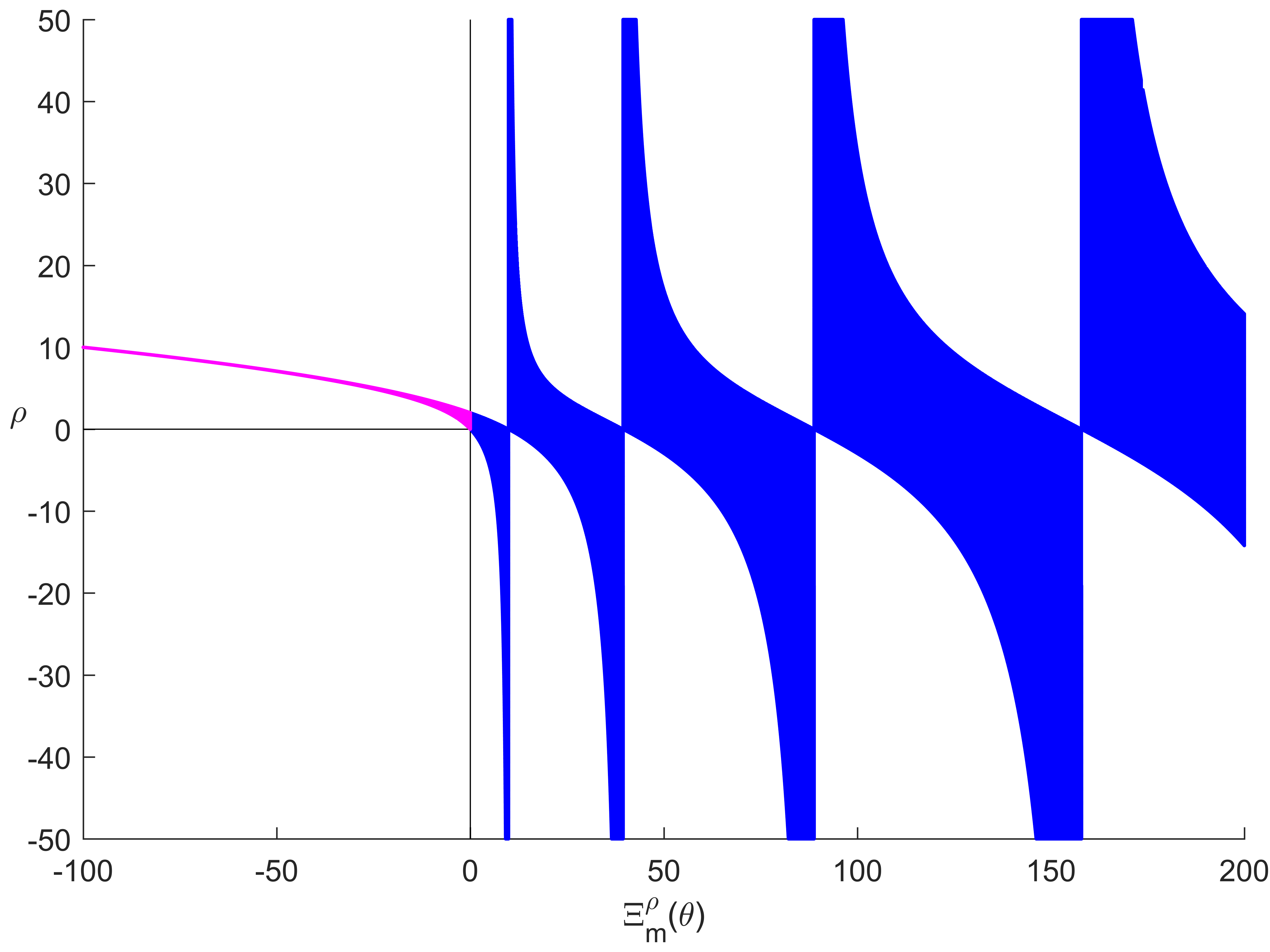}
\caption{Same quantities as in Figure \ref{ImageAsymptoRho}
but this time with $\sin(2\theta)=1$. 
\label{ImageAsymptoRho1}}
\end{figure}

\subsection{Behaviour of the eigenvalues of $\mathbb{S}$ under small perturbation of the geometry}
\noindent In this section, we show a result concerning the asymptotics of the eigenvalues of the threshold scattering matrix $\mathbb{S}$ (see  (\ref{DefTRScaMa}) for its definition) which has been used in the proof of Theorem \ref{MainThmPerioMigration} (for a close study, see also \cite{NaRU21}). Let $\Om$ be a waveguide which coincides with the strip $\R\times(-1/2;1/2)$ outside of a bounded region as described in Section \ref{SectionSetting}. Note that our result below is general and we do not assume here that $\dim\,\mX_\dagger=1$. Consider $\tau$ a simple eigenvalue of $\mathbb{S}$. We want to understand how this eigenvalue evolves when the geometry is slightly perturbed.\\
\newline
Define $\Om^\eps$ from $\Om$ as $\Om^{\rho,\eps}$ was constructed from $\Om_\star$ in (\ref{DefGeom}) with $\rho=1$. In particular, outside of $\mathscr{V}$, we have $\partial\Om^{\eps}=\partial\Om$ and inside $\mathscr{V}$,  $\partial\Om^{\eps}$ coincides with 
\begin{equation}\label{DefGeom}
\Gamma^{\eps}:=\{P(s)+\eps h(s)n(s)\,|\,s\in J\}.
\end{equation} 
We denote by $\mathbb{S}^\eps$ the threshold scattering matrix in $\Om^\eps$ and by $\tau^\eps$ its eigenvalue corresponding to the perturbation of $\tau$.

\begin{proposition}\label{PropositionCounterclock}
There are $\eps_0>0$, $C>0$ such that we have 
\begin{equation}\label{MainResultAsy}
\Big|\tau^\eps-\Big(\tau+\cfrac{i\tau\eps}{2}\,\int_{\Gamma} h|\partial_nv|^2\,ds\Big)\Big| \le C\,\eps^2,\qquad\forall \eps\in(0;\eps_0].
\end{equation}
Here $v\not\equiv0$ on $\Gamma$ is a function defined in (\ref{Decompov}).
\end{proposition}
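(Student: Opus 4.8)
The plan is to treat $\tau^\eps$ as a perturbation of the simple eigenvalue $\tau$ of the threshold scattering matrix and to extract its first-order correction by an energy/flux (Green's-identity) computation on the perturbed domain $\Om^\eps$. First I would fix an eigenvector $\boldsymbol{b}$ of $\mathbb{S}$ associated with $\tau$ and form the corresponding threshold solution $v$ of (\ref{NearFieldPb}) in the unperturbed waveguide $\Om$, normalizing it so that its far-field part is a fixed combination of $w^{\mrm{in}}$, $w^{\mrm{out}}$ dictated by $\boldsymbol{b}$; this is the function $v$ appearing in the statement and in (\ref{Decompov}). Since $\tau$ is simple, standard analytic perturbation theory (as in \cite[Chap. 9]{Kato95}) guarantees that $\mathbb{S}^\eps$ has a simple eigenvalue $\tau^\eps$ with $\tau^\eps\to\tau$ and that $\tau^\eps$ depends analytically on $\eps$, so that an expansion $\tau^\eps=\tau+\eps\,\tau'+O(\eps^2)$ is legitimate; the real content is the identification of $\tau'$.

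The key step is to compute $\tau'$ by a Green's-identity argument comparing the threshold solution $v$ in $\Om$ with the perturbed threshold solution $v^\eps$ in $\Om^\eps$. I would integrate $v^\eps\Delta\overline v-\overline v\Delta v^\eps$ over the truncated domain $\Om^R:=\{|x|<R\}$; since both satisfy the Helmholtz equation $\Delta\cdot+\pi^2\cdot=0$, the bulk terms cancel and only boundary contributions survive. On the lateral parts $\partial\Om\cap\partial\Om^\eps$ the Dirichlet condition kills the terms, while on the ends $\{x=\pm R\}$ the wave-packet structure (\ref{ScaSol}) together with the unitarity relation $\mathbb{S}\overline{\mathbb{S}}^\top=\mrm{Id}$ produces, in the limit $R\to+\infty$, an expression proportional to $\tau^\eps-\tau$. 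The crucial geometric contribution comes from the small boundary layer near $\Gamma^\eps$: on $\Gamma$ the function $v$ vanishes but $\partial_n v\ne0$, so the normal displacement $\eps h(s) n(s)$ of the boundary generates, via a first-order Taylor expansion of the Dirichlet trace of $v^\eps$ along the moved boundary, a surface term whose density is exactly $\eps\,h\,|\partial_n v|^2$. Matching the two evaluations of the same integral yields $\tau'=\tfrac{i\tau}{2}\int_\Gamma h\,|\partial_n v|^2\,ds$.

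The main obstacle is the rigorous control of the boundary-layer term near $\Gamma^\eps$: one must justify that the leading contribution of moving the Dirichlet boundary by $\eps h n$ is indeed $\eps\int_\Gamma h\,|\partial_n v|^2\,ds$, with an $O(\eps^2)$ remainder uniform in the relevant norms. Concretely I would introduce curvilinear coordinates $(s,t)$ adapted to $\Gamma$, write the Dirichlet condition $v^\eps=0$ on $\Gamma^\eps$ as a condition on $\Gamma$ after the change of variables, and expand $v^\eps(P(s)+\eps\rho h n)=v^\eps(P(s))+\eps\rho h\,\partial_n v^\eps+O(\eps^2)$; the factor $\partial_n v$ appearing squared reflects that both the test function and the solution carry one normal derivative on $\Gamma$. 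The factor $i$ and the sign are forced by the unitarity of $\mathbb{S}$ together with the normalization of $v$, and the positivity claim $T>0$ in (\ref{def_T}) follows since $h\ge0$, $h\not\equiv0$ and $\partial_n v\not\equiv0$ on $\Gamma$. The remaining estimate $|\tau^\eps-(\tau+\eps\tau')|\le C\eps^2$ is then obtained by bounding the next-order term in the analytic expansion, which is routine once the first-order identity is established and the implied constants are shown to depend only on $\Om$, $\Gamma$ and $h$.
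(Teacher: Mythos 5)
Your proposal is correct in substance but follows a genuinely different route from the paper. The paper does not perturb the eigen-solution directly: it first expands each scattering solution as $v_\pm^\eps=v_\pm+\eps v_\pm'+\dots$, derives the boundary value problem for the correctors $v_\pm'$ (with the inhomogeneous Dirichlet datum $-hn\cdot\nabla v_\pm$ on $\Gamma$, exactly the Taylor-expansion mechanism you describe), and extracts the entries of the matrix $\mathbb{S}'$ one by one via \emph{non-conjugated} Green identities pairing $v_+'$ with $v_\pm$ on the truncated domain. This yields $s'_{\pm\pm}=\tfrac{i}{2}\int_\Gamma h\,\partial_nv_\pm\partial_nv_\pm\,ds$ (no complex conjugation), after which finite-dimensional perturbation theory for the simple eigenvalue of the $2\times2$ matrix gives $\tau'=(\mathbb{S}'\boldsymbol{b},\boldsymbol{b})=\tfrac{i}{2}\int_\Gamma h(\partial_nv)^2\,ds$; a separate argument (that $e^{-i\varsigma/2}v$ is real-valued, which uses the symmetry and unitarity of $\mathbb{S}$) is then needed to convert $(\partial_nv)^2$ into $\tau|\partial_nv|^2$. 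Your conjugated identity $\int(v^\eps\Delta\overline v-\overline v\Delta v^\eps)$ applied to the perturbed eigen-combination delivers $|\partial_nv|^2$ and the factor $\tau$ in one stroke (the ends $x=\pm R$ contribute $2i\overline{\tau}(\tau^\eps-\tau)(\boldsymbol{b}^\eps,\boldsymbol{b})+O(\eps^2)$ via the pairing of the threshold packets $(|x|\mp i)\varphi$), which is arguably cleaner for the eigenvalue alone; what the paper's route buys is the full matrix $\mathbb{S}'$, which it also uses to justify $\tau_2^\eps=\tau_2+O(\eps)$ in (\ref{asympto_beta}). Two caveats on your version: your appeal to Kato for analyticity of $\tau^\eps$ in $\eps$ presupposes regularity of $\eps\mapsto\mathbb{S}^\eps$, which is not free and is essentially equivalent to the corrector analysis you defer to the "main obstacle" paragraph; and your direct identity requires $v$ and $v^\eps$ to be compared on a common domain near $\Gamma^\eps$ (an extension or change of variables), a technicality the paper sidesteps by posing the corrector problem on the fixed domain $\Om$. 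Neither caveat is a gap at the (formal) level of rigor the paper itself adopts.
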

\begin{remark}
This result also holds without sign assumption for $h$. However, we see that when we take $h\ge0$ with $h\not\equiv0$, i.e. when the geometry inflates, the eigenvalues of the threshold scattering matrix rotates counter-clockwise on the unit circle. On the contrary, when $\Om$ deflates, the eigenvalues of $\mathbb{S}$ rotate clockwise.
\end{remark}
\begin{proof}
To obtain an asymptotic expansion of $\tau^\eps$ when $\eps\to0^+$, we have to compute an expansion of the functions $v_{\pm}^\eps$ defined as $v_{\pm}$ in (\ref{ScaSol}) but in the geometry $\Om^\eps$. Let us focus our attention on $v_{+}^\eps$. We consider the simplest ansatz
\begin{equation}\label{Decompo}
v_{+}^\eps=v_{+}+\eps v_+'+\dots
\end{equation}
where $v_+'$ is to determine and the dots stand for higher-order terms. Inserting (\ref{Decompo}) in the problem (\ref{NearFieldPb}) posed in $\Om^\eps$, taking the limit $\eps\to0^+$ and identifying the powers in $\eps$, we find that $v_{+}$ coincides with the function introduced in (\ref{ScaSol}). Moreover, exploiting in particular the Taylor expansion, for $s\in J$, 
\[
\begin{array}{rcl}
0=v_{+}^\eps(P(s)+\eps h(s)n(s))&=& v_{+}^\eps(P(s))+\eps h(s)n(s)\cdot\nabla v^\eps_{+}(P(s))+O(\eps^2)\\[3pt]
&=&v_{+}(P(s))+\eps\,(v'_{+}(P(s))+h(s)n(s)\cdot\nabla v_{+}(P(s)))+O(\eps^2),
\end{array}
\]
we obtain that $v'_{+}$ satisfies the problem
\[
\begin{array}{|rcll}
\Delta v'_{+}+\pi^2v'_{+}&=&0&\mbox{ in }\Om\\[3pt]
v'_{+}&=&0 &\mbox{ on }\partial\Om\setminus\overline{\Gamma}\\[3pt]
v'_{+}&=& -hn\cdot\nabla v_{+} &\mbox{ on }\Gamma.
\end{array}
\]
Note that above and below, we naturally define $h(z)$, $n(z)$ for $z\in\Gamma$ by taking the values of $h(s)$, $n(s)$ for $s$ such that $z=P(s)$. On the other hand, since the incident field in the problem defining $v_{+}^\eps$ is independent of $\eps$, we find that $v'_{+}$ must admit the expansion
\begin{equation}\label{DecompoPerturb}
v'_+  =\psi_-s'_{+-}w^{\mrm{out}}+\psi_+ s'_{++}w^{\mrm{out}}+\tilde{v}'_+ 
\end{equation}
where $s'_{+\pm}$ are complex numbers and $\tilde{v}'_+$ decays exponentially as $|x|\to+\infty$. Now we wish to obtain expressions for the $s'_{+\pm}$. To proceed, we start from the identity
\begin{equation}\label{IntegralIPP}
0=\int_{\Om_\kappa }(\Delta v'_{+}+\pi^2v'_{+})v_\pm-v'_{+}(\Delta v_\pm+\pi^2v_\pm)\,dz
\end{equation}
where for $\kappa>0$, $\Om_\kappa:=\{z\in\Om\,|\,|x|\le \kappa\}$. Integrating by parts in (\ref{IntegralIPP}), we obtain for large $\kappa$
\begin{equation}\label{IntegralIPP2}
0=\int_{x=\kappa}\partial_x v'_{+}v_\pm-v'_{+}\partial_xv_\pm\,dy-\int_{x=-\kappa}\partial_x v'_{+}v_\pm-v'_{+}\partial_xv_\pm\,dy+\int_{\Gamma} h\partial_nv_+\partial_nv_\pm\,ds.
\end{equation}
Taking the limit $\kappa\to+\infty$ in (\ref{IntegralIPP2}) and working with the decompositions (\ref{ScaSol}), (\ref{DecompoPerturb}), we find
\[
0=2is'_{+\pm}+\int_{\Gamma} h\partial_nv_+\partial_nv_\pm\,ds\qquad\mbox{ and  so }\qquad s'_{+\pm}=\cfrac{i}{2}\,\int_{\Gamma} h\partial_nv_+\partial_nv_\pm\,ds.
\]
We get similar results when working with $v_-^\eps$. Finally, this analysis shows that
\[
\mathbb{S}^\eps=\mathbb{S}+\eps\mathbb{S}'+O(\eps^2)
\]
where $\mathbb{S}$ is the threshold scattering matrix appearing in (\ref{DefTRScaMa}) and 
\begin{equation}\label{TermsSP}
\mathbb{S}'=\left(\begin{array}{cc}
s'_{++} & s'_{+-}\\[3pt]
s'_{-+} & s'_{--}\\[3pt]\end{array}
\right)\qquad\mbox{ with }\qquad
\begin{array}{|rcl}
s'_{\pm\pm}&=&\dsp\cfrac{i}{2}\,\int_{\Gamma} h\partial_nv_\pm\partial_nv_\pm\,ds\\[6pt]
s'_{\pm\mp}&=&\dsp\cfrac{i}{2}\,\int_{\Gamma} h\partial_nv_\pm\partial_nv_\mp\,ds.
\end{array}
\end{equation}
Now we come to the asymptotics of $\tau^\eps$ (which is by assumption a simple eigenvalue of $\mathbb{S}^\eps$). Introduce $\boldsymbol{b}^\eps\in\R^2\setminus\{0\}$ a corresponding eigenvector. For $\tau^\eps$, $\boldsymbol{b}^\eps$, we work with the simplest ansatz
\begin{equation}\label{ExpansionEigen}
\tau^\eps=\tau+\eps\tau'+\dots,\qquad\quad\boldsymbol{b}^\eps=\boldsymbol{b}+\eps\boldsymbol{b}'+\dots,
\end{equation}
where again the dots denote unessential higher order terms. Inserting the expansions (\ref{ExpansionEigen}) in the relation $\mathbb{S}^\eps\boldsymbol{b}^\eps=\tau^\eps\boldsymbol{b}^\eps$ and identifying the powers in $\eps$, we get
\begin{equation}\label{IdenPowers}
\mathbb{S}\boldsymbol{b}=\tau\boldsymbol{b},\qquad\qquad \mathbb{S}\boldsymbol{b}'-\tau\boldsymbol{b}'=-(\mathbb{S}'\boldsymbol{b}-\tau'\boldsymbol{b}).
\end{equation}
We deduce first that $(\tau,\boldsymbol{b})$ is an eigenpair of $\mathbb{S}$. Besides, taking the inner product of the second equality of (\ref{IdenPowers}) with $\boldsymbol{b}$ (which is real), we find
\[
\begin{array}{ll}
&\phantom{-}(\mathbb{S}\boldsymbol{b}'-\tau\boldsymbol{b}',\boldsymbol{b})=(\boldsymbol{b}',\overline{\mathbb{S}}^\top\boldsymbol{b}-\overline{\tau}\boldsymbol{b})=(\boldsymbol{b}',\overline{\mathbb{S}}\boldsymbol{b}-\overline{\tau}\boldsymbol{b})=0\\[5pt]
=&-(\mathbb{S}'\boldsymbol{b}-\tau'\boldsymbol{b},\boldsymbol{b}),
\end{array}
\]
which gives
\begin{equation}\label{FormulaTauP}
\tau'=\cfrac{(\mathbb{S}'\boldsymbol{b},\boldsymbol{b})}{(\boldsymbol{b},\boldsymbol{b})}\,.
\end{equation}
Note that as expected, $\tau'$ is independent of the choice of the normalization made for $\boldsymbol{b}^\eps$. Since $\tau$ belongs to the unit circle in the complex plane, there is $\varsigma\in[0;2\pi)$ such that $\tau=e^{i\varsigma}$. Introduce $b_+$, $b_-\in\R$ such that $\boldsymbol{b}=(b_+,b_-)^\top$. Additionally, we assume that we have $(\boldsymbol{b},\boldsymbol{b})=1$. Combining (\ref{TermsSP}) and (\ref{FormulaTauP}), we obtain 
\begin{equation}\label{ScdFormula}
\tau'=\cfrac{i}{2}\,\int_{\Gamma} h(\partial_nv)^2\,ds\qquad\mbox{ with }\qquad v:=b_+\,v_++b_-\,v_-.
\end{equation}
But the function $v$ admits the expansion
\begin{equation}\label{Decompov}
\begin{array}{rcl}
v&=&\psi_-(b_-w^{\mrm{in}}+(b_+s_{+-}+b_-s_{--})w^{\mrm{out}})+\psi_+(b_+w^{\mrm{in}}+(b_+s_{++}+b_-s_{-+})w^{\mrm{out}})+\tilde{v} \\[3pt]
&=&b_-\psi_-(w^{\mrm{in}}+e^{i\varsigma}w^{\mrm{out}})+b_+\psi_+(w^{\mrm{in}}+e^{i\varsigma}w^{\mrm{out}})+\tilde{v}
\end{array}
\end{equation}
with $\tilde{v}$ which decays exponentially as $|x|\to+\infty$. From (\ref{Decompov}), we infer that $e^{-i\varsigma/2}v$ is valued in $\R$ because $e^{-i\varsigma/2}v-\overline{e^{-i\varsigma/2}v}$ is exponentially decaying at infinity. Therefore we have
\[
\tau'=\cfrac{i\tau}{2}\,\int_{\Gamma} h|\partial_nv|^2\,ds.
\]
On the other hand, the unique continuation principle ensures that $\partial_nv$ can not vanish identically on $\Gamma$. With these two properties, together with (\ref{ScdFormula}), finally we find the desired result (\ref{MainResultAsy}). Our approach above is rather formal but rigorous justification can be obtained classically by converting the perturbation of the geometry into a perturbation in an equation set in a fixed domain. For these techniques, we refer the reader for example to \cite{DeZo11,HePi18} and \cite[Chap.\,7,\,\S6.5]{Kato95}.
\end{proof}

\section{Simple examples of near field geometries where $\dim\,\mX_\dagger=1$}\label{SectionNFgeom}
 
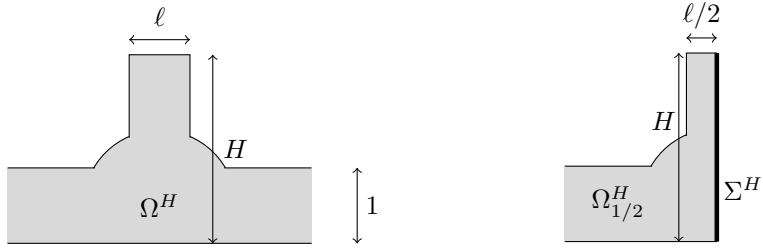
\begin{figure}[!ht]
\centering
\begin{tikzpicture}[scale=1]
\begin{scope}
\clip(-1,-1/2) rectangle (1,2);
\draw[fill=gray!30] (0,0) circle [radius=1];
\end{scope}
\draw[fill=gray!30,draw=none](-2,-1/2) rectangle (2,1/2);
\draw[fill=gray!30,draw=none](-0.4,-1/2) rectangle (0.4,2);
\draw[black] (-2,-1/2)--(2,-1/2);
\draw[black] (-2,1/2)--(-0.85,1/2);
\draw[black] (-0.4,0.9)--(-0.4,2)--(0.4,2)--(0.4,0.9);
\draw[black] (0.85,1/2)--(2,1/2);
\draw[black,<->] (1/2+0.2,-1/2)--(1/2+0.2,2);
\node at (1/2+0.5,0.75) {\small $ H$};
\draw[black,<->] (-0.4,2.2)--(0.4,2.2);
\node at (0,2.5) {\small $ \ell$};
\draw[black,<->] (2+0.6,-1/2)--(2+0.6,1/2);
\node at (2+0.8,0) {\small $1 $};
\node at (0,0) {\small $\Om^H$};
\end{tikzpicture}\qquad\qquad\qquad\begin{tikzpicture}[scale=1]
\begin{scope}
\clip(-1,-1/2) rectangle (0,2);
\draw[fill=gray!30] (0,0) circle [radius=1];
\end{scope}
\draw[fill=gray!30,draw=none](-2,-1/2) rectangle (0,1/2);
\draw[fill=gray!30,draw=none](-0.4,-1/2) rectangle (0,2);
\draw[black] (-2,-1/2)--(0,-1/2);
\draw[black] (-2,1/2)--(-0.85,1/2);
\draw[black] (-0.4,0.9)--(-0.4,2)--(0,2);
\draw[black,line width=0.6mm] (0,-1/2)--(0,2);
\draw[black,<->] (0-0.5,-1/2)--(0-0.5,2);
\node at (0-0.7,1.1) {\small $ H$};
\node at (0+0.35,0.2) {\small $\Sigma^H$};
\draw[black,<->] (-0.4,2.2)--(0,2.2);
\node at (-0.2,2.5) {\small $ \ell/2$};
\node at (-1.3,0) {\small $\Om^H_{1/2}$};
\end{tikzpicture}
\caption{Geometries of $\Om^H$ (left) and $\Om^H_{1/2}$ (right). 
\label{GeometryPartH}} 
\end{figure}

\noindent In this section, we explain how to exhibit simple geometries for which $\dim\,\mX_\dagger=1$. To proceed, we adapt ideas used in \cite{ChNP18,ChPaSu}. For $\ell$ fixed in $(1;2)$ and $H>1$, we assume that $\Om$, that we denote now $\Om^H$, coincides with 
\[
\mathcal{S}\cup \mathcal{R}^H,\qquad\mbox{ where }\qquad\mathcal{S}=\R\times(-1/2;1/2),\ \mathcal{R}^H:=(-\ell/2;\ell/2)\times(0;H), 
\]
outside of $B(O,r_0)$, the ball centred at $O$ and of radius $r_0>\ell$. Moreover, we assume that $\Om^H$ is symmetric with respect to the $(Oy)$ axis, i.e. that there holds
\[
\Om^H=\{(-x,y)\,|\,(x,y)\in\Om^H\}
\]
(see an example of such $\Om^H$ in Figure \ref{GeometryPartH} left). Below, we use the objects introduced in Section \ref{SectionNFPb}. We add a superscript ${}^H$ to indicate the dependence in $H$. As seen in (\ref{ScaSol}), Problem (\ref{NearFieldPb}) admits the solutions
\begin{equation}\label{ScaSolH}
\begin{array}{rcl}
v_+^H & =& \psi_-s_{+-}^Hw^{\mrm{out}}+\psi_+(w^{\mrm{in}}+s_{++}^Hw^{\mrm{out}})+\tilde{v}_+^H \\[5pt]
v_-^H & =& \psi_-(w^{\mrm{in}}+s_{--}^Hw^{\mrm{out}})+\psi_+s_{-+}^Hw^{\mrm{out}}+\tilde{v}_-^H
\end{array}
\end{equation}
where $s_{\pm\pm}^H$, $s_{\mp\pm}^H\in\Cplx$ and where $\tilde{v}_\pm^H$ decay exponentially as $|x|\to+\infty$. By observing that the function $(x,y)\mapsto v_-^H(-x,y)$ has the same expansion as $v_+^H$ at infinity, by uniqueness of the definition of the scattering matrix, we deduce that $\mathbb{S}^H$ has the simple form
\begin{equation}\label{defMatScaSym}
\mathbb{S}^H=\left(
\begin{array}{cc}
\mathcal{R}^H &  \mathcal{T}^H\\[2pt]
\mathcal{T}^H & \mathcal{R}^H
\end{array}
\right)\qquad\mbox{ with }\qquad \mathcal{R}^H=s^H_{++}=s^H_{--}\quad\mbox{ and }\quad\mathcal{T}^H=s^H_{+-}=s^H_{-+}.
\end{equation}
Let us continue to exploit the property of symmetry of $\Om^H$. Classically, define the half-waveguide  
\[
\Om^H_{1/2}:=\{(x,y)\in\Om^H\,|\,x<0\}
\]
and consider the problem with Dirichlet boundary conditions 
\begin{equation}\label{DemiPbD}
\begin{array}{|rcll}
\Delta U+\pi^2U&=&0&\mbox{ in }\Om^H_{1/2}\\[3pt]
U&=&0 &\mbox{ on }\partial\Om^H_{1/2}
\end{array}
\end{equation}
as well as the one with mixed boundary conditions 
\begin{equation}\label{DemiPbN}
\begin{array}{|rcll}
\Delta u+\pi^2u&=&0&\mbox{ in }\Om^H_{1/2}\\[3pt]
\partial_n u&=&0 &\mbox{ on }\Sigma^H:=\partial\Om^H_{1/2}\cap\big(\{0\}\times(0;H)\big)\\[3pt]
u&=&0 &\mbox{ on }\partial\Om^H_{1/2}\setminus\overline{\Sigma^H}.
\end{array}
\end{equation}
Here $\partial_n=\partial_x$ on $\Sigma^H$. Problems (\ref{DemiPbD}) and (\ref{DemiPbN}) admit respectively the solutions
\begin{equation}\label{DefSolDemi}
\begin{array}{rcl}
U^H&=&w^{\mrm{in}}+r_D^H w^{\mrm{out}}+\tilde{U}^H\\[5pt]
u^H&=&w^{\mrm{in}}+r_N^H w^{\mrm{out}}+\tilde{u}^H
\end{array}
\end{equation}
where $r_D^H$, $r_N^H\in\Cplx$ are uniquely defined and $\tilde{U}^H,\,\tilde{u}^H\in\mH^1(\Om^H_{1/2})$. Due to conservation of energy, one has
\[
|r_D^H|=|r_N^H|=1.
\]
Direct inspection shows that if $W$ is a solution of Problem (\ref{NearFieldPb}) in $\Om^H$, then we have $W(x,y)=(u^H(x,y)+U^H(x,y))/2$ in $\Om^H_{1/2}$ and $W(x,y)=(u^H(-x,y)-U^H(-x,y))/2$ in $\Om^H\setminus\overline{\Om^H_{1/2}}$ (up possibly to a term which is exponentially decaying at $\pm\infty$ if there are trapped modes). We deduce that the scattering coefficients $\mathcal{R}^H$, $\mathcal{T}^H$ appearing in (\ref{defMatScaSym}) are such that
\begin{equation}\label{IdentitiesSym}
\mathcal{R}^H=\cfrac{r_N^H+r_D^H}{2}\qquad\mbox{ and }\qquad \mathcal{T}^H=\cfrac{r_N^H-r_D^H}{2}\,.
\end{equation}
Computing the characteristic polynomial of $\mathbb{S}^H$ and using the above relations, we find
\[
\mrm{det}(\mathbb{S}^H-\lambda\,\mrm{Id})=(\lambda-\mathcal{R}^H)^2-(\mathcal{T}^H)^2=(\lambda-(\mathcal{R}^H-\mathcal{T}^H))(\lambda-(\mathcal{R}^H+\mathcal{T}^H))=(\lambda-r_D^H)(\lambda-r_N^H).
\]
We deduce that the eigenvalues of $\mathbb{S}^H$ are exactly $r_D^H$ and $r_N^H$. Let us study their behaviour with respect to $H\to+\infty$. \\
\newline
We start with $r_D^H$. When $H\to+\infty$, we are led to study the problem
\begin{equation}\label{DemiPbDinf}
\begin{array}{|rcll}
\Delta U+\pi^2U&=&0&\mbox{ in }\Om^\infty_{1/2}\\[3pt]
U&=&0 &\mbox{ on }\partial\Om^\infty_{1/2}
\end{array}
\end{equation}
where $\Om^\infty_{1/2}:=\bigcup_{H\ge r_0}\Om^H_{1/2}$ ($\Om^\infty_{1/2}$ is unbounded in the $y$ direction). Due to the fact that $\ell\in(1;2)$, in the vertical branch of $\Om^\infty_{1/2}$, which is of width $\ell/2$, no mode of (\ref{DemiPbDinf}) can propagate. For this reason, Problem (\ref{DemiPbDinf}) admits a solution with the expansion 
\begin{equation}\label{DefZetal}
U^{\infty}=\zeta_l\,(w^{\mrm{in}}+r_D^\infty w^{\mrm{out}})+\tilde{U}^{\infty}
\end{equation}
where $r_D^\infty\in\Cplx$, $\tilde{U}^{\infty}\in\mH^1(\Om^\infty_{1/2})$. Here $\zeta_l$ is a smooth cut-off function such that $\zeta_l=1$ for $x<-2r_0$ and $\zeta_l=0$ for $x>-r_0$. Note that by conservation of energy, we have $|r_D^\infty|=1$. Then working as in \cite[Chap.\,5,\,\S5.6]{NaPl94} (see also \cite[Prop.\,8.1]{ChNP18}), one can show that the function $U^H$ introduced in (\ref{DefSolDemi}) is well-approximated by $U^{\infty}$ when $H$ tends to $+\infty$. More precisely, we can prove the estimate
\[
\|U^H-U^\infty\|_{\mH^1(\Om^H_{1/2})} \le C\,e^{-\pi\sqrt{4/\ell^2-1}H}
\]
where $C>0$ is independent of $H$. From this, we infer that $\lim_{H\to+\infty}r_D^H=r_D^\infty$. As a consequence, if $r_D^\infty\ne-1$, which is the case in general, we deduce that we have $r_D^H\ne-1$ for $H$ large enough.\\
\newline
Now let us study the behaviour of $r_N^H$ as $H\to+\infty$.  When $H\to+\infty$, we are led to study the problem
\begin{equation}\label{DemiPbNInf}
\begin{array}{|rcll}
\Delta u+\pi^2u&=&0&\mbox{ in }\Om^\infty_{1/2}\\[3pt]
\partial_n u&=&0 &\mbox{ on }\Sigma^\infty:=\partial\Om^\infty_{1/2}\cap\big(\{0\}\times(0;H)\big)\\[3pt]
u&=&0 &\mbox{ on }\partial\Om^\infty_{1/2}\setminus\overline{\Sigma^\infty}.
\end{array}
\end{equation}
It admits solutions of the form
\begin{equation}\label{DefScaSol}
\begin{array}{rcl}
u^{\infty}_l&=&\zeta_l\,(w^{\mrm{in}}+r_N^\infty\,w^{\mrm{out}})+\zeta_t \,t_N^\infty\,w^{\mrm{out}}+\tilde{u}^{\infty}_l \\[3pt]
u^{\infty}_t&=&\zeta_l\,t_N^\infty\,w^{\mrm{out}}+\zeta_t\,(w^{\mrm{in}}_t+\tilde{r}_N^\infty\, w^{\mrm{out}}_t)+\tilde{u}^{\infty}_t
\end{array}
\end{equation}
where $r_N^\infty$, $\tilde{r}_N^\infty$, $t_N^\infty\in\mathbb{C}$ and $\tilde{u}^{\infty}_l$, $\tilde{u}^{\infty}_t\in\mH^1(\Om^\infty_{1/2})$. Here 
\[
w^{\mrm{out}}_t(x,y)=\beta_\ell^{-1/2}\,e^{i\beta_\ell y}\sqrt{2/\ell}\cos(\pi x/\ell),\qquad w^{\mrm{in}}_t(x,y)=\beta_\ell^{-1/2}\,e^{-i\beta_\ell y}\sqrt{2/\ell}\cos(\pi x/\ell)
\]
with $\beta_\ell:=\pi\sqrt{1-1/\ell^2}>0$. Additionally, $\zeta_l$ is the one introduced in (\ref{DefZetal}) while $\zeta_t$ is a smooth cut-off function such that $\zeta_t=1$ for $y\ge 2r_0$ and $\zeta_t=0$ for $y\le r_0$. Let us look for an expansion of $u^H$ of the form 
\begin{equation}\label{expansionuH}
u^H=u^{\infty}_l+a(H)\,u^{\infty}_t+\dots
\end{equation}
where $a(H)$ is an unknown function and the dots correspond to a small remainder. Exploiting the condition $u^H=0$ at $y=H$, we deduce that we must impose
\[
t_N^\infty e^{i\beta_\ell H}+a(H)(e^{-i\beta_\ell H}+\tilde{r}_N^\infty e^{i\beta_\ell H})=0.
\]
Assuming that $|\tilde{r}_N^\infty|\ne1\Leftrightarrow t_N^\infty\ne0$, this gives 
\[
a(H)=\cfrac{-t_N^\infty }{\tilde{r}_N^\infty+e^{-2i\beta_\ell H}}\,.
\]
Inserting the expression of $a(H)$ in (\ref{expansionuH}) and looking at the behaviour as $x\to-\infty$, we get 
\[
r_N^H=r_N^\mrm{asy}(H)\qquad\mbox{ with }\qquad r_N^\mrm{asy}(H):=r_N^\infty-\cfrac{(t_N^\infty)^2}{\tilde{r}_N^\infty+e^{-2i\beta_\ell H}}+\dots.
\]
More precisely, one can establish the estimate $|r_N^H-r_N^\mrm{asy}(H)|\le C\,e^{-c\,H}$ for some constants $c$, $C>0$ independent of $H$. Exploiting that the scattering matrix associated with the solutions (\ref{DefScaSol}) is unitary and working with the M\"obius transform as in \cite[\S3.2]{ChNP18}, one can show that $H\mapsto r_N^\mrm{asy}(H)$ runs periodically on the unit circle as $H\to+\infty$. Since on the other hand one can prove that $H\mapsto r_N^H$ runs continuously on the unit circle\footnote{Observe that this is not directly a consequence of Proposition \ref{PropositionCounterclock} because $\partial\Om^H_{1/2}$ has some corner points. However this can be shown by using the tools presented in \cite[Chap.\,2,\,4,\,5]{MaNP00}.}, we deduce that for certain $H$, $r_N^H$ passes (exactly) through the point of affix $-1$. \\
\newline 
\noindent We summarize the above results in the following statement. 
\begin{proposition}\label{PropositionCaseSym}
Assume that $\Om^\infty_{1/2}$ is such that $r_D^\infty\ne-1$, $t_N^\infty\ne0$. Then there exists a sequence $(H_n)_{n}$ satisfying
\[
\lim_{n\to+\infty}H_n=+\infty\qquad\mbox{ and }\qquad\lim_{n\to+\infty}H_{n+1}-H_{n}=\cfrac{\pi}{\beta_\ell}=\cfrac{1}{\sqrt{1-1/\ell^2}}
\]
(the sequence is unbounded and almost periodic) such that the scattering matrix $\mathbb{S}^{H_n}$ in the geometry $\Om^{H_n}$ has an eigenvalue equal to $-1$.
\end{proposition}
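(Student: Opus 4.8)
The plan is to use the symmetry reduction already in place: by (\ref{defMatScaSym})--(\ref{IdentitiesSym}) the two eigenvalues of $\mathbb{S}^H$ are exactly $r_D^H$ and $r_N^H$, so it suffices to produce a sequence $H_n\to+\infty$ along which one of these two coefficients equals $-1$. Since $r_D^H\to r_D^\infty\ne-1$, the coefficient $r_D^H$ stays bounded away from $-1$ for $H$ large; hence I would seek the crossings of $-1$ solely among the values of $r_N^H$. In this way the whole statement reduces to a one-dimensional claim about the continuous curve $H\mapsto r_N^H$ on the unit circle of $\Cplx$.

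First I would analyse the model curve $r_N^{\mathrm{asy}}(H)=r_N^\infty-(t_N^\infty)^2/(\tilde r_N^\infty+e^{-2i\beta_\ell H})$. Writing $\zeta=e^{-2i\beta_\ell H}$, this is the image of $\zeta$ under a fixed Möbius map $\mu$. Because $t_N^\infty\ne0$ the map $\mu$ is non-degenerate (otherwise $r_N^{\mathrm{asy}}\equiv r_N^\infty$), and because the $2\times2$ scattering matrix built from $r_N^\infty,t_N^\infty,\tilde r_N^\infty$ in (\ref{DefScaSol}) is unitary, one has $|r_N^{\mathrm{asy}}(H)|=1$ for every $H$. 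A Möbius transformation sends circles to circles, so $\mu$ maps the unit $\zeta$-circle to a circle contained in the unit circle, which must then be the unit circle itself; by injectivity the map is a circle diffeomorphism. Consequently, as $H$ increases by the period $\pi/\beta_\ell$ the variable $\zeta$ runs once around the unit circle and $r_N^{\mathrm{asy}}(H)$ sweeps the whole unit circle exactly once, its continuous argument changing strictly by $\pm2\pi$ per period and passing transversally through $-1$ once per period.

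Next I would transfer this picture to the exact coefficient using the two facts already stated: the uniform estimate $|r_N^H-r_N^{\mathrm{asy}}(H)|\le Ce^{-cH}$ and the continuity of $H\mapsto r_N^H$ on the unit circle. Both curves have unit modulus, and for $H$ large the exponential bound keeps them strictly non-antipodal, so $\arg(r_N^H/r_N^{\mathrm{asy}}(H))$ lies in $(-\pi,\pi)$ and is $O(e^{-cH})$. Lifting to continuous arguments, this forces the continuous argument $\phi(H)$ of $r_N^H$ to change by $\pm2\pi+O(e^{-cH})$ over each period, whence $\phi(H)\to\pm\infty$. By the intermediate value theorem $\phi$ meets $\pi\pmod{2\pi}$ in every period for $H$ large, yielding a sequence $H_n\to+\infty$ with $r_N^{H_n}=-1$; the uniform winding rate $\pm2\pi+o(1)$ per period gives $H_{n+1}-H_n\to\pi/\beta_\ell$. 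Combined with $r_D^{H_n}\ne-1$, this also confirms that the eigenvalue $-1$ of $\mathbb{S}^{H_n}$ is simple, i.e. $\dim\,\mX_\dagger=1$.

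The hardest points are the two analytic inputs I am borrowing from the surrounding construction, and I would make them the technical core of the proof. The first is the uniform-in-$H$ exponential estimate $|r_N^H-r_N^{\mathrm{asy}}(H)|\le Ce^{-cH}$: this is a matched-asymptotics argument in the fixed half-waveguide $\Om^\infty_{1/2}$ with the far-field feedback produced by the Dirichlet endpoint at $y=H$, in the spirit of \cite[Chap.\,5,\,\S5.6]{NaPl94}, and the delicate part is controlling the remainder uniformly as $H\to+\infty$. The second is the continuity (and unit modulus) of $H\mapsto r_N^H$; as noted in the footnote this does not follow from Proposition \ref{PropositionCounterclock} because $\partial\Om^H_{1/2}$ has corner points, so I would establish it through the weighted-space corner technology of \cite[Chap.\,2,\,4,\,5]{MaNP00}. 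Once these two facts are secured, the topological winding argument above is routine.
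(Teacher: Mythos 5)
Your proposal is correct and follows essentially the same route as the paper: reduce via the symmetry decomposition to the coefficients $r_D^H$ and $r_N^H$, discard $r_D^H$ using $r_D^H\to r_D^\infty\ne-1$, and then track the crossings of $-1$ by $r_N^H$ through the Möbius-transform model $r_N^{\mathrm{asy}}(H)$ running periodically around the unit circle, combined with the exponential error estimate and the continuity of $H\mapsto r_N^H$. The only difference is that you spell out the circle-diffeomorphism and winding/argument-lift details that the paper delegates to \cite[\S3.2]{ChNP18}, while correctly identifying the two analytic inputs (the uniform estimate $|r_N^H-r_N^{\mathrm{asy}}(H)|\le Ce^{-cH}$ and the corner-domain continuity of $r_N^H$) as the technical core, exactly as the paper does.
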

\begin{remark}
Note that by adapting the above procedure, we could consider cases where $\ell>2$. It would also provide geometries where $-1$ is an eigenvalue of $\mathbb{S}^{H}$.
\end{remark}

\section{Gaps in the particular case of classical Kirchhoff and anti-Kirchhoff transmission conditions}\label{SectionGapsUsualTR}

In this section, we wish to come back to the possible situation where $\Om$ is such $\dim\,\mX_\dagger=1$ and where we must impose classical Kirchhoff or anti-Kirchhoff transmission conditions in the model problem (\ref{farfieldBis}) of order $\eps^0$. With the notation of (\ref{defTheta}), this happens when $\theta=\pi/4$ or $\theta=3\pi/4$. Note that showing the existence of such geometries is not obvious but might be possible. As explained after (\ref{SpectralBandsBis}), in this case, after a shift of $\pi^2/\eps^2$, the bands of the model problem cover the interval $[\pi^2/\eps^2;+\infty)$. Our goal is to show that in general however, there are (short) gaps in the spectrum of the exact operator $A^\eps$ above $\pi^2/\eps^2$. In other words, we want to prove that in general lungs, that is, spectral bands, do not touch in the breathing process when perturbing the near field geometry around such a $\Om$. To establish this result, we need to construct a refined model involving terms of order $\eps$. This will be done by adapting some techniques presented in \cite{Naza10,BoPa13} (see also \cite{Naza24} for an application to a problem involving the Neumann Laplacian).\\
\newline
To present the analysis, we first introduce a few particular functions defined in $\Om$. To make things simple, we assume that $\Om$ is symmetric with respect to the $(Oy)$ axis, i.e. such that $\Om=\{(-x,y)\,|\,(x,y)\in\Om\}$, and that there holds $\mX_{\mrm{tr}}=\{0\}$. First we focus our attention on the case $\dim\,\mX_\dagger=1$ and $\theta=\pi/4$ (Kirchhoff transmission conditions). By definition of these two parameters, then the vector $(1,1)^{\top}$ is an eigenvector of the threshold scattering matrix $\mathbb{S}$ associated with the eigenvalue $-1$. Exploiting this property, we find that the functions $v_+$, $v_-$ introduced in (\ref{ScaSol}) satisfy
\[
\begin{array}{rcl}
v_++v_- &=& \psi_-(w^{\mrm{in}}+(s_{+-}+s_{--})w^{\mrm{out}})+\psi_+(w^{\mrm{in}}+(s_{++}+s_{-+})w^{\mrm{out}})+\tilde{v}_++\tilde{v}_- \\[3pt]
&=& \psi_-(w^{\mrm{in}}-w^{\mrm{out}})+\psi_+(w^{\mrm{in}}-w^{\mrm{out}})+\tilde{v}_++\tilde{v}_-\\[3pt]
&=& \psi_-\,w_0+\psi_+\,w_0+\tilde{v}_++\tilde{v}_-.
\end{array}
\]
This shows that $\mX_{\mrm{bo}}$ contains a function admitting the expansion
\[
U_0=\sum_{\pm}\psi_\pm w_0+\tilde{U}_0
\]
with $\tilde{U}_0\in\mH^1_0(\Om)$. Observing that $(x,y)\mapsto U_0(x,y)-U_0(-x,y)$ belongs to $\mX_{\mrm{tr}}$, since $\mX_{\mrm{tr}}=\{0\}$, we infer that $U_0$ is even with respect to $(Oy)$. Besides, the general results presented for example in \cite[Chap.\,5]{NaPl94} guarantee that the space 
$\mX$ introduced in (\ref{DefSpaceX}) of solutions of (\ref{NearFieldPb}) which are at most affine at infinity is of dimension two. Let 
\[
U=\sum_{\pm}\psi_\pm (C^1_\pm w_1+C^0_\pm w_0)+\tilde{U},
\]
with $C^1_\pm$, $C^0_\pm\in\Cplx$ and $\tilde{U}\in\mH^1_0(\Om)$, be an element of this space. Integrating by parts in the identity
\[
0=\int_{\Om^R}(\Delta U+\pi^2U)U_0-U(\Delta U_0+\pi^2U_0)\,dz,
\]
where $\Om^R=\{(x,y)\in\Om\,|\,|x|<R\}$, and taking the limit $R\to+\infty$, we get $C^1_+=-C^1_-$. Since $\mX_{\mrm{bo}}\subset\mX$ is of dimension one, we deduce that $\mX$ contains exactly one function with the expansion
\begin{equation}\label{DefU1}
U_1=\sum_{\pm}\pm\psi_\pm (w_1+m_{\Om}w_0)+\tilde{U}_1
\end{equation}
where $m_{\Om}\in\R$ and $\tilde{U}_1\in\mH^1_0(\Om)$. Classically $m_{\Om}$ is called the polarization coefficient. Observing that $(x,y)\mapsto U_1(x,y)+U_1(-x,y)$ belongs to $\mX_{\mrm{tr}}$, since $\mX_{\mrm{tr}}=\{0\}$, we infer that $U_1$ is odd with respect to $(Oy)$. Finally, simple manipulations allow one to show that there is a function $U_2$ satisfying
\[
\begin{array}{|rcll}
-\Delta U_2-\pi^2U_2&=&U_0&\mbox{ in }\Om\\[3pt]
U_2&=&0&\mbox{ on }\partial\Om
\end{array}
\]
and admitting the expansion 
\begin{equation}\label{DefU2}
U_2=\sum_{\pm}\psi_\pm (-\frac{w^2_1}{2}+M_\Om\,w_1+C_{\Om}\,w_0)+\tilde{U}_2
\end{equation}
with $M_\Om$, $C_{\Om}\in\R$ and $\tilde{U}_2\in\mH^1_0(\Om)$. Note that $(x,y)\mapsto U_2(x,y)-U_2(-x,y)$ belongs to $\mX_{\mrm{tr}}$ because $U_0$ is even with respect to the $(Oy)$ axis. Therefore $U_2$ is also even with respect to the $(Oy)$ axis.\\
\newline
Now pick $\eta\in[0;2\pi)$ and introduce $u^\eps(\cdot,\eta)$ an eigenfunction of (\ref{PbSpectralCell}) associated with some eigenvalue $\Lambda^\eps(\eta)$. In the sequel, to simplify, we do not indicate the dependence on $\eta$. As a refined approximation of (\ref{ExpansionTR0_2})
when $\eps\to0^+$, we consider the expansion
\begin{equation}\label{ExpansionTR0_2_bis}
\Lambda^\eps=\eps^{-2}\pi^2+\nu+\eps\Lambda'+\dots,\quad u^\eps(z)=(\gamma(x)+\eps\gamma'(x)+\eps^2(\gamma''(x)+V(x,y/\eps)))\,\varphi(y/\eps)+\dots.
\end{equation}
From what has been obtained in \S\ref{ParaDim1}, we know that $(\nu,\gamma)$ must be an eigenpair of (\ref{farfieldBis}). Let us observe that for $\theta=\pi/4$, Problem (\ref{farfieldBis}) simplifies into
\begin{equation}\label{farfieldTer}
\begin{array}{|rclrcl}
\partial^2_x\gamma+\nu\gamma&=&0\quad\mbox{ in }I=(-1/2;1/2)\\[3pt]
\gamma(-1/2)&=&e^{i\eta}\gamma(+1/2) \\[3pt]
\partial_x\gamma(-1/2)&=&e^{i\eta}\partial_x\gamma(+1/2).
\end{array}
\end{equation}
Solving it, we get 
\begin{equation}\label{CalculExact}
\nu=(\eta+2\pi k)^2\mbox{ for some }\,k\in\Z\qquad\mbox{ and }\qquad\gamma(x)=e^{i(-\eta-2\pi k)x}.
\end{equation}
Next we have to identify the terms of order $\eps$ in (\ref{ExpansionTR0_2_bis}). First we find that we must have
\begin{equation}\label{farfieldQua}
\begin{array}{|rclrcl}
\partial^2_x\gamma'+\nu\gamma'&=&-\Lambda'\gamma\quad\mbox{ in }I_-\cup I_+\\[3pt]
\gamma'(-1/2)&=&e^{i\eta}\gamma'(+1/2) \\[3pt]
\partial_x\gamma'(-1/2)&=&e^{i\eta}\partial_x\gamma'(+1/2).
\end{array}
\end{equation}
We still have to impose transmission conditions at the origin. Let us find them. In a neighbourhood of $O$, we look for an expansion of $u^\eps$ of the form 
\begin{equation}\label{NearFieldBis}
u^\eps(z)=U_0(z/\eps)+\eps(a_1U_1(z/\eps))+\eps^2(a_2U_2(z/\eps))+\dots
\end{equation}
where $a_1$, $a_2$ are some constants to determine. As $x\to0^\pm$, we have the Taylor expansions 
\[
\gamma(x)=1+x\partial_x\gamma(0)+\cfrac{x^2}{2}\,\partial^2_x\gamma(0)+O(x^3),\qquad \gamma'(x)=\gamma'(0^\pm)+x\partial_x\gamma'(0^\pm)+O(x^2)
\]
so that we can write, as $x\to0^\pm$,
\[
\gamma(x)+\eps\gamma'(x)+\eps^2\gamma''(x)=1+\eps\,(\gamma'(0^\pm)+\cfrac{x}{\eps}\,\partial_x\gamma(0))+\eps^2\,(\gamma''(0^\pm)+\cfrac{x}{\eps}\,\partial_x\gamma'(0^\pm)+\cfrac{x^2}{\eps^2}\,\partial^2_x\gamma(0))+\dots.
\]
By matching this far field expansion with the inner field expansion (\ref{NearFieldBis}), from the definition of $U_1$ and $U_2$ (see (\ref{DefU1}), (\ref{DefU2})), we see that we must take $a_1=\partial_x\gamma(0)$ and $a_2=-\partial^2_x\gamma(0)$ in (\ref{NearFieldBis}). Using in particular that $-\partial^2_x\gamma(0)=\nu\gamma(0)$, we deduce that we must complement (\ref{farfieldQua}) with the conditions
\begin{equation}\label{TRModelEps}
\begin{array}{|rcl}
\gamma'(0^+)-\gamma'(0^-)&=&2m_{\Om}\partial_x\gamma(0)\\[3pt]
\partial_x\gamma'(0^+)-\partial_x\gamma'(0^-)&=&-2M_{\Om}\partial^2_x\gamma(0)=2\nu M_{\Om}\gamma(0).
\end{array}
\end{equation}
If $\nu=\nu(\eta)$ is a simple eigenvalue of (\ref{farfieldTer}), which happens if and only if $\eta\notin\{0,\pi\}$ then $\Lambda'$ in (\ref{farfieldQua}) is computed through the compatibility condition. More precisely, multiplying (\ref{farfieldQua}) by $\gamma$, integrating twice by parts and exploiting (\ref{TRModelEps}), we obtain 
\begin{equation}\label{MainTermDeriv}
\Lambda'=2 m_{\Om}|\partial\gamma(0)|^2-2\nu M_{\Om}|\gamma(0)|^2=2\nu(m_{\Om}-M_{\Om}).
\end{equation}
Thus we see that by taking into account the terms of order $\eps$, compared  to the model of order $\eps^0$, the dispersion curves move up or down depending on the value of $m_{\Om}-M_{\Om}$ (note that this quantity depends only on the inner field geometry $\Om$).\\

\begin{figure}[!ht]
\centering
\includegraphics[width=16cm,trim={3cm 20.8cm 3cm 2.4cm},clip]{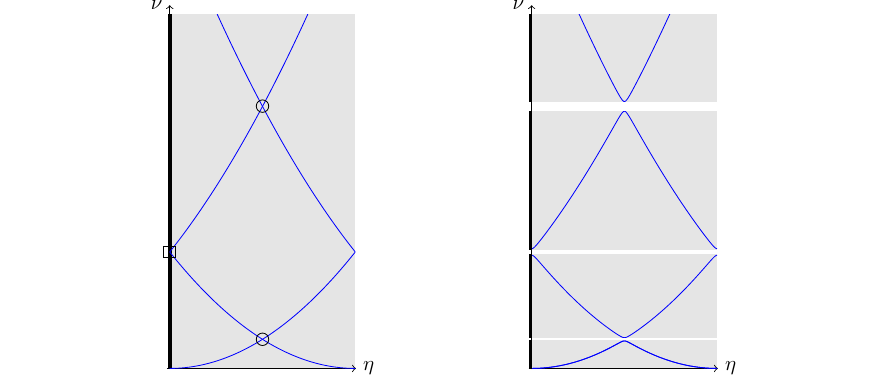}
\caption{Schematic picture of the dispersion curves of the models of orders $\eps^0$ (left) and $\eps$ (right). On the right picture, the projection of the white rectangles on the vertical axis corresponds to the gaps of the model of order $\eps$.\label{ImageDispersionCurvesModel}}
\end{figure}

\noindent Now we turn our attention to the case where $\nu=\nu(\eta)$ is a double eigenvalue of (\ref{farfieldTer}). To set ideas, we assume that $\eta=\pi$ so that according to (\ref{CalculExact}), there holds $\nu(\eta)=(2k+1)^2\pi^2$ for some $q\in\N$ (see the circles in Figure \ref{ImageDispersionCurvesModel} left). Our study above (see in particular formula (\ref{MainTermDeriv})) does not allow us to understand precisely enough the behaviour of the exact dispersion curves at the point $(\pi,\nu(\pi))$. Actually here we expect that they present a rapid variation. To capture it, following \cite{Naza10}\footnote{For another approach, one may consult the proof of \cite[Theorem 2]{LeWZ19} where the authors follows a Lyapunov-Schmidt reduction strategy.}, let us impose some dependence of the Floquet parameter with respect to $\eps$. More precisely, for $t\in\R$, let us work with $\eta=\eta^\eps=\pi+\eps t$. Denote by $\Lambda^\eps_\pm(\eta^\eps)$, with $\Lambda^\eps_-(\eta^\eps)\le\Lambda^\eps_+(\eta^\eps)$, the two eigenvalues of (\ref{PbSpectralCell}) closest to $\Lambda^0:=(2k+1)^2\pi^2$ and $u^\eps_\pm(z,\eta^\eps)$ corresponding eigenfunctions. Our goal is to obtain an expansion of $\Lambda^\eps_\pm(\eta^\eps)$, $u^\eps_\pm(z,\eta^\eps)$ as $\eps$ tends to zero (note that both the geometry and the Floquet parameter depend on $\eps$). We consider the ansatz
\[
\Lambda^\eps_\pm(\eta^\eps)=\Lambda^0+\eps\Lambda'_\pm(t)+\dots
\] 
\[
u^\eps_\pm(z,\eta^\eps)=(\gamma_\pm(x,t)+\eps\gamma'_\pm(x,t))\,\varphi(y/\eps)+\dots
\]
with $\gamma_\pm(x,t)=a_\pm\,e^{-i\sqrt{\Lambda^0} x}+b_\pm\,e^{i\sqrt{\Lambda^0} x}$, the constants $a_\pm$, $b_\pm$ being to determine. As in (\ref{farfieldQua}), (\ref{TRModelEps}), first we find that $\Lambda'_\pm(t)$, $\gamma'_\pm(x,t)$ must satisfy
\begin{equation}\label{farfieldCinq}
\begin{array}{|rclrcl}
\partial^2_x\gamma'_\pm+\Lambda^0\gamma'_\pm&=&-\Lambda'_\pm(t)\gamma_\pm\quad\mbox{ in }I_-\cup I_+\\[3pt]
\gamma'_\pm(0^+)-\gamma'_\pm(0^-)&=&2m_{\Om}\partial_x\gamma_\pm(0) \\[3pt]
\partial_x\gamma'_\pm(0^+)-\partial_x\gamma'_\pm(0^-)&=&2\Lambda^0 M_{\Om}\gamma_\pm(0).
\end{array}
\end{equation}
Concerning the quasi-periodic boundary conditions, exploiting that $\eta^\eps=\pi+\eps t$ and writing a Taylor expansion ($\eps t$ is small), identifying the terms of order $\eps$, we obtain
\begin{equation}\label{QuasiPeriot}
\begin{array}{|rcl}
\gamma'_\pm(1/2,t)+\gamma'_\pm(-1/2,t) &=& -it\gamma_\pm(1/2,t)\\[3pt]
\partial_x\gamma'_\pm(1/2,t)+\partial_x\gamma'_\pm(-1/2,t) &=& -it\partial_x\gamma_\pm(1/2,t).
\end{array}
\end{equation}
In order the problem (\ref{farfieldCinq})--(\ref{QuasiPeriot}) for $\gamma'_+$ to have a solution, two compatibility conditions must be satisfied. They are obtained by multiplying (\ref{farfieldCinq}) by $e^{\pm i\sqrt{\Lambda^0} x}$ and integrating by parts. At the end, we find that the quantities $\Lambda'_+(t)$, $\mathfrak{u}_+:=(a_+,b_+)^{\top}$ solve
\[
\mathscr{M}(t)\mathfrak{u}_+=\Lambda'_+(t)\mathfrak{u}_+\quad\mbox{ with }\quad \mathscr{M}(t):=2\left(\begin{array}{cc}
t\sqrt{\Lambda^0}+\Lambda^0(M_\Om-m_\Om) & \Lambda^0(M_\Om+m_\Om) \\
\Lambda^0(M_\Om+m_\Om) & -t\sqrt{\Lambda^0}+\Lambda^0(M_\Om-m_\Om)
\end{array}\right).
\]
Similarly, in order the problem (\ref{farfieldCinq})--(\ref{QuasiPeriot}) for $\gamma'_-$ to have a solution, we get that 
$\Lambda'_-(t)$, $\mathfrak{u}_-:=(a_-,b_-)^{\top}$ must satisfy
\[
\mathscr{M}(t)\mathfrak{u}_-=\Lambda'_-(t)\mathfrak{u}_-.
\]
Here, implicitly we have defined $\Lambda'_+(t)$ (resp. $\Lambda'_-(t)$) as the largest (resp. smallest) eigenvalue of $\mathscr{M}(t)$. A direct calculus yields
\[
\Lambda'_\pm(t)=2\Lambda^0(M_\Om-m_\Om)\pm 2\Lambda^0\sqrt{t^2/\Lambda^0+(M_\Om+m_\Om)^2}.
\]
Therefore we have
\[
\Lambda'_+(t)-\Lambda'_-(t)=4\Lambda^0\sqrt{t^2/\Lambda^0+(M_\Om+m_\Om)^2}.
\]
Thus if $M_\Om+m_\Om\ne0$, we see that there holds $\Lambda'_+(t)-\Lambda'_-(t)>0$ for all $t\in\R$. We infer that the two dispersion curves of the model of order $\eps^0$ which form a ``cross''  at the point $(\pi,\Lambda^0)$  (see the circles on Figure \ref{ImageDispersionCurvesModel} left) move away when considering the model of order $\eps$. As a consequence a gap of width $O(\eps)$ opens as in Figure  \ref{ImageDispersionCurvesModel} right. We summarize this result in the following statement. 
\begin{proposition}\label{PropositionGaps}
Assume that the domain $\Om$ is symmetric with respect to the $(Oy)$ axis. Assume also that $\dim\,\mX_\dagger=1$ and $\theta=\pi/4$ in (\ref{defTheta}). If additionally the constants 
$m_\Om$, $M_\Om$ introduced in (\ref{DefU1}), (\ref{DefU2}) are such that $M_\Om+m_\Om\ne0$, then for $\eps$ small enough the operator $A^\eps$ has some gaps above $\pi^2/\eps^2$.
\end{proposition}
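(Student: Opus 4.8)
The plan is to translate the matched-asymptotics computation carried out above into a genuine spectral gap for $A^\eps$, working through the Floquet--Bloch fibration. Recall that $\sigma(A^\eps)=\bigcup_p\Upsilon^\eps_p$ and that an open interval contained in $(\pi^2/\eps^2;+\infty)$ is a gap precisely when it is avoided by every dispersion curve $\eta\mapsto\Lambda^\eps_p(\eta)$. Since $\theta=\pi/4$ gives classical Kirchhoff conditions, the leading ($\eps^0$) model (\ref{farfieldTer}) has the explicit bands $\nu=(\eta+2\pi k)^2$, which together cover $[0;+\infty)$ and touch only at the crossing points $\eta\in\{0,\pi\}$, where $\nu$ becomes the double eigenvalue $(2k)^2\pi^2$ or $(2k+1)^2\pi^2$. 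Hence at this order there is no gap, and any gap of $A^\eps$ must be of size $O(\eps)$ and must open precisely at one of these crossings. I would therefore fix one crossing, say $\Lambda^0:=(2k+1)^2\pi^2$ at $\eta=\pi$, and show that the two branches separate at order $\eps$ (the crossings at $\eta=0$ are handled identically, but a single one suffices for the claim).

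First I would set $\eta=\eta^\eps=\pi+\eps t$ and justify the two-scale ansatz $\Lambda^\eps_\pm(\eta^\eps)=\Lambda^0+\eps\Lambda'_\pm(t)+\dots$ already introduced: inserting it into the cell problem (\ref{PbSpectralCell}), matching the far-field corrector against the inner expansion (\ref{NearFieldBis}) built from $U_0,U_1,U_2$, and imposing the $\eps$-order transmission conditions (\ref{TRModelEps}) together with the linearised quasi-periodicity (\ref{QuasiPeriot}). As shown, the two solvability conditions collapse to the $2\times2$ eigenvalue problem $\mathscr{M}(t)\mathfrak{u}_\pm=\Lambda'_\pm(t)\mathfrak{u}_\pm$, whence
\[
\Lambda'_+(t)-\Lambda'_-(t)=4\Lambda^0\sqrt{t^2/\Lambda^0+(M_\Om+m_\Om)^2}\ge 4\Lambda^0|M_\Om+m_\Om|>0
\]
uniformly in $t\in\R$ under the hypothesis $M_\Om+m_\Om\ne0$. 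Since $\Lambda'_-(t)$ is maximised and $\Lambda'_+(t)$ minimised at $t=0$, the top of the lower branch is $\Lambda^0+\eps\Lambda'_-(0)$ and the bottom of the upper branch is $\Lambda^0+\eps\Lambda'_+(0)$, so the candidate gap is an interval of width $4\eps\Lambda^0|M_\Om+m_\Om|+o(\eps)$ centred at $\Lambda^0+2\eps\Lambda^0(M_\Om-m_\Om)$.

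To upgrade this to a true gap I would check that no band re-enters the candidate interval. For $|t|\to\infty$, i.e. for $\eta$ leaving the $O(\eps)$-window around $\pi$, one has $\Lambda'_\pm(t)\sim\pm2\sqrt{\Lambda^0}\,|t|$, so the two branches move monotonically away from the interval; and for $\eta$ bounded away from $\{0,\pi\}$ the eigenvalue of (\ref{farfieldTer}) is simple, so the correction is governed by the single formula (\ref{MainTermDeriv}) and the corresponding band stays outside the interval for $\eps$ small. Patching the near-crossing description with the simple-eigenvalue one over $\eta\in[0;2\pi)$ then shows the shifted interval $\eps^{-2}\pi^2+(\Lambda^0+\eps\Lambda'_-(0);\Lambda^0+\eps\Lambda'_+(0))$ is omitted by $\sigma(A^\eps)$.

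The main obstacle, and the only genuinely non-formal part, is the rigorous error control, which must be uniform in $\eta$ and must in particular survive the degeneracy at the crossing where $\Lambda^0$ is a double eigenvalue of the limit fibre. I would handle this by building quasimodes $(\Lambda^0+\eps\Lambda'_\pm,\,\hat u^\eps_\pm)$ from the asymptotics, bounding the residual of (\ref{PbSpectralCell}) in $\mL^2(\om^\eps)$ by $O(\eps^{1+\delta})$, and invoking the lemma on almost eigenvalues for a cluster of two eigenvalues (as in \cite{Naza10,BoPa13}): because the $2\times2$ reduction controls the splitting inside the cluster, this yields $|\Lambda^\eps_\pm(\eta^\eps)-(\Lambda^0+\eps\Lambda'_\pm)|\le C\eps^{1+\delta}$ with $C$ independent of $t$ on compacts. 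As this error is of smaller order than the $O(\eps)$ gap, the band edges are pinned and the gap persists for $\eps$ small enough, which is exactly the conclusion of the proposition.
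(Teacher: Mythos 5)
Your proposal follows essentially the same route as the paper: the same two-scale ansatz $\eta^\eps=\pi+\eps t$ at the crossing $\Lambda^0=(2k+1)^2\pi^2$, the same reduction to the $2\times2$ eigenvalue problem for $\mathscr{M}(t)$, and the same splitting formula $\Lambda'_+(t)-\Lambda'_-(t)=4\Lambda^0\sqrt{t^2/\Lambda^0+(M_\Om+m_\Om)^2}$ yielding an $O(\eps)$ gap when $M_\Om+m_\Om\ne0$. The additional points you make --- patching the near-crossing analysis with the simple-eigenvalue formula (\ref{MainTermDeriv}) away from $\eta\in\{0,\pi\}$, and the quasimode/almost-eigenvalue argument for rigorous error control --- are exactly the standard completions the paper leaves implicit by referring to \cite{Naza10,BoPa13}.
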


\noindent If $M_\Om+m_\Om=0$, we can not conclude from this study and it is necessary to compute higher order terms in the asymptotics. Roughly speaking, there is no gap above $\pi^2/\eps^2$ in the spectrum of the exact operator $A^\eps$
if for any order of the model, the corresponding dispersion curves cross. This happens when $\Om$ is the straight strip (see the discussion of Remark \ref{RmkRefStrip}) but seems very rare.\\
\newline
Above we focused our attention on the case of classical Kirchhoff transmission conditions at the origin ($\theta=\pi/4$ in (\ref{farfieldBis})). The situation $\theta=3\pi/4$, which corresponds to anti-Kirchhoff transmission conditions, is also interesting. Then  (\ref{farfieldBis}) writes
\[
\begin{array}{|rclrcl}
\partial^2_x\gamma^\pm+\nu\gamma^\pm&=&0\quad\mbox{ in }I_\pm\\[3pt]
\gamma^-(-1/2)&=&e^{i\eta}\gamma^+(+1/2) & \gamma^+(0)&=&-\gamma^-(0)\\[3pt]
\partial_x\gamma^-(-1/2)&=&e^{i\eta}\partial_x\gamma^+(+1/2) & \qquad\partial_x\gamma^+(0)&=&-\partial_x\gamma^-(0).
\end{array}
\]
Solving this problem, we get $\nu=(\eta+\pi(2k+1))^2$ for some $k\in\Z$ with $\gamma_\pm(x)=\pm e^{i(-\eta-\pi(2k+1))x}$. As a consequence, the spectral bands of the model of order $\eps^0$ cover the interval $[\pi^2/\eps^2;+\infty)$ (after the usual shift by $\pi^2/\eps^2$). However again one can show that in general there are some gaps in the spectrum of the exact operator $A^\eps$. To proceed, one works as above when $\theta=\pi/4$. The first step consists in showing the existence of functions $V_0$, $V_1$ solving (\ref{NearFieldPb}) and admitting the expansions 
\[
V_0=\sum_{\pm}\pm\psi_\pm w_0+\tilde{V}_0\qquad V_1=\sum_{\pm}\psi_\pm (w_1+m_{\Om}w_0)+\tilde{V}_1
\]
with a new $m_{\Om}\in\R$ and $\tilde{V}_0$, $\tilde{V}_1\in\mH^1_0(\Om)$. Then we establish that there is $V_2$ satisfying
\[
\begin{array}{|rcll}
-\Delta V_2-\pi^2V_2&=&V_0&\mbox{ in }\Om\\[3pt]
V_2&=&0&\mbox{ on }\partial\Om
\end{array}
\]
which decomposes as
\[
V_2=\sum_{\pm}\pm\psi_\pm (-\frac{w^2_1}{2}+M_\Om\,w_1+C_{\Om}\,w_0)+\tilde{V}_2,
\]
with some new $M_{\Om}$, $C_\Om\in\R$ and $\tilde{V}_2\in\mH^1_0(\Om)$. With these three functions, then we can adapt the analysis which led to Proposition \ref{PropositionGaps}.

\section{Numerics}\label{SectionNumerics}

In this section, we give numerical illustrations of some of the results above. First, we use what has been done in Section \ref{SectionNFgeom} to provide  examples of geometries where $\mX_\dagger\ne\{0\}$. We work in the waveguide
\begin{equation}\label{DefGeomH}
\Om^H=\mathcal{S}\cup \mathcal{R}^H\qquad\mbox{ with }\qquad\mathcal{S}=\R\times(-1/2;1/2)\qquad\mbox{and}\qquad\mathcal{R}^H=(-\ell/2;\ell/2)\times(0;H).
\end{equation}
Numerically, we approximate the functions $v_\pm^H$ introduced in (\ref{ScaSolH}). To proceed, we truncate the domain $\Om^H$ at $x=\pm L$ with $L=2$ and impose the complex Robin conditions
\begin{equation}\label{RobinConditions}
\pm \cfrac{\partial v}{\partial x}=\cfrac{1}{L-i}\,v\quad\mbox{ at }x=\pm L.
\end{equation}
We emphasize that (\ref{RobinConditions}) is an approximated transparent condition. Indeed the function $w^{\mrm{out}}$ defined in (\ref{defModeswinout}) satisfies it exactly but the $v_\pm^H$ only up to an error which decays when $L$ increases. For more details concerning this approach, we refer the reader to \cite[\S6.2]{ChNa23}. Then we work with a standard P2 finite element method by using the library \texttt{FreeFem++} \cite{Hech12}. This allows us to get an approximation of the matrix $\mathbb{S}^H$ introduced in (\ref{defMatScaSym}).\\
\newline
In Figure \ref{TR_matrix_2D} left, we take $\ell=1.6$ and make $H$ vary in $[1.5;6]$. Since $\mathbb{S}^H$ is unitary, its two eigenvalues are located on the unit circle. Therefore, we only display the phase of the two eigenvalues. In accordance with what has been obtained in Section \ref{SectionNFgeom} and in particular the result of Proposition \ref{PropositionCaseSym}, we observe that one eigenvalue tends to a constant on the unit circle while the other rotates continuously and counter-clockwise (which is coherent with Proposition \ref{PropositionCounterclock}) when $H$ increases. Additionally, we indeed find that one eigenvalue passes through $-1$ almost periodically, the period being equal to 
\begin{equation}\label{Period}
\cfrac{\pi}{\beta_\ell}=\cfrac{1}{\sqrt{1-1/\ell^2}}\approx 1.281.
\end{equation}
Numerically, we obtain that $-1$ is an eigenvalue of $\mathbb{S}^H$ for 
\begin{equation}\label{defHstar}
H\in\{1.764,3.047,4.329,5.612\}.
\end{equation}
Note that the period (\ref{Period}) is well respected. In Figure \ref{TR_matrix_2D} right, we display the same quantities but this time we work in the geometry $\Om^H$ defined in (\ref{DefGeomH}) with  $\ell=1.9$. As expected, we observe that the rotating eigenvalue passes more frequently through $-1$.

\begin{figure}[!ht]
\centering
\includegraphics[width=0.47\textwidth]{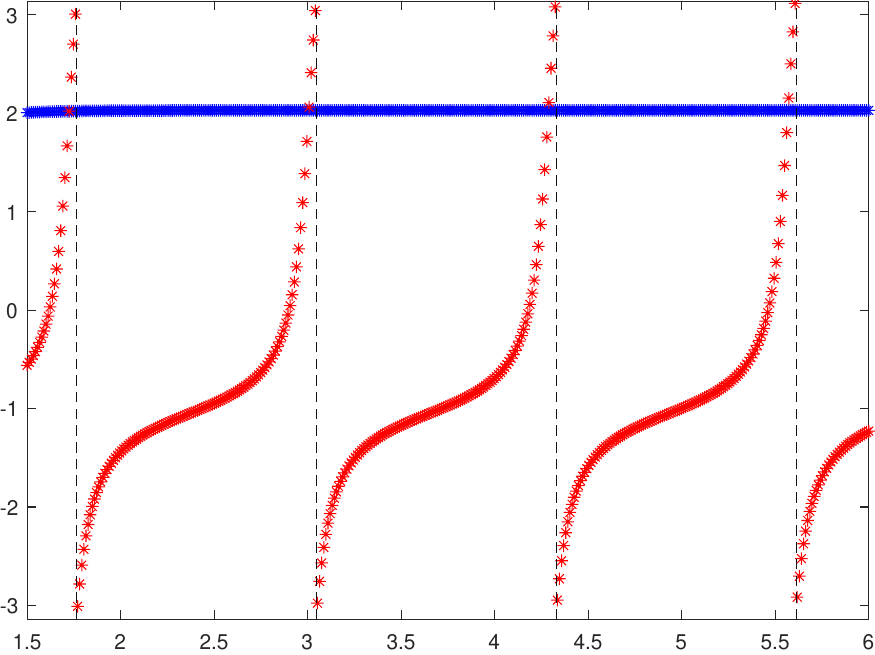}\quad\includegraphics[width=0.47\textwidth]{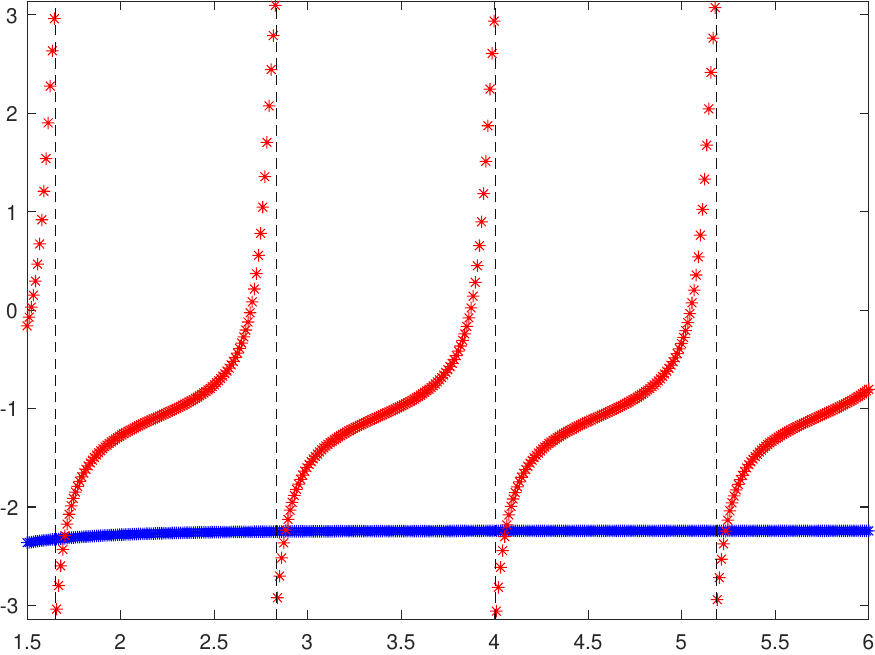}
\caption{Phase of the two eigenvalues of the threshold scattering matrix $\mathbb{S}^H$ defined in (\ref{defMatScaSym}) with respect to $H\in[1.5;6]$. The vertical dashed lines correspond to the values of $H$ for which the phase of one eigenvalue is equal to $\pi$ (then the corresponding eigenvalue is equal to $-1$). Left and right results are obtained respectively for $\ell=1.6$ and $\ell=1.9$. }
\label{TR_matrix_2D}
\end{figure}

\noindent In the next series of numerical experiments, we compute the spectrum of Problem (\ref{PbSpectralCell}) set in the thin periodicity cell $\om^\eps$. The $\om^\eps$ we consider is constructed from $\Om^H$ as in (\ref{DefGeomH}) with $\ell=1.6$ and for different values of $H$. We remind the reader that the way $\om^\eps$ is defined from $\Om^H$ appears in (\ref{defCell}). To solve  (\ref{PbSpectralCell}), classically we rewrite it as a problem with periodic boundary conditions at $\partial\om^\eps_\pm$. Again we work with a P2 finite element method. The matrices are constructed with \texttt{FreeFem++} while the resolution of the eigenvalue problem is made with \texttt{Matlab}\footnote{\url{https://mathworks.com}}. Since the spectrum of (\ref{PbSpectralCell}) is $2\pi$-periodic in $\eta$, we compute it for $\eta\in[0;2\pi]$.\\
\newline
In Figure \ref{Bands_eps0pt1}, we represent the first six eigenvalues of (\ref{PbSpectralCell}) with respect to $\eta\in[0;2\pi]$ for $H=2.5$ and $\eps=0.1$. In other words, we display the dispersion curves $\eta\mapsto \Lambda^\eps_p(\eta)$, for $p=1,\dots,6$. The red dashed lines indicate the value of the normalized threshold $\pi^2/\eps^2$. In Figure \ref{Bands_eps0pt02}, we compute the same quantities but this time with $\eps=0.02$. In accordance with the result of Theorem \ref{MainThmPerio}, we observe that the dispersion curves below $\pi^2/\eps^2$ (in magenta) are extremely flat, generating extreminly short spectral bands $\Upsilon^\eps_p$. Concerning, the bands located above $\pi^2/\eps^2$ (in blue), also in agreement with Theorem \ref{MainThmPerio}, we note that they get shorter when $\eps$ gets smaller. As a consequence, the size of the gaps between the bands increases as $\eps$ tends to zero. If one looks closely at the size of the gap between $\Upsilon^\eps_{N_\bullet+m}$ and $\Upsilon^\eps_{N_\bullet+m+1}$ (here $N_\bullet=2$), one finds a value which corroborates the $(2m+1)\pi^2$ predicted by the theory (see the discussion after Theorem \ref{MainThmPerio}).

\begin{figure}[!ht]
\centering
\includegraphics[width=0.46\textwidth]{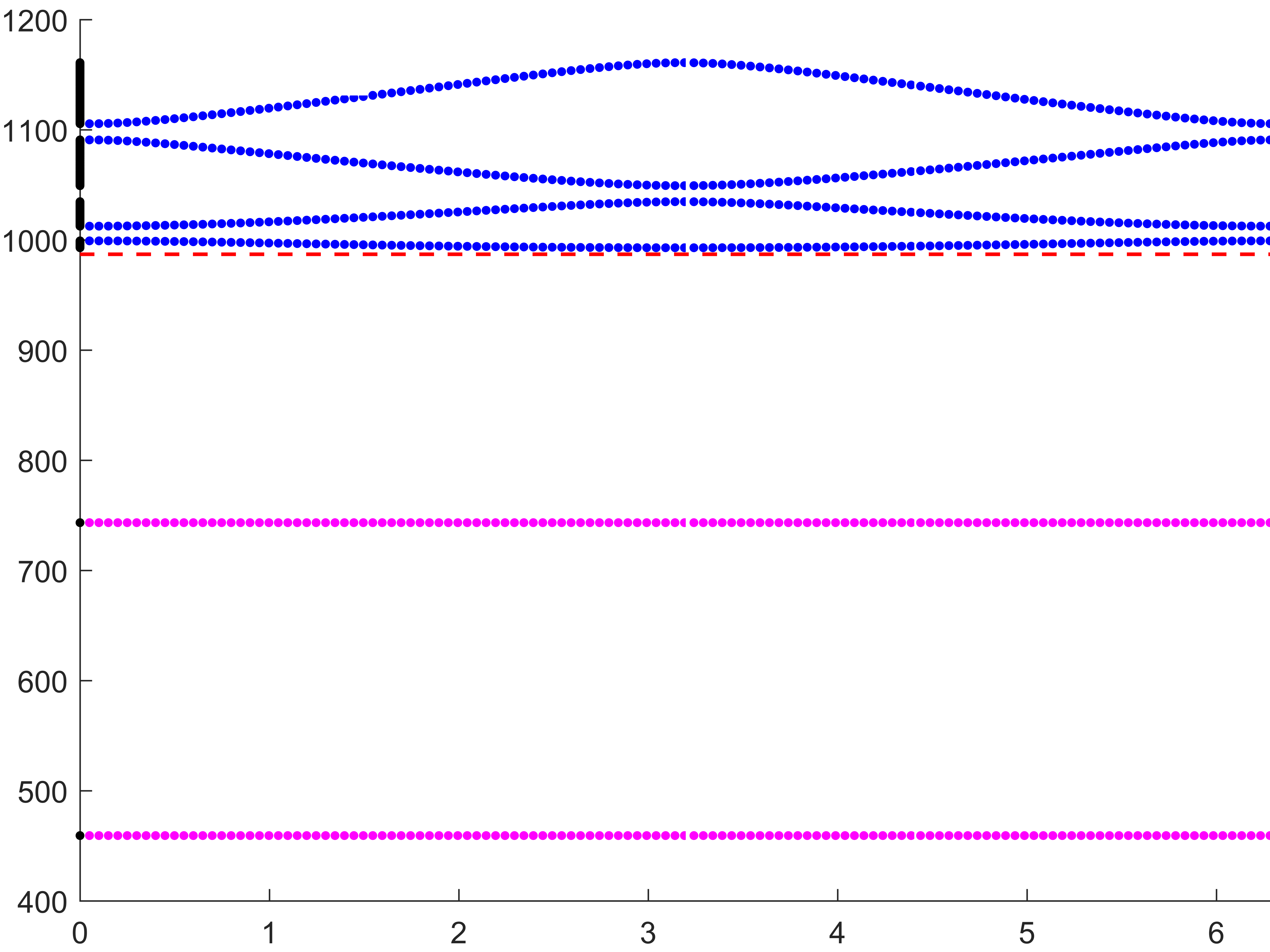}\quad\includegraphics[width=0.46\textwidth]{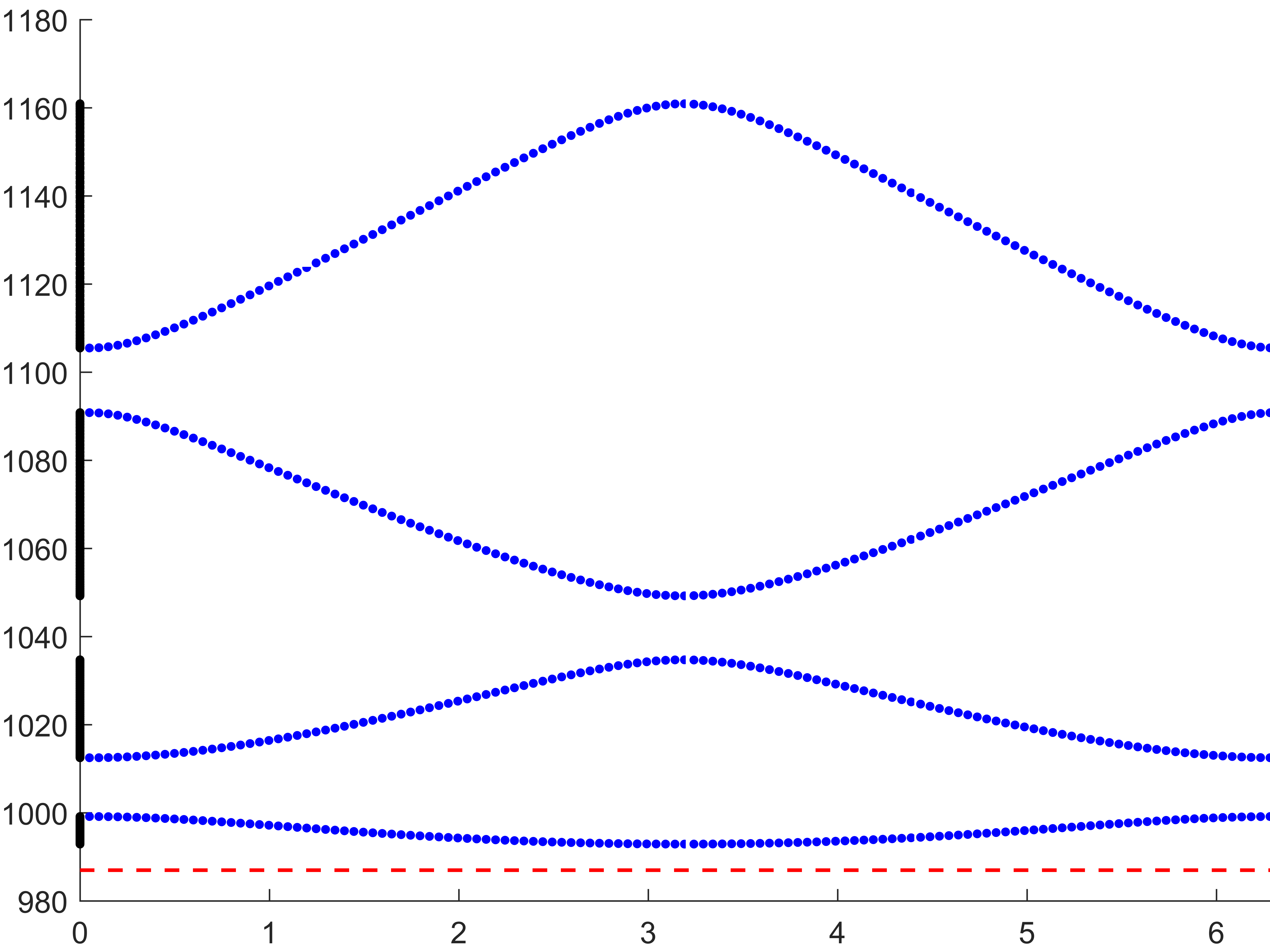}
\caption{Left: first six eigenvalues of (\ref{PbSpectralCell}) with respect to $\eta\in[0;2\pi]$. Right: zoom on the first eigenvalues larger than $\pi^2/\eps^2$. The red dashed lines indicate the value of the normalized threshold $\pi^2/\eps^2$. Here $H=2.5$ and $\eps=0.1$.}
\label{Bands_eps0pt1}
\end{figure}

\begin{figure}[!ht]
\centering
\includegraphics[width=0.46\textwidth]{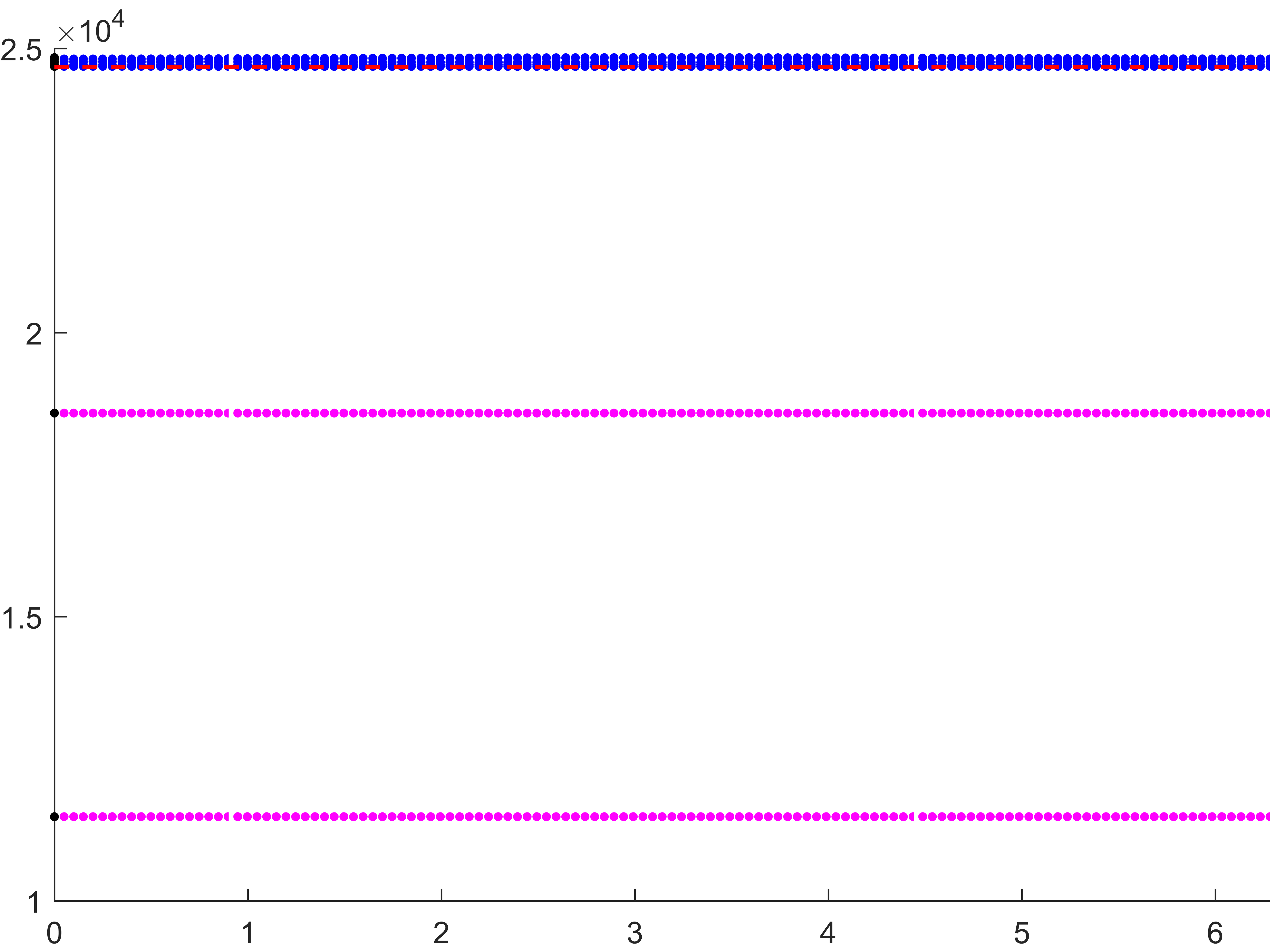}\quad\includegraphics[width=0.46\textwidth]{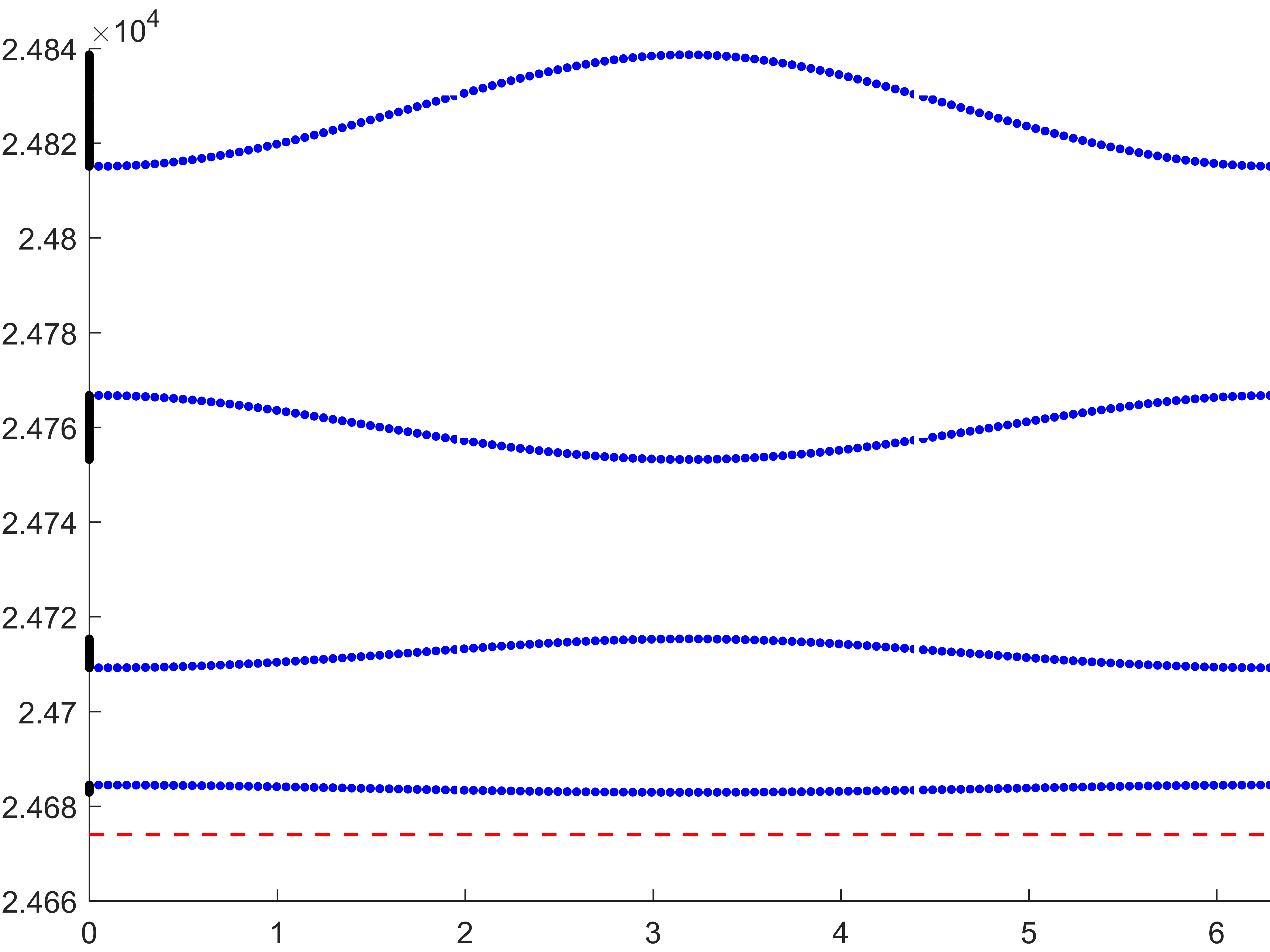}
\caption{Same quantities as in Figure \ref{Bands_eps0pt1} but with $\eps=0.02$.}
\label{Bands_eps0pt02}
\end{figure}

\noindent In Figure \ref{Bands_eps0pt02_H3}, we represent the same quantities as in Figure \ref{Bands_eps0pt02} but this time with $H=3.05$, i.e. with some $H$ slightly larger than one of the particular $H_\star$ appearing in (\ref{defHstar}) such that $\mathbb{S}^{H_\star}$ has an eigenvalue equal to $-1$. In agreement with the result of Theorem \ref{MainThmPerioMigration}, we note that the spectral bands above $\pi^2/\eps^2$ are larger than in the case $H=2.5$. Moreover, we observe that one thin spectral band (in magenta) has appeared below $\pi^2/\eps^2$. By further increasing $H$, this spectral band will dive below $\pi^2/\eps^2$, stops breathing and so dies to become extremely thin.

\begin{figure}[!ht]
\centering
\includegraphics[width=0.46\textwidth]{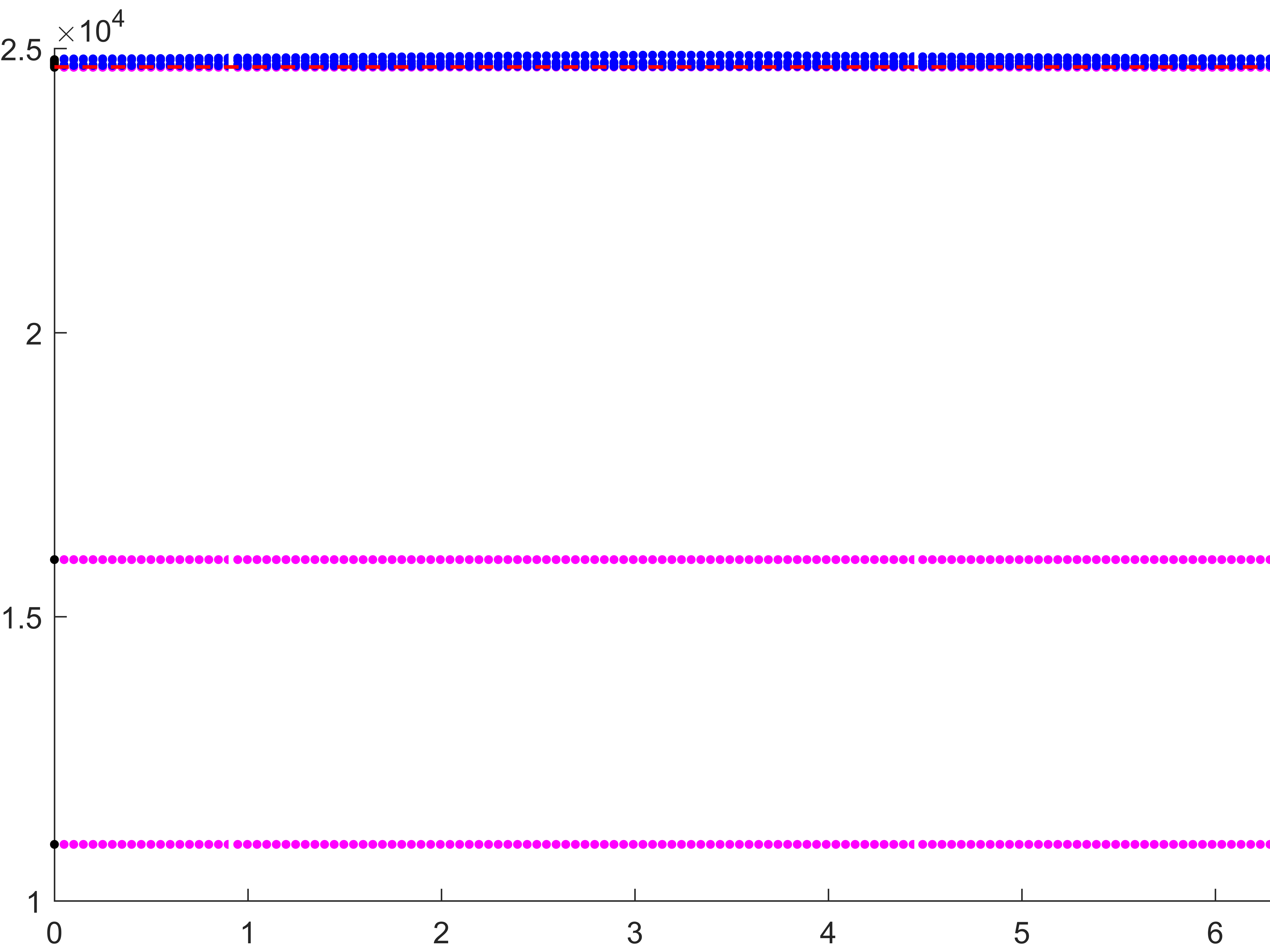}\quad\includegraphics[width=0.46\textwidth]{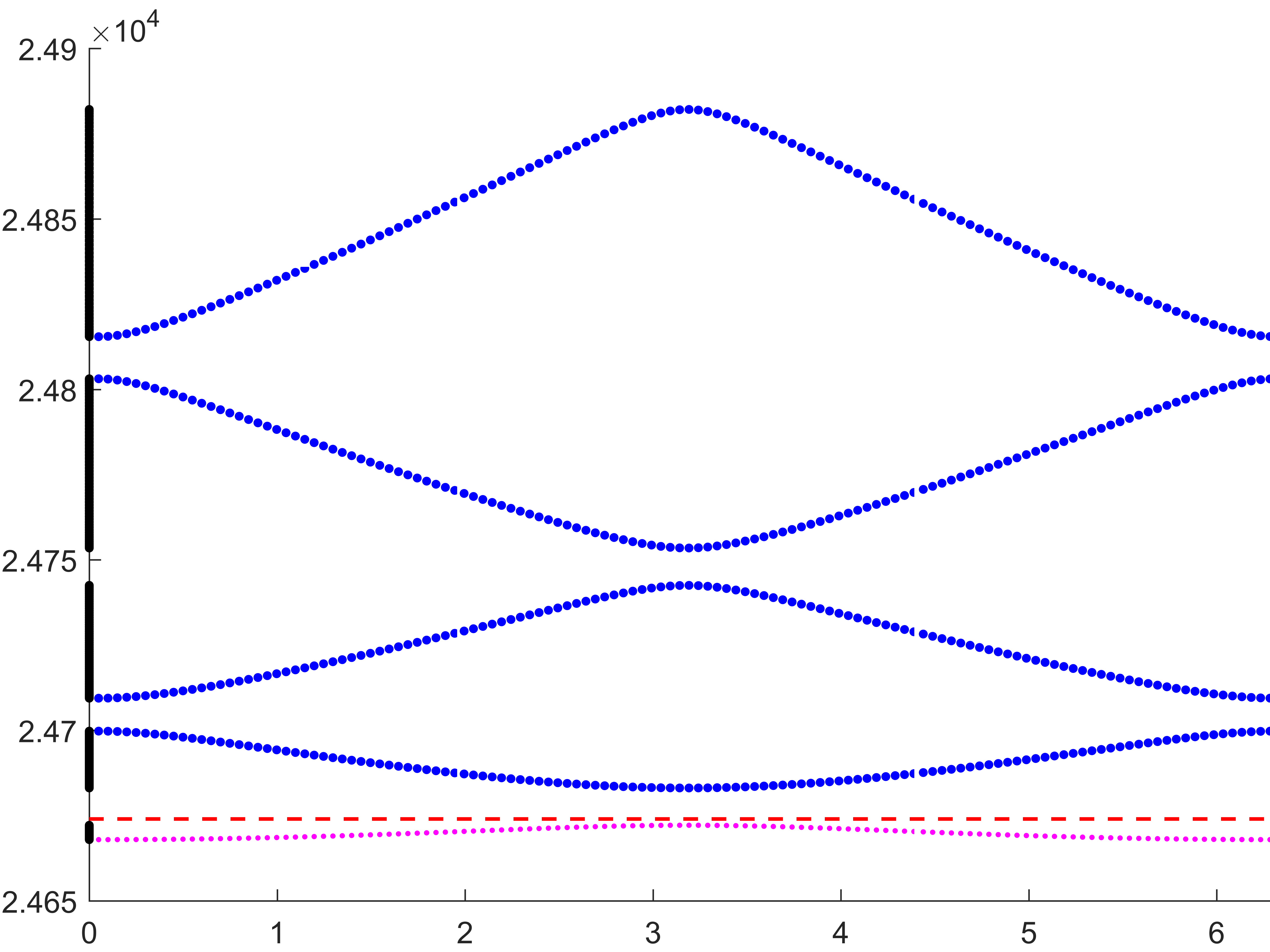}
\caption{Same quantities as in Figure \ref{Bands_eps0pt02} but with $H=3.05$.}
\label{Bands_eps0pt02_H3}
\end{figure}

\noindent Finally in Figures \ref{SpectrumH}, \ref{SpectrumHZoom}, we display the spectrum of $A^\eps$ with respect to $H\in[1.5;3.5]$. Practically, we select fifty values of $H\in[1.5;3.5]$ and for each of them, compute numerically the spectrum of (\ref{PbSpectralCell}) with respect to $\eta\in[0;2\pi]$. This gives us the spectral bands $\Upsilon^\eps_p$ introduced in (\ref{SpectralBands}) that we display on the vertical line corresponding to $H$. On these figures, the vertical dashed lines marks the $H_\star$ appearing in (\ref{defHstar}) such that $\mathbb{S}^{H_\star}$ has an eigenvalue equal to $-1$. In Figure \ref{SpectrumH}, we clearly see the extremely thin spectral bands (the ones in magenta) appearing at the $H_\star$. Figure \ref{SpectrumHZoom} on the other hand illustrates the phenomenon of breathing of the spectrum when inflating the near field geometry: when increasing $H$ around $H_\star$, the spectral bands above $\pi^2/\eps^2$ expand and then shrink. Note that the results we get are in agreement with the model obtained in Section \ref{SectionModel} (see in particular the numerics of Figure \ref{ImageAsymptoRho}). We observe some small differences between the results of Figures \ref{ImageAsymptoRho} and \ref{SpectrumHZoom} around the $H_\star$ however. We do not really know if they are due to the fact that $\eps$ is not small enough (numerically, it is difficult to work with tiny $\eps$ because computations become heavy) or are the consequences of numerical errors (we already work at relatively high frequency).

\begin{figure}[!ht]
\centering
\includegraphics[width=0.89\textwidth]{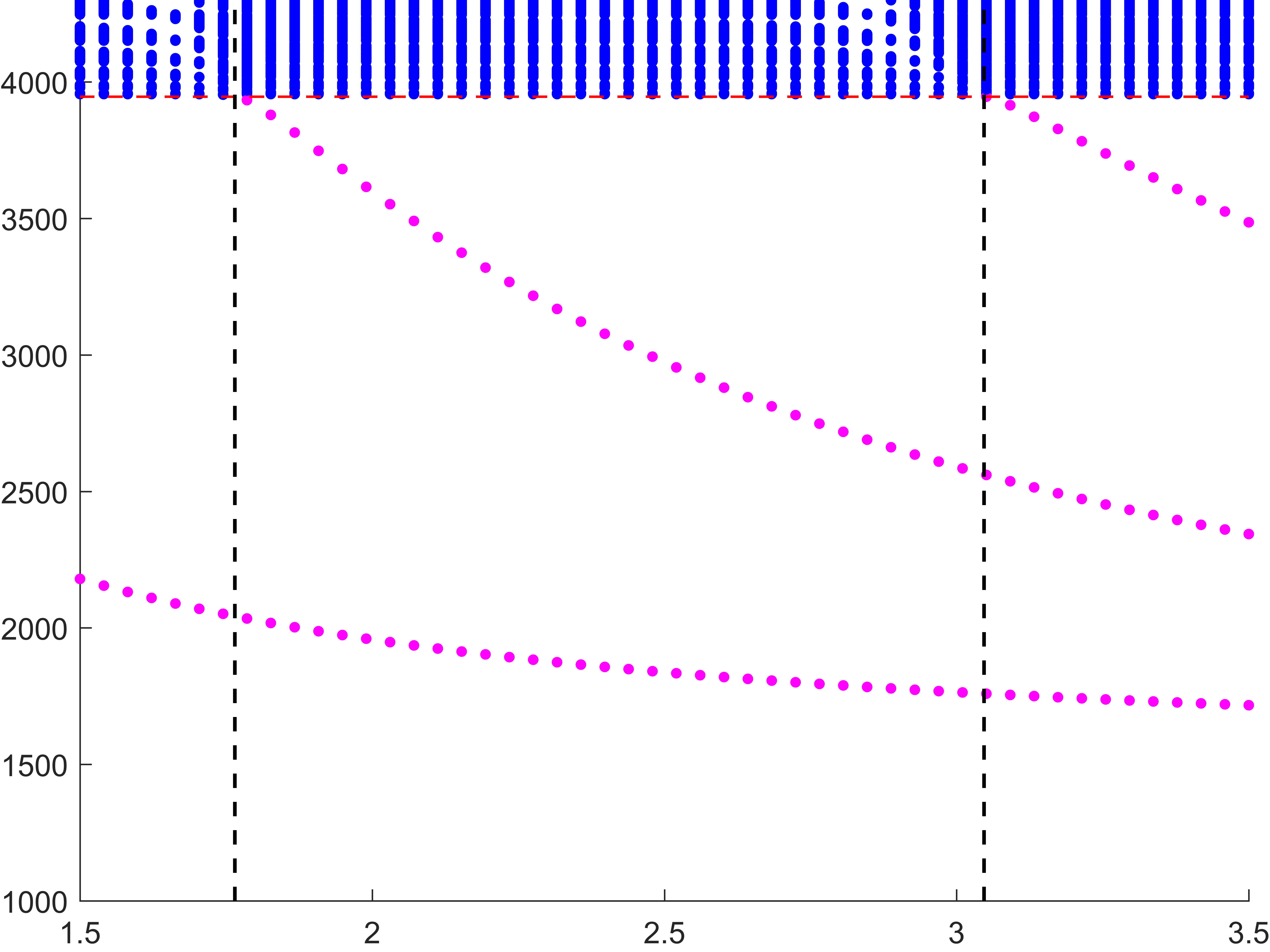}
\caption{Spectrum of $A^\eps$ with respect to $H\in[1.5;3.5]$. The horizontal red dashed line corresponds to the normalized threshold $\pi^2/\eps^2$. The vertical dashed lines marks the values of $H_\star$ appearing in (\ref{defHstar}) such that $\mathbb{S}^{H_\star}$ has an eigenvalue equal to $-1$.}
\label{SpectrumH}
\end{figure}

\begin{figure}[!ht]
\centering
\includegraphics[width=0.89\textwidth]{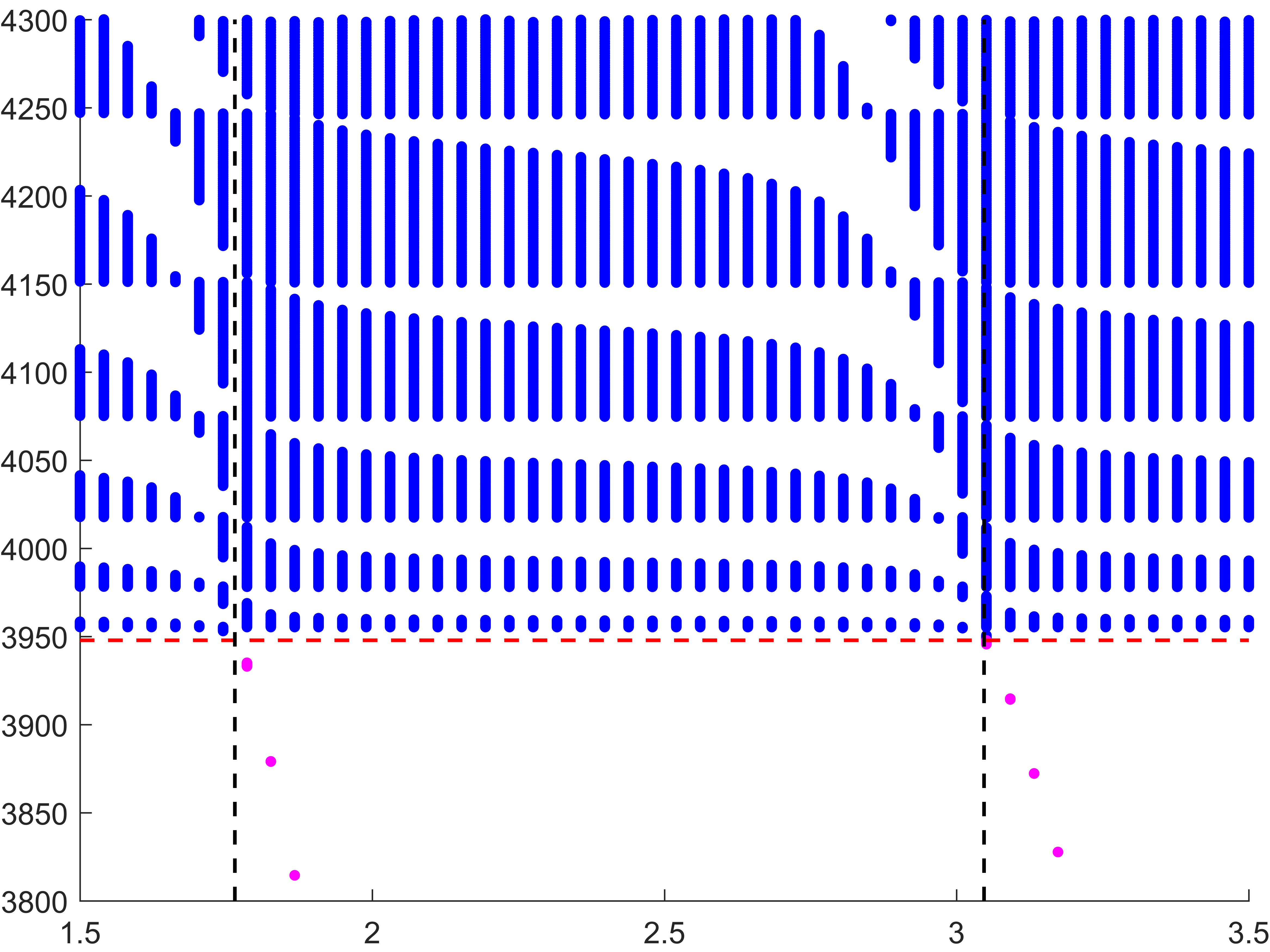}
\caption{Same quantities as in Figure \ref{SpectrumH} with a zoom on the spectral bands closed to $\pi^2/\eps^2$.}
\label{SpectrumHZoom}
\end{figure}

\clearpage
\newpage

\section*{Acknowledgements}
The work of the second author was supported by the Russian Science Foundation, project 22-11-00046.

\bibliography{Bibli}
\bibliographystyle{plain}
\end{document}